\def\wh{\widehat}
\def\wt{\widetilde}
\def\li{\lambda_1}
\def\lii{\lambda_2}
\def\lx{\lambda}
\def\m{{\mathfrak m}}
\def\p{{\mathfrak p}}
\def\lk{{\rm lk}}
\def\delet{\mathaccent"7017 }
\def\K{{L}}
\def\WL{{WL}}
\def\x{{l}}
\def\Z{{\mathbb Z}}
\def\R{{\mathbb R}}
\newtheorem{thm}{Theorem}[section]
\newtheorem{lem}[thm]{Lemma}
\newtheorem{clm}[thm]{Claim}
\newtheorem{cor}[thm]{Corollary}
\newtheorem*{thm*}{Theorem}
\theoremstyle{remark}
\newtheorem{exm}[thm]{Example}
\newtheorem{rem}[thm]{Remark}
\theoremstyle{definition}
\begin{document}

\begin{abstract}
Let $M_1$ and $M_2$ be closed connected orientable $3$-manifolds. We classify the sets of smooth and piecewise linear isotopy classes of embeddings $M_1\sqcup M_2\rightarrow S^6$.
\end{abstract}

\title{The classification of linked $3$-manifolds in $6$-space.}
\author{Sergey Avvakumov}
\thanks{I thank A. Skopenkov, M. Skopenkov, and U. Wagner for useful discussions. Supported in part by RFBR grant 15-01-06302.}
\maketitle

\setcounter{tocdepth}{1}
\tableofcontents

\bigskip
\section{Introduction.}

\smallskip
\subsection{Statement of the result.}
All maps and manifolds in the text are smooth\footnote{In this paper ``smooth'' means $C^1$-smooth. For each $C^\infty$-manifold $N$ the forgetful map from the set of $C^\infty$-isotopy classes of $C^\infty$-embeddings $N\rightarrow \R^m$ to the set of $C^1$-isotopy classes of $C^1$-embeddings $N\rightarrow \R^m$ is a $1$-$1$ correspondence, see \cite{Zh16}, c.f. \cite[footnote 2]{Sk15}.} unless specifically stated otherwise.

For a manifold $N$ denote by $E^m(N)$ the set of isotopy classes of embeddings $N\rightarrow S^m$. 
The main result of the paper is Theorem \ref{thm:main} giving a classification of $E^6(M_1\sqcup M_2)$ for arbitrary closed connected orientable $3$-manifolds $M_1$ and $M_2$. As a corollary we also get a piecewise linear (PL) classification, see Theorem \ref{thm:mainpl} in \S\ref{SS:PL}.

We start with the previously known classifications of $E^6(S^3\sqcup S^3)$ and $E^6(N)$, where $N$ is a closed connected orientable $3$-manifold. These results are later used in our proofs. In \S\ref{SS:survey} we also give a brief general survey on embeddings classification.

An embedding $g:S^3\rightarrow S^6$ is called {\it trivial} if it is isotopic to the standard embedding. The isotopy class of a trivial embedding is also called {\it trivial}. The embedded connected sum operation $\#$ (see \S\ref{SS:sum}) defines a group structure on $E^6(S^3)$. Operation $\#$ also defines an action of $E^6(S^3)$ on $E^6(N)$ for any closed connected orientable $3$-manifold $N$.

\begin{thm}[A. Haefliger]
\label{thm:Hk}
$E^6(S^3)\cong {\mathbb Z}$.
\end{thm}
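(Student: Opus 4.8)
The plan is to define a ``Haefliger invariant'' $\varkappa\colon E^6(S^3)\to\mathbb{Z}$, show it is additive under $\#$, nonzero, and injective; since a nonzero subgroup of $\mathbb{Z}$ is again $\mathbb{Z}$, this will give $E^6(S^3)\cong\mathbb{Z}$.

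\emph{Step 1: definition of $\varkappa$.} An embedding $f\colon S^3\to S^6$ has trivial normal bundle ($\pi_2(SO(3))=0$), so $f(S^3)$ is null-homologous and bounds a compact connected orientable Seifert surface $V^4\subset S^6$; after pushing its interior into $D^7$ (where $S^6=\partial D^7$) I regard $V$ as properly embedded with $\partial V=f(S^3)$. The rank-$3$ normal bundle $\nu$ of $V$ in $D^7$ is canonically trivialized over $\partial V$: the normal $2$-plane bundle of $V$ in $S^6$ is trivial over $S^3$ with a unique framing since $\pi_2(SO(2))=\pi_3(SO(2))=0$, and to it one adjoins the inward normals of $V\subset S^6$ and $S^6\subset D^7$. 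Set $\varkappa(f):=\langle p_1(\nu),[V,\partial V]\rangle\in\mathbb{Z}$, the relative Pontryagin number. For well-definedness, glue two Seifert surfaces $V,V'$ along $f(S^3)$ into a closed $4$-manifold $Z\subset S^7=D^7\cup_{S^6}D^7$; since $H_4(S^7)=0$ we have $Z=\partial W^5$ in $S^7$, so $\sigma(Z)=0$, and because $TZ\oplus\nu_Z$ is stably trivial the signature formula forces $\langle p_1(\nu_Z),[Z]\rangle=-3\sigma(Z)=0$, so $V$ and $V'$ give the same value (one must also check that the boundary framing conventions do not depend on $V$). Additivity of $p_1$ under boundary connected sum of Seifert surfaces makes $\varkappa$ a homomorphism for $\#$.

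\emph{Step 2: $\varkappa$ is nonzero.} I would realize Haefliger's knotted sphere by a construction analogous to the higher Borromean rings $B_1\sqcup B_2\sqcup B_3\subset S^6$ (pairwise unlinked but triple-wise linked, $3+3+3=2\cdot 6-3$), fusing two components through the third; equivalently, one directly builds a Seifert $4$-manifold in $D^7$ carrying nonzero relative $p_1$ and takes its boundary. This yields an embedding $f_0$ with $\varkappa(f_0)\neq 0$ (in fact $\pm1$ after the natural normalization, so that $\varkappa$ is onto). The knotting is inherently smooth: $S^3$ is PL-unknotted in $S^6$ by Zeeman's theorem, so $E^6_{\mathrm{PL}}(S^3)=0$, consistent with the PL classification announced later in the paper.

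\emph{Step 3: $\varkappa$ is injective (the main obstacle).} Suppose $\varkappa(f)=0$; I must show $f$ is isotopic to the standard embedding. Beginning from a Seifert surface $V\subset D^7$, I perform ambient surgery inside $D^7$: circles in $V$ bound embedded $2$-disks in $D^7$ with interiors off $V$ (general position, since $V$ has codimension $3$), so $\pi_1(V)$ and then $H_1(V)$ can be killed; next $H_2(V)$ is killed by surgery on embedded framed $2$-spheres, and precisely here $\varkappa(f)=0$ enters --- to annihilate the middle-dimensional Pontryagin/quadratic surgery obstruction and to supply the required bounding $3$-disks in $D^7$. One reduces $V$ to a properly embedded homotopy $4$-disk, whose complement is then a homology $S^2$, and deduces standardness of $f$ from PL unknottedness together with the $s$-cobordism theorem applied in the complement, which has dimension $\ge5$ so that no $4$-dimensional difficulties intervene. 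The hard part is exactly this: executing the ambient surgeries --- especially the middle-dimensional ones, whose bounding $3$-disks sit in a boundary-case general-position range --- matching the residual obstruction with $\varkappa(f)$, and passing from a simplified Seifert surface back to an actual isotopy of $f$. Granting this, $\varkappa$ embeds $E^6(S^3)$ in $\mathbb{Z}$ with nonzero image, so $E^6(S^3)\cong\mathbb{Z}$.
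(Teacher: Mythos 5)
The paper does not prove this statement at all---it is quoted as Haefliger's classical theorem---so your proposal has to stand on its own. The overall architecture (Seifert surface pushed into $D^7$, a characteristic-number invariant, realization via the Borromean construction, injectivity by ambient surgery) is indeed the standard route, but Step 1 contains a genuine error: $\langle p_1(\nu),[V,\partial V]\rangle$ alone is \emph{not} independent of the Seifert surface $V$. Your well-definedness argument rests on the chain ``$[Z]=0$ in $H_4(S^7)$, hence $Z=\partial W^5$ embedded in $S^7$, hence $\sigma(Z)=0$''; the first implication is unjustified (null-homologous submanifolds need not bound embedded ones) and the conclusion is false. Concretely, by Bo\'echat--Haefliger the $K3$ surface embeds in a small ball in the interior of $D^7$ (one needs a class $u\equiv w_2=0$ with $u\cdot u=3\sigma=-48$, which the $K3$ lattice admits), and tubing $V$ to such a copy yields a second Seifert surface $V'$ for the same $f$ with $\sigma(V')=\sigma(V)-16$ and, by your own correct identity $\langle p_1(\nu_Z),[Z]\rangle=-3\sigma(Z)$, with the $p_1$-number shifted by $48$. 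What your gluing computation actually establishes is that $3\sigma(V)+\langle \bar p_1(\nu),[V,\partial V]\rangle$ is independent of $V$; it is this combination (suitably normalized, as in Takase's geometric formula, roughly $\tfrac18\bigl(\sigma(V)-\tfrac13\bar p_1\bigr)$ up to sign conventions) that defines the Haefliger invariant, not the Pontryagin term alone. The fix is routine but mandatory, since every subsequent step uses $\varkappa$.

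Separately, Step 3---injectivity---is the actual content of Haefliger's theorem, and you acknowledge without executing it: the middle-dimensional ambient surgeries (embedded framed $3$-disks in $D^7$ bounding the surgery spheres, disjoint from $V$, in the borderline general-position range), the identification of the residual surgery obstruction with the invariant, and the passage from ``$f(S^3)$ bounds a properly embedded $4$-disk in $D^7$'' back to ``$f$ is standard'' (a slice disk in $D^7$ is weaker than a disk in $S^6$; one must invoke unknotting of disk pairs in codimension $\ge 3$) are precisely where the work lies. So as written the proposal is incorrect in Step 1 and incomplete at its central step.
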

Let
$$r:E^6(S^3)\rightarrow {\mathbb Z}$$
be one (of the two) isomorphisms $E^6(S^3)\rightarrow {\mathbb Z}$. We call the chosen isomorphism $r$ the {\it Haefliger invariant}\footnote{For arbitrary closed connected orientable $3$-manifold $N$ there is a generalized version $E^6(N)\rightarrow {\mathbb Z}$ of this invariant due to M. Kreck.}.
\begin{rem}
\label{rem:Hk}
The zero of the group $E^6(S^3)$ is the trivial class. I.e., Theorem \ref{thm:Hk} implies that $r(g)=0$ if and only if $g:S^3\rightarrow S^6$ is trivial.
\end{rem}

All the homology groups in the text are with coefficients in $\mathbb Z$ unless another group is explicitly specified. For any closed connected orientable $3$-manifold $N$ the {\it Whitney invariant} $$W:E^6(N)\rightarrow H_1(N)$$ is defined in \cite{Sk08}. We give an equivalent definition in \S\ref{SS:whitney}.

For an element $a\neq 0$ of a free abelian group $G$ denote by ${\rm div}(a)$ the {\it divisibility} of $a$. I.e., ${\rm div}(a)$ is the maximal positive integer such that $a={\rm div}(a)b$ for some $b\in G$. Put ${\rm div}(0)=0$. For an element $a$ of an abelian group $G$ denote by ${\rm div}(a)$ the divisibility of the projection of $a$ to the free part of $G$.

\begin{thm}[A. Skopenkov, others]\footnote{Part (III) of the Theorem is due to A. Skopenkov, see \cite{Sk08}. Parts (I) and (II) were known earlier, see \cite[Footnote 3]{Sk08}.}
\label{thm:Sk}
For any closed connected orientable $3$-manifold $N$
\begin{itemize}
\item [{\bf (I)}] the Whitney invariant $$W:E^6(N)\rightarrow H_1(N)$$ is surjective.
\item [{\bf (II)}] The embedded connected sum action of $E^6(S^3)$ is transitive on each of the preimages of $W$.
\item [{\bf (III)}] For any $[f]\in E^6(N)$ and $[g]\in E^6(S^3)$ we have that $[f]\#[g]=[f]$ if and only if the Haefliger invariant $r(g)$ is a multiple of the divisibility of the Whitney invariant $W(f)$, i.e., $r(g)=k{\rm div}(W(f))$ for some integer $k$.
\end{itemize}
\end{thm}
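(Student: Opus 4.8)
The three parts together say that $E^6(N)$, as a set equipped with the action of $E^6(S^3)\cong\mathbb{Z}$ from Theorem~\ref{thm:Hk}, is the disjoint union $\bigsqcup_{a\in H_1(N)}\mathbb{Z}/{\rm div}(a)\,\mathbb{Z}$, with $W$ the orbit map. My plan is to fix a reference embedding $f_0\colon N\to S^6$ with $W(f_0)=0$ (as provided by the definition of $W$ in \S\ref{SS:whitney}) and then to prove separately that $W$ is onto (I), that the fibres of $W$ are single orbits (II), and that the stabilizer of $[f]$ equals ${\rm div}(W(f))\,\mathbb{Z}$ (III). Throughout I would use that every closed orientable $3$-manifold is parallelizable, so that $TN$ is trivial and, by a Stiefel--Whitney class computation, so is the normal bundle of any embedding $N\to S^6$.

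For (I): given $a\in H_1(N)$, represent it by a smooth embedded circle $\gamma\subset N$; a tubular neighborhood of $f_0(\gamma)$ in $S^6$ can then be identified standardly with $S^1\times D^5$, compatibly with the tube $S^1\times D^2$ of $\gamma$ in $N$. Inside this chart I would replace $f_0$ by the ``twisted'' embedding determined by a generator of the relevant group of local modifications supported near $\gamma$ and fixed near the boundary — a copy of $\mathbb{Z}$ that the Whitney invariant is designed to detect — and check, against the definition of $W$, that this adds exactly $[\gamma]=a$ to the Whitney invariant. As $\gamma$ runs over representatives of all of $H_1(N)$ this gives surjectivity; this is the classical ``modification along a framed circle''.

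For (II): suppose $W(f_0)=W(f_1)$. Any two maps $N\to S^6$ are homotopic, so I would choose a generic homotopy; its trace is an immersion of the $4$-manifold $N\times I$ into the simply connected $7$-manifold $S^6\times I$ with $1$-dimensional self-intersection set, and the definition of $W$ is arranged so that $W(f_1)-W(f_0)$ is the class of this $1$-cycle in $H_1(N)$ — here zero. The Whitney trick is then available (the complement of the trace is simply connected in this codimension), so the homologically cancellable self-intersections can be removed in pairs, leaving an embedded concordance locally standard outside a single ball, inside which some Haefliger knot $g\in E^6(S^3)$ may sit. Passing from the concordance to an isotopy (routine in this codimension) gives $[f_1]=[f_0]\#[g]$, i.e. transitivity on each fibre of $W$.

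Part (III) is where the real work lies, and the step I expect to be the main obstacle. Write $d={\rm div}(W(f))$; by the definition of divisibility the pairing $\langle W(f),-\rangle\colon H^1(N)\to\mathbb{Z}$ has image exactly $d\,\mathbb{Z}$, and the point is to match this subgroup against the action of $E^6(S^3)$. For the implication $r(g)\in d\,\mathbb{Z}\Rightarrow[f]\#[g]=[f]$ I would argue constructively: performing $[f]\#[g]$ in a small ball meeting $f(N)$ in a $3$-disk, one slides that ball through $N$ along a $1$-cycle and back, an ambient isotopy changing the local knot by the pairing of $W(f)$ against the class swept out; hence any $g$ with $r(g)\in d\,\mathbb{Z}$ is absorbed. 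For the converse I would produce an obstruction: a $\mathbb{Z}$-valued invariant of $[f]$ built from the linking number of $f(N)$ with a pushoff along its trivial normal bundle — in the spirit of Haefliger's $r$ and of Kreck's generalized invariant — which increases by $r(g)$ under $\#$ with $g$ but whose indeterminacy, coming from the choice of normal framing, is exactly $d\,\mathbb{Z}$; additivity then forces $r(g)\equiv 0\pmod{d}$ whenever $[f]\#[g]=[f]$. The crux is showing that both the ``absorbable'' subgroup and the ``framing indeterminacy'' subgroup are exactly $d\,\mathbb{Z}$, rather than $2d\,\mathbb{Z}$ or something coarser, which requires a careful analysis of how a localized Haefliger knot interacts with the normal bundle of $f(N)$ and with the Whitney invariant; Parts (I) and (II), by comparison, are standard general-position and Whitney-trick arguments.
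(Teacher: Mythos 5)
Theorem~\ref{thm:Sk} is not proved in this paper: it is imported from the literature (part~(III) from \cite{Sk08}, parts~(I) and~(II) from earlier work cited there) and used as a black box, so there is no in-paper argument to measure you against --- only the known proofs. Your overall plan (realizing a class of $H_1(N)$ by a modification along an embedded circle for~(I); cancelling the self-intersections of the trace of a homotopy and collecting the residue into a local Haefliger knot for~(II); a Kreck-type $\mathbb{Z}$-valued invariant with indeterminacy ${\rm div}(W(f))\,\mathbb{Z}$ for~(III)) is the standard one, and for~(I) it essentially coincides with the construction the paper does carry out in its own two-component setting: Claim~\ref{clm:wsurj} produces an unknotted $S^3$ bounding a $4$-disk that meets $f(N)$ in a circle representing $a$, and piping $f$ to that sphere and contracting it along the disk exhibits the change of $W$ as exactly $a$. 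Your ``twisting in a chart $S^1\times D^5$'' is vaguer than this but is repairable along the same lines.

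The genuine gaps are in~(II) and~(III). In~(II), ``the Whitney trick is then available'' is the crux, not a routine step: the trace $N\times I$ is a $4$-manifold in the $7$-manifold $S^6\times I$, so its generic self-intersection is a $1$-manifold, not a finite set of signed points, and the dimensions lie outside the metastable range $2m\ge 3n+4$ (here $12<13$) in which Haefliger-style double-point elimination is automatic. Removing a double-point circle requires a Whitney disk together with normal framing data, and the obstructions to choosing these are precisely where the Whitney invariant and the residual element of $E^6(S^3)$ live; asserting that ``homologically cancellable self-intersections can be removed in pairs'' skips the entire content of the step. In~(III) you have correctly located the difficulty --- that both the ``absorbable'' subgroup and the ``framing indeterminacy'' subgroup equal exactly ${\rm div}(W(f))\,\mathbb{Z}$ rather than some coarser subgroup --- but you leave it unresolved, so part~(III) as written is a plan rather than a proof; in \cite{Sk08} this is exactly the part that requires constructing the Kreck invariant via modified surgery and computing its indeterminacy. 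If your intention is to reprove Theorem~\ref{thm:Sk} rather than cite it, these two points are where the real work lies.
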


\begin{cor}
\label{cor:Sk}
Suppose that $H_1(N)$ is infinite. Then there is an element $[f]\in E^6(N)$ and a non-trivial element $[g]\in E^6(S^3)$ such that $[f]\#[g]=[f]$.
\end{cor}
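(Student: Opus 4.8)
The plan is to derive the statement directly from Theorems \ref{thm:Hk} and \ref{thm:Sk} together with Remark \ref{rem:Hk}; there is essentially no auxiliary construction needed. First I would exploit the hypothesis that $H_1(N)$ is infinite: since a finitely generated abelian group is infinite precisely when its free part has positive rank, I can choose an element $a\in H_1(N)$ whose projection to the free part of $H_1(N)$ is nonzero. By the convention fixed before Theorem \ref{thm:Sk}, this means $d:={\rm div}(a)$ is a positive integer.

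Next I would invoke Theorem \ref{thm:Sk}(I): the Whitney invariant $W\colon E^6(N)\rightarrow H_1(N)$ is surjective, so there exists $[f]\in E^6(N)$ with $W(f)=a$ and hence ${\rm div}(W(f))=d>0$. Then, using Theorem \ref{thm:Hk} (the isomorphism $r\colon E^6(S^3)\rightarrow{\mathbb Z}$), I would pick $[g]\in E^6(S^3)$ with $r(g)=d$. Because $d\neq 0$, Remark \ref{rem:Hk} guarantees that $[g]$ is non-trivial, which gives the required non-triviality.

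Finally I would apply Theorem \ref{thm:Sk}(III) to this pair: we have $r(g)=d=1\cdot{\rm div}(W(f))$, i.e., $r(g)$ is the multiple of ${\rm div}(W(f))$ with $k=1$, so the criterion in (III) yields $[f]\#[g]=[f]$. Combining the last two paragraphs produces an $[f]\in E^6(N)$ and a non-trivial $[g]\in E^6(S^3)$ with $[f]\#[g]=[f]$, as claimed.

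I do not expect any genuine obstacle here, since the corollary is an immediate specialization of Theorem \ref{thm:Sk}. The only points requiring a little care are bookkeeping: making sure that "infinite $H_1(N)$" is used exactly to force ${\rm div}(W(f))\neq 0$ (so that the divisor is a positive integer and the "multiple" in (III) can be taken nonzero), and correctly handling the convention that ${\rm div}$ of an element of a group with torsion refers to the divisibility of its projection to the free part.
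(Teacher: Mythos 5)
Your proof is correct and is exactly the intended derivation: the paper leaves this corollary unproved as an immediate consequence of Theorem \ref{thm:Sk}, and your argument (use infiniteness of $H_1(N)$ to get $a$ with ${\rm div}(a)=d>0$, realize $a$ as $W(f)$ by surjectivity, take $r(g)=d\neq 0$, and apply part (III) with $k=1$) is precisely the natural route, with the one genuinely necessary point --- that ${\rm div}(W(f))\neq 0$ is what infiniteness buys you --- handled correctly.
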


An embedding $g:S^3\sqcup S^3\rightarrow S^6$ is called {\it unlinked} if its components lie in pairwise disjoint balls. An unlinked embedding $g:S^3\sqcup S^3\rightarrow S^6$ is called {\it trivial} if its restriction to each component is trivial. The isotopy class of a trivial (resp. unlinked) embedding is also called {\it trivial} (resp. {\it unlinked}). An unlinked embedding differs from a trivial embedding only by the ``knotting'' of the components. The component-wise embedded connected sum operation $\#$ (see \S\ref{SS:sum}) defines a group structure on $E^6(S^3\sqcup S^3)$ and an action of $E^6(S^3\sqcup S^3)$ on $E^6(M_1\sqcup M_2)$ for arbitrary closed connected orientable $3$-manifolds $M_1$ and $M_2$.

For $k\in\{1,2\}$ let
$$r_k:E^6(S^3\sqcup S^3)\rightarrow \Z$$
be the Haefliger invariant of the restriction to the $k$-th connected component.
The (defined later in \S\ref{SS:lambda}) isotopy invariants 
$$\li,\lii:E^6(S^3\sqcup S^3)\rightarrow{\mathbb Z}$$
are called the (generalized) {\it linking coefficients}.

Denote $$\wt{\Z^4}:=\{(a,b)\in {\mathbb Z}^2|a\equiv b\pmod{2}\}\times {\mathbb Z}^2\subset \Z^4.$$

\begin{thm}[A. Haefliger, \cite{Ha62}]
\label{thm:H}
The map $\li\times\lii\times r_1\times r_2:E^6(S^3\sqcup S^3)\rightarrow \Z^4$ is a monomorphism and its image is $\wt{\Z^4}$.
\end{thm}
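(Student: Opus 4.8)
The plan is to show that $\li\times\lii\times r_1\times r_2$ is an injective group homomorphism whose image is exactly $\wt{\Z^4}$; this identifies $E^6(S^3\sqcup S^3)$ with $\wt{\Z^4}\cong\Z^4$. That the map is a homomorphism for the component-wise $\#$-structure is routine: each of the four invariants is defined homotopy-theoretically and $\#$ corresponds to addition of the relevant classes, exactly as for the knotting invariant $r$ in Theorem \ref{thm:Hk}.

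\emph{Injectivity.} I would prove that $\li(f)=\lii(f)=r_1(f)=r_2(f)=0$ forces $f$ to be trivial. The heart of the matter is the implication ``$\li(f)=\lii(f)=0\Rightarrow f$ is unlinked''. Let $C$ be the complement in $S^6$ of an open tubular neighbourhood of $f_1(S^3)$. Since $f_1(S^3)$ has codimension $3$, $C$ is simply connected, and Alexander duality gives $\widetilde H_*(C)\cong\widetilde H_*(S^2)$, so by Hurewicz and Whitehead $C\simeq S^2$; the induced isomorphism $\pi_3(C)\cong\pi_3(S^2)\cong\Z$ carries the class of $f_2\colon S^3\to C$ to $\li(f)$, so $\li(f)=0$ means $f_2$ is null-homotopic in $C$. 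One then invokes Haefliger's unlinking theorem (\cite{Ha62}): a two-component link of $3$-spheres in $S^6$ with both linking coefficients zero is unlinked. I stress this is \emph{not} a soft argument --- ``null-homotopic $\Rightarrow$ isotopic into a ball'' fails for $S^3$ in a $6$-manifold (Haefliger knots are null-homotopic in $S^6$) --- so the conclusion rests on Haefliger's metastable codimension-$3$ surgery machinery, in which the singularities of a null-homotopy of $f_2$ are removed inductively, the vanishing of $\lii(f)$ being what makes this possible. Granting this, $f$ is unlinked, hence isotopic to a ``split union'' of its two components placed in disjoint balls, hence determined up to isotopy by the pair of knot types of $f_1,f_2$; since $r_1(f)=r_2(f)=0$, Theorem \ref{thm:Hk} and Remark \ref{rem:Hk} make both components trivial, so $f$ is trivial.

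\emph{The image.} Two things must be checked: (a) $\li(f)\equiv\lii(f)\pmod2$ always, so the image lies in $\wt{\Z^4}$; and (b) every element of $\wt{\Z^4}$ occurs. For (a) I would take a Seifert $4$-manifold $V\subset S^6$ with $\partial V=f_1(S^3)$ in general position with $f_2(S^3)$ and compute both $\li(f)$ and $\lii(f)$ from the framed $1$-manifold $V\cap f_2(S^3)$ (respectively the complementary picture for $f_2$), obtaining $\li(f)-\lii(f)=2e$ for a normal-framing/self-intersection integer $e$; equivalently, $\li(f)\bmod 2=\lii(f)\bmod 2$ is the image of the linking class under the suspension $\pi_3(S^2)\to\pi_4(S^3)\cong\Z/2$, which is symmetric in the two components. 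For (b), note that $\wt{\Z^4}$ has basis $(0,0,1,0)$, $(0,0,0,1)$, $(1,1,0,0)$, $(2,0,0,0)$. The first two are realised by unlinked embeddings one component of which is a Haefliger knot with the prescribed value of $r$ (Theorem \ref{thm:Hk}). For $(1,1,0,0)$ I would use the explicit ``Hopf link'': in the decomposition $S^6=(S^3\times D^3)\cup_{S^3\times S^2}(D^4\times S^2)$ take $f_1$ to be the standard $S^3\times\{0\}$ and $f_2\colon x\mapsto(x,h(x))\in S^3\times S^2\subset S^6$ with $h$ the Hopf map; the complement of $f_1$ retracts onto the $S^2$ factor and $f_2$ composed with this retraction is $h$, whence $\li=\pm1$, and by (a) together with $\li=\pm1$ also $\lii=\pm1$, so after orientation choices one gets $(1,1,0,0)$. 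The remaining basis vector $(2,0,0,0)$ --- which cannot be obtained from the Hopf link by orientation reversal and records the genuine asymmetry of the linking coefficients --- is realised by a further explicit construction (a Hopf-type construction applied to only one component), its two linking coefficients being computed directly. Taking $\#$-sums of these links and knots then hits all of $\wt{\Z^4}$.

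\emph{Expected main obstacle.} The decisive step is the unlinking implication inside the injectivity argument: passing from the purely homotopy-theoretic statement $\li(f)=\lii(f)=0$ to an actual isotopy splitting the link. As noted, the ``homotopy classifies embeddings'' principle fails in this borderline range precisely because of the Haefliger knots, so one cannot avoid Haefliger's intricate codimension-$3$ surgery arguments here. A secondary, bookkeeping-type difficulty is controlling orientation and sign conventions well enough to pin down the \emph{exact} congruence $\li\equiv\lii\pmod2$ and to verify that the explicit links above realise a genuine basis of $\wt{\Z^4}$ rather than only a finite-index subgroup.
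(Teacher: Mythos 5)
This is a theorem the paper does not prove: it is cited from Haefliger, and the survey in \S\ref{SS:survey} indicates the intended derivation, namely combine (i) the PL classification $\li\times\lii:E^6_{PL}(S^3\sqcup S^3)\to\{(a,b):a\equiv b\pmod 2\}$ (Theorem \ref{thm:Hpl}, the $k=2$ case of Haefliger's $\Sigma a=\Sigma b$ theorem), with (ii) the splitting $E^m(S^n_{(k)})\cong E^m_{PL}(S^n_{(k)})\oplus\bigoplus_i E^m(S^n)$, and (iii) $E^6(S^3)\cong\Z$. That route packages the entire smooth/PL discrepancy into the two knotting invariants $r_1,r_2$ and makes both injectivity and the computation of the image immediate. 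Your route instead works directly in the smooth category, and at its decisive point -- the implication $\li=\lii=0\Rightarrow$ unlinked -- you appeal to ``Haefliger's unlinking theorem'' as a black box. That implication \emph{is} the hard content of the statement being proved (it is exactly the injectivity of the PL invariant together with the splitting), so as written your argument does not prove the theorem so much as reduce it to an equivalent citation. If citation of Haefliger is allowed, the reduction in (i)--(iii) is both shorter and cleaner; if it is not, your sketch is missing its main step.

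Two further concrete gaps. First, the image computation: you realise $(0,0,1,0)$, $(0,0,0,1)$ and (modulo the next remark) $(1,1,0,0)$, but the fourth generator $(2,0,0,0)$ is only asserted to come from ``a further explicit construction''; without it you only obtain a proper subgroup of $\wt{\Z^4}$, and this generator is precisely the one that witnesses the asymmetry $\li\neq\lii$, so it cannot be waved away. (The paper's own Remark after Lemma \ref{lem:Lass} points to the relevant example: the Whitehead link $A\sqcup B\#C$ built from Borromean rings has $\lx(A,B\#C)=2$ while the components are individually unknotted.) Second, your deduction that the Hopf link has $\lii=\pm1$ ``by (a) together with $\li=\pm1$'' is a non sequitur: the congruence only gives that $\lii$ is odd. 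This is harmless for generating $\wt{\Z^4}$ once $(2,0,0,0)$ is in hand, but as stated it is not a computation of the Hopf link's invariants. A smaller point worth flagging: calling additivity of $\li,\lii$ under $\#$ ``routine'' is optimistic, since $\lx$ is \emph{not} bi-additive (again the Whitehead link, see the Remark after Lemma \ref{lem:Lass}); additivity under the group operation holds only because the summand link sits in a ball disjoint from the first link, and that hypothesis should be invoked explicitly.
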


\begin{rem}
\label{rem:Hk}
The zero of the group $E^6(S^3\sqcup S^3)$ is the trivial class. I.e., Theorem \ref{thm:H} implies that $r_1(g)=r_2(g)=\li(g)=\lii(g)=0$ if and only if $g:S^3\sqcup S^3\rightarrow S^6$ is trivial. Also, $\li(g)=\lii(g)=0$ if and only if $g:S^3\sqcup S^3\rightarrow S^6$ is unlinked; the ``if'' part follows from the definitions of $\li$ and $\lii$, and the ``only if'' part follows from the PL version of Theorem \ref{thm:H}, see Theorem \ref{thm:Hpl}.
\end{rem}

We use Theorems \ref{thm:Hk}, \ref{thm:Sk}, \ref{thm:H} to prove Theorem \ref{thm:main} which is the main result of the paper. 
First we present two corollaries of Theorem \ref{thm:main} showing that the connection between Theorems \ref{thm:Hk}, \ref{thm:H}, \ref{thm:Sk} on one hand and Theorem \ref{thm:main} on the other hand is not trivial. The corollaries are proved at the end of this subsection.

For the rest of the text let $M_1$ and $M_2$ be some closed connected orientable $3$-manifolds.

\begin{cor}
\label{cor:1}
Suppose that $H_1(M_1)$ is infinite. Then there is an element $[f]\in E^6(M_1\sqcup M_2)$ and a non-trivial not unlinked element $[g]\in E^6(S^3\sqcup S^3)$ such that $[f]\#[g]=[f]$.
\end{cor}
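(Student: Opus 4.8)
The plan is to reduce the statement to Corollary \ref{cor:Sk} by embedding $M_1$ "far away" from $M_2$ and then knotting only the $M_1$-component by a suitable element of $E^6(S^3)$ coming from $E^6(S^3\sqcup S^3)$. More precisely, since $H_1(M_1)$ is infinite, Corollary \ref{cor:Sk} gives an embedding $[f_1]\in E^6(M_1)$ and a non-trivial $[g_1]\in E^6(S^3)$ with $[f_1]\#[g_1]=[f_1]$; in fact one can take $[f_1]$ with $W(f_1)$ of divisibility $d>1$ (actually $d=0$ after choosing $W(f_1)$ of infinite order) and then $[g_1]=d\cdot(\text{generator})$, or, when the free rank forces $\mathrm{div}(W(f_1))=0$, \emph{any} non-trivial $[g_1]$ works by Theorem \ref{thm:Sk}(III). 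Pick also an arbitrary embedding $[f_2]\in E^6(M_2)$, realize the two embeddings in disjoint balls in $S^6$ to obtain $[f]\in E^6(M_1\sqcup M_2)$, and observe that the connected-sum action on $E^6(M_1\sqcup M_2)$ restricted to elements of $E^6(S^3\sqcup S^3)$ that are unlinked acts component-wise as the $E^6(S^3)$-actions of Theorem \ref{thm:Sk}.

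First I would make precise the compatibility between the $E^6(S^3\sqcup S^3)$-action on $E^6(M_1\sqcup M_2)$ and the $E^6(S^3)$-action on each $E^6(M_i)$: for an unlinked $[g]\in E^6(S^3\sqcup S^3)$ with components $[g_1],[g_2]\in E^6(S^3)$ we have $[f]\#[g]=([f_1]\#[g_1])\sqcup([f_2]\#[g_2])$ when $[f]=[f_1]\sqcup[f_2]$ is itself unlinked. This follows directly from the definition of the component-wise connected sum in \S\ref{SS:sum}. Next, by Theorem \ref{thm:H} (more precisely Remark \ref{rem:Hk} together with surjectivity of $r_1\times r_2$ onto the last two $\Z$ factors of $\wt{\Z^4}$), there is an \emph{unlinked} element $[g]\in E^6(S^3\sqcup S^3)$ with $\li(g)=\lii(g)=0$, $r_1(g)=r(g_1)$ equal to the prescribed non-trivial value, and $r_2(g)=0$. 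Such a $[g]$ is non-trivial (since $r_1(g)\neq 0$) but unlinked, so it is "not unlinked" — wait, the corollary asks for a \emph{not unlinked} $[g]$, so this first attempt produces the wrong kind of element and must be corrected.

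The real content, and the main obstacle, is therefore to upgrade the unlinked $[g]$ above to a genuinely \emph{linked} one — i.e. with $(\li(g),\lii(g))\neq(0,0)$ — that still satisfies $[f]\#[g]=[f]$. Here is where Theorem \ref{thm:main} itself (whose statement the excerpt stops just before) must be invoked: one needs that the full stabilizer of $[f]$ in $E^6(S^3\sqcup S^3)$ is strictly larger than the product of the two one-component stabilizers, and in particular contains elements with nonzero linking coefficients whenever $\mathrm{div}(W(f_1))\neq 1$. Concretely, I expect the mechanism to be: when $W(f_1)$ has infinite order, the "linking" part of the action of $E^6(S^3\sqcup S^3)$ on $E^6(M_1\sqcup M_2)$ is itself not faithful, because linking a sphere around $M_1$ can be "absorbed" by sliding it along a loop representing an infinite-order class in $H_1(M_1)$. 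So the plan is: choose $[f_1]$ with $W(f_1)$ of infinite order, choose $[f_2]$ arbitrary, form the unlinked $[f]=[f_1]\sqcup[f_2]$, and then use the description of the stabilizer provided by Theorem \ref{thm:main} to pick $[g]$ with, say, $\li(g)=\mathrm{div}(W(f_1))=$ (a value dividing the relevant class, possibly $0$ meaning "any") and $r_1(g)=r_2(g)=0$, $\lii(g)=0$; the parity condition $\li(g)\equiv\lii(g)\pmod 2$ in the definition of $\wt{\Z^4}$ is satisfied since both are even (or zero). This $[g]$ is non-trivial because $\li(g)\neq 0$, and it is not unlinked again because $\li(g)\neq 0$, and $[f]\#[g]=[f]$ by the stabilizer description. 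The delicate point I anticipate is verifying that such a $[g]$ genuinely stabilizes $[f]$ rather than merely having the right numerical invariants — this requires the injectivity/classification statement of Theorem \ref{thm:main} applied to $M_1\sqcup M_2$, showing that $[f]\#[g]$ and $[f]$ share all invariants (Whitney, Haefliger, and the generalized linking invariants), which in turn hinges on the computation that the linking contribution of $[g]$ changes the Whitney-type invariant of $[f]$ by a multiple of $W(f_1)$, hence by zero in $H_1(M_1)/\langle W(f_1)\rangle$-torsion bookkeeping — and that is exactly the arithmetic fact that $\mathrm{div}$ of an infinite-order element "kills" the relevant $\Z$ of linking, parallel to Theorem \ref{thm:Sk}(III).
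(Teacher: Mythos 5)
Your high-level strategy --- produce a suitable $[f]$ and then exhibit a non-unlinked element of its stabilizer via part (III) of Theorem \ref{thm:main} --- is the paper's strategy, and your starting choice of $[f]$ (unlinked, with $W(f_1)$ of infinite order) can in fact be made to work. But the concrete element $[g]$ you specify is wrong, and this is a genuine gap. You ask for $[g]$ with $\li(g)={\rm div}(W(f_1))$ and $\lii(g)=r_1(g)=r_2(g)=0$. First, if $W(f_1)$ is primitive then ${\rm div}(W(f_1))=1$ and $(1,0,0,0)\notin\wt{\Z^4}$, so no such $[g]$ exists at all; your remark that the parity condition holds because ``both are even (or zero)'' is unsupported. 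Second, even when the parity works out, such a $[g]$ need not lie in ${\rm Stab}_f$: among the four generators, the only ones whose $\li$-coordinate can be nonzero while $\lii$, $r_1$, $r_2$ vanish are $(2\K_2f\cap\gamma,0,0,W_2f\cap\gamma)$ and $(2W_2f\cap\delta,2\K_2f\cap\delta,0,0)$, both of which involve $M_2$, not $M_1$. For your unlinked $f$ one has $\K_1f=\K_2f=0$, so if $M_2$ is, say, a homology sphere, then \emph{no} element of ${\rm Stab}_f$ has $\li\neq0$ and $\lii=r_1=r_2=0$, and your $[g]$ fails to stabilize $[f]$.

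The hypothesis that $H_1(M_1)$ is infinite feeds into the \emph{second} coordinate, not the first. With your $f$, choose $\beta\in H_2(M_1)$ with $W_1f\cap\beta=1$ (possible by Poincar\'e duality since $W_1f$ has infinite order); then the generator $(2\K_1f\cap\beta,2W_1f\cap\beta,0,0)=(0,2,0,0)$ lies in ${\rm Stab}_f$, is non-trivial, and is not unlinked since $\lii=2\neq0$ --- that is the element you should have named. (The paper makes the symmetric choice: by part (I) of Theorem \ref{thm:main} it takes $W_1f=0$ and $\K_1f=\alpha'$ with $\alpha'\cap\alpha=1$, so that the generator $(0,2\K_1f\cap\alpha,W_1f\cap\alpha,0)=(0,2,0,0)$ works.) If you keep your route, you must also justify that $\K_1f=\K_2f=0$ for an unlinked $f$, which you use implicitly to kill the first coordinate of that generator; prescribing all four invariants directly via part (I) of Theorem \ref{thm:main}, as the paper does, avoids this extra verification.
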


\begin{rem}
If one omits the ``$g$ is not unlinked'' part of the statement, the corollary above will trivially follow from Theorem \ref{thm:Sk} (cf. Corollary \ref{cor:Sk}).
\end{rem}

\begin{cor}
\label{cor:2}
There are manifolds $M_1$, $M_2$, an element $[f]\in E^6(M_1\sqcup M_2)$, and an unlinked element $[g]\in E^6(S^3\sqcup S^3)$, such that the restrictions of $[f]$ and $[f]\#[g]$ to each connected component are isotopic, but $[f]\neq[f]\#[g]$.
\end{cor}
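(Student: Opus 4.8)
The plan is to exhibit an explicit example. Take $M_1=S^1\times S^2$ (so that $H_1(M_1)\cong\Z$) and $M_2=S^3$. By Theorem \ref{thm:Sk}(I) there is $[f_1]\in E^6(M_1)$ with $W(f_1)$ a generator of $H_1(M_1)$, so ${\rm div}(W(f_1))=1$. Let $[f]\in E^6(M_1\sqcup M_2)$ be represented by $f_1$ on $M_1$ together with a trivial embedding of $S^3$ into a ball disjoint from $f_1(M_1)$; then the components of $f$ lie in disjoint balls, the restriction of $[f]$ to the first component is $[f_1]$, its restriction to the second component is trivial, and the finer linking coefficient $\lambda$ of the pair provided by Theorem \ref{thm:main} --- the analogue for $M_1\sqcup M_2$ of the invariants $\li,\lii$ of Theorem \ref{thm:H} --- is defined on $[f]$ and equals $0$.

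Next I would choose $[g]\in E^6(S^3\sqcup S^3)$: by Theorem \ref{thm:H} there is a class $[g]$ with $\li(g)=\lii(g)=0$, $r_1(g)=1$ and $r_2(g)=0$; by Remark \ref{rem:Hk} it is unlinked, and it is nontrivial since $r_1(g)\neq0$. I would then check that $[f]$ and $[f]\#[g]$ restrict to isotopic embeddings on each component. On the second component the restriction of $[g]$ has Haefliger invariant $r_2(g)=0$, hence is trivial (Remark \ref{rem:Hk}), so the restriction of $[f]\#[g]$ there is again $[f_2]$. On the first component the restriction $[g_1]\in E^6(S^3)$ of $[g]$ has $r(g_1)=r_1(g)=1={\rm div}(W(f_1))$, so Theorem \ref{thm:Sk}(III) gives $[f_1]\#[g_1]=[f_1]$.

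It remains to show $[f]\#[g]\neq[f]$, which is the crux and where Theorem \ref{thm:main} is used. Since $[g]$ is unlinked, taking the connected sum with $[g]$ does not affect the homological linking of the pair, so $\lambda$ is still defined on $[f]\#[g]$; and the formula in Theorem \ref{thm:main} for the action of $\#$ on the invariants shows that $\lambda$ is shifted by a nonzero multiple of $r_1(g)=1$. Hence $\lambda([f]\#[g])\neq0=\lambda([f])$, so $[f]\#[g]\neq[f]$. The main obstacle is precisely this last step: it rests on the detailed form of Theorem \ref{thm:main}, the key phenomenon being that the ``knotting'' $r_1(g)$ injected into the first component, although it leaves that component unchanged as soon as ${\rm div}(W(f_1))\mid r_1(g)$ (by Theorem \ref{thm:Sk}(III)), nonetheless reappears in the linking data of the pair. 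Nothing like this can happen for $M_1=M_2=S^3$, where by Theorem \ref{thm:H} the operation $\#$ is just addition in $\wt{\Z^4}$; this is why the example is forced to use a manifold $M_1$ with $H_1(M_1)$ infinite.
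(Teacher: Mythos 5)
Your example does not work, and the failure is instructive. You take $f$ with the two components in disjoint balls and the $S^3$-component trivial; as you note, the pairwise linking invariant then vanishes, i.e.\ $\K_1(f)=\K_2(f)=0$ (and $W_2(f)=0$, $W_1(f)$ a generator of $H_1(S^1\times S^2)$). Now compute ${\rm Stab}_f$ from its definition: taking $\alpha=[*\times S^2]\in H_2(S^1\times S^2)$, the first generator is
$$(0,\,2\K_1f\cap\alpha,\,W_1f\cap\alpha,\,0)=(0,0,1,0),$$
since $\K_1 f=0$ and $W_1f\cap\alpha=1$. So $(0,0,1,0)=(\li\times\lii\times r_1\times r_2)(g)$ lies in ${\rm Stab}_f$, and part (III) of Theorem \ref{thm:main} gives $[f]\#[g]=[f]$ --- the opposite of what you need. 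Intuitively, with no linking between the components, the spherical knot $r_1(g)=1$ can be unknotted by dragging it around $M_1$ exactly as in Corollary \ref{cor:Sk}, and the presence of the unlinked $M_2$ does not obstruct this. The paper's point is that the obstruction appears only when the components are homologically linked: its proof chooses $f$ with $W_1(f)=\K_1(f)=[S^1\times *]$ \emph{nonzero}, so that the generators of ${\rm Stab}_f$ become $(0,2,1,0)$ and $(2,2,0,0)$, and $(0,0,1,0)$ is visibly not in their span.

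A second, related problem is your final step. You appeal to a ``finer linking coefficient $\lambda$ of the pair provided by Theorem \ref{thm:main}'' and assert it is shifted by $r_1(g)$. No such absolute invariant of elements of $E^6(M_1\sqcup M_2)$ is defined in the paper or supplied by Theorem \ref{thm:main}; the theorem only describes each fiber of $\WL$ as a quotient of $\wt{\Z^4}$ by ${\rm Stab}_f$, so the only legitimate way to conclude $[f]\neq[f]\#[g]$ is to verify $(\li\times\lii\times r_1\times r_2)(g)\notin{\rm Stab}_f$. For your $f$ that verification fails. Your choices of $M_1$, $M_2$, and $g$ agree with the paper's; the fix is to replace your $f$ by one with $\K_1(f)\neq 0$ and then carry out the stabilizer computation explicitly.
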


\begin{rem}
Informally, Corollary \ref{cor:Sk} means that we can sometimes unknot spherical knots by ``dragging'' them along a knotted manifold $M_1$ with infinite $H_1(M_1)$. Corollary \ref{cor:2} then means that sometimes this procedure is made impossible by the presence of another manifold $M_2$ linked with $M_1$.
\end{rem}

For an embedding $f:M_1\sqcup M_2\rightarrow S^6$ and $k\in\{1,2\}$ define
$$W_k:E^6(M_1\sqcup M_2)\rightarrow H_1(M_k) \text{\quad by the formula\quad} W_k(f)=W(f|_{M_k}).$$
I.e., $W_k(f)$ is the Whitney invariant of the restriction of $f$ to the $k$-th connected component.
The map
$$\K_1\times \K_2:E^6(M_1\sqcup M_2)\rightarrow H_1(M_1)\times H_1(M_2)$$
is defined below in \S\ref{SS:whitney}. All four $W_1$, $\K_1$, $W_2$, $\K_2$ are called (generalized) Whitney invariants. 

For brevity we denote 
$$\WL:=W_1\times \K_1\times W_2\times \K_2$$
for the rest of the text.

For any $[f]\in E^6(M_1\sqcup M_2)$ let ${\rm Stab}_f\subset \wt{\Z^4}$ be the subgroup generated by all elements
\begin{itemize}
\item $(0,2\K_1f\cap \alpha,W_1f\cap \alpha, 0)\in \wt{\Z^4},$
\item $(2\K_1f\cap \beta,2W_1f\cap \beta,0, 0)\in \wt{\Z^4},$
\item $(2\K_2f\cap \gamma,0,0,W_2f\cap \gamma)\in \wt{\Z^4},$
\item $(2W_2f\cap \delta,2\K_2f\cap \delta,0, 0)\in \wt{\Z^4},$
\end{itemize}
where $\alpha$, $\beta$ take all values in $H_2(M_1)$ and $\gamma$, $\delta$ take all values in $H_2(M_2)$, and $\cap$ denotes the cap product.

\begin{thm}
\label{thm:main}
For any closed connected orientable $3$-manifold $M_1$ and $M_2$
\begin{itemize}
\item [{\bf (I)}] the map
$$\WL:E^6(M_1\sqcup M_2)\rightarrow H_1(M_1)\times H_1(M_1)\times H_1(M_2)\times H_1(M_2)$$
is surjective.

\item [{\bf (II)}] The embedded connected sum action of $E^6(S^3\sqcup S^3)$ is transitive on each of the preimages of $\WL$.

\item [{\bf (III)}] For any $[f]\in E^6(M_1\sqcup M_2)$ and $[g]\in E^6(S^3\sqcup S^3)$ the class $[g]$ is in the stabilizer of $[f]$ under the action $\#$ if and only if $$(\li\times\lii\times r_1\times r_2)(g)\in {\rm Stab}_f\subset\wt{\Z^4}.$$
\end{itemize}
\end{thm}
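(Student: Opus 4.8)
The plan is to prove the three parts by reducing them, as far as possible, to the already-established Theorems~\ref{thm:Hk}, \ref{thm:Sk}, and \ref{thm:H}, using the fact that an embedding $f:M_1\sqcup M_2\rightarrow S^6$ restricts to embeddings $f|_{M_k}$ of the individual components, while the extra data linking the two components (the invariants $\K_1,\K_2$) must be built and controlled by hand.

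For part (I), I would first observe that $W_1$ and $W_2$ are surjective by Theorem~\ref{thm:Sk}(I) applied separately to $M_1$ and $M_2$. The task is then to realize an arbitrary value of $\K_1\times\K_2$ while keeping $W_1,W_2$ fixed. Starting from an unlinked embedding $f_0$ with the prescribed $W_1,W_2$ (components placed in disjoint balls), I would modify it by an ambient ``linking move'': drag a component, or a suitable tube/handle representing a $1$-cycle, around a $2$-cycle of the other manifold, in the spirit of the construction of the linking coefficients in \S\ref{SS:lambda} and of the map $\K_1\times\K_2$ in \S\ref{SS:whitney}. One must check that this move changes $\K_1\times\K_2$ by exactly the desired element of $H_1(M_1)\times H_1(M_2)$ and leaves $W_1,W_2$ unchanged; this is the main geometric content of (I).

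For part (II), given two embeddings $f,f'$ with $\WL(f)=\WL(f')$, I would first use Theorem~\ref{thm:H} (or rather an appropriate relative/parametrized version of the Haefliger–Zeeman arguments behind it) to arrange that $f$ and $f'$ agree outside a ball, i.e.\ differ by a connected sum with some element of $E^6(S^3\sqcup S^3)$: this is the standard ``modify an embedding in a ball'' principle once all lower-dimensional obstructions (here packaged into $\WL$) vanish. Then I must show that the $E^6(S^3\sqcup S^3)$-element needed is realized, i.e.\ that every element of $E^6(S^3\sqcup S^3)$ genuinely acts; combined with the identification of $\mathrm{im}(\li\times\lii\times r_1\times r_2)=\wt{\Z^4}$ this gives transitivity on fibers. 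The subtlety is bookkeeping: one has to verify that $\#[g]$ does not change $\WL$ (so the action is fiberwise) and that the difference class is well-defined and hit.

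Part (III) is where the real work lies, and I expect it to be the main obstacle. The ``stabilizer $\supseteq$'' direction amounts to exhibiting, for each generator of $\mathrm{Stab}_f$, an explicit isotopy of $f$ to $f\#g$ with the corresponding $g$: each generator has the shape ``$2(\text{Whitney invariant})\cap(\text{$2$-cycle})$'' or ``$(\text{Whitney invariant})\cap(\text{$2$-cycle})$'', which strongly suggests the mechanism of Theorem~\ref{thm:Sk}(III)---sliding a local spherical knot around a surface dual to a homology class, the divisibility of $W$ appearing because the knot returns to itself after traversing a cycle---now carried out both within a single component (contributing the $\beta$- and $\delta$-type generators, with an extra factor $2$ reflecting the mod-$2$ constraint in $\wt{\Z^4}$, i.e.\ the self-linking parity) and across the two components (the $\alpha$- and $\gamma$-type generators, which move a $3$-sphere summand of one component around a $2$-cycle of the other and thereby change $\li$ or $\lii$). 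The converse ``stabilizer $\subseteq$'' direction requires showing these are the \emph{only} relations: if $f\#g=f$ then $(\li\times\lii\times r_1\times r_2)(g)$ must lie in $\mathrm{Stab}_f$. For this I would compute the effect of $\#[g]$ on a complete invariant. Since $\WL$ is \emph{not} complete (Corollaries~\ref{cor:1}, \ref{cor:2} show precisely that), I anticipate needing a finer invariant---a generalized Haefliger/Kreck-type invariant valued in some quotient of $\wt{\Z^4}$ by $\mathrm{Stab}_f$---so that part (III) is essentially the statement that this quotient detects the orbit. Proving that no further collapsing occurs (i.e.\ the map $\wt{\Z^4}/\mathrm{Stab}_f\to E^6(M_1\sqcup M_2)$ is injective on the orbit of $[f]$) will likely require a careful obstruction-theoretic or surgery-theoretic analysis of the complement, and is the step I expect to be hardest.
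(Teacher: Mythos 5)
Your proposal correctly identifies the shape of the problem, and parts of it do match the paper: for (I) the paper likewise changes one invariant at a time, via a linked connected sum $f\#_k(g|_{S^3})$ with an unknotted sphere bounding a disk that meets $f(M_k)$ in a prescribed $1$-cycle (Claim \ref{clm:wsurj}); and for the ``$\supseteq$'' half of (III) the generators of ${\rm Stab}_f$ are indeed realized by sliding spheres around surfaces dual to homology classes. But there is a genuine gap at exactly the two places you defer to general principles. For (II), the assertion that two embeddings with equal $\WL$ ``agree outside a ball'' by a standard modify-in-a-ball principle once the obstructions packaged in $\WL$ vanish is not available here: $(n,m)=(3,6)$ lies outside the metastable range $2m\geq 3n+4$, and the paper stresses that this is precisely why no prior technique applies to disconnected non-spherical manifolds in this dimension. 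That step is essentially the statement of (II) itself, not a lemma you can quote. For the ``$\subseteq$'' half of (III), you postulate a complete invariant valued in $\wt{\Z^4}/{\rm Stab}_f$ but give no construction; again this restates the goal rather than arguing for it.

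What is missing is the paper's central mechanism. Present each $M_k$ as surgery on a link of solid tori in $S^3$, let $\wh{M_k}$ be the link complement, and consider proper embeddings $\wh{M_1}\sqcup\wh{M_2}\rightarrow D^6_+$ fixed on the boundary. Filling in the solid tori in two ways yields surjections $\sigma:\wh{E}^6(\wh{M_1}\sqcup\wh{M_2})\rightarrow E^6(M_1\sqcup M_2)$ and $\sigma_R:\wh{E}^6(\wh{M_1}\sqcup\wh{M_2})\rightarrow E^6(S^3\sqcup S^3)$ (Lemma \ref{lem:surj}), and the unknown set is classified by describing the ``composition'' $\sigma_R\circ\sigma^{-1}$: the Preimage Lemma \ref{lem:pre} (proved by a Whitney-trick argument on a concordance) identifies the ambiguity of $\sigma^{-1}$ as linked connected sums with the meridian spheres $\omega_{k,i}$; the Calculation Lemma \ref{lem:calc} computes the effect of each such move on $\li,\lii,r_1,r_2$ and on the Whitney invariants; and the Bijectivity Lemma \ref{lem:bij} shows $\sigma_R$ is injective on each fiber of $\wh{\WL}$. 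Parts (II) and (III) then follow by transferring Haefliger's Theorem \ref{thm:H} through this dictionary, with ${\rm Stab}_f$ arising as the image under the Calculation Lemma of the ambiguity of $\sigma^{-1}$ (via Claims \ref{clm:delta} and \ref{clm:alpha}). Without this, or an equivalent bridge between $E^6(M_1\sqcup M_2)$ and $E^6(S^3\sqcup S^3)$, the hard steps of your outline cannot be completed.
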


Parts (II) and (III) of Theorem \ref{thm:main} can be restated in terms of description the preimages of $\WL$.

\begin{thm}
\label{thm:mainc}
For any $[f]\in E^6(M_1\sqcup M_2)$ there is a surjective map
$$\phi_{[f]}:\wt{\Z^4}\rightarrow \WL^{-1}\WL(f)$$
such that for any $x,y\in \wt{\Z^4}$ we have $\phi_{[f]}(x)=\phi_{[f]}(y)$ if and only if $x-y\in {\rm Stab}_f$.
\end{thm}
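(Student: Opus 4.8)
The plan is to deduce Theorem \ref{thm:mainc} as a formal consequence of parts (II) and (III) of Theorem \ref{thm:main}, so the real content has already been done; what remains is a packaging argument. First I would fix $[f]\in E^6(M_1\sqcup M_2)$ and use Theorem \ref{thm:H}: since $\li\times\lii\times r_1\times r_2$ is an isomorphism of $E^6(S^3\sqcup S^3)$ onto $\wt{\Z^4}$, I may regard $\wt{\Z^4}$ \emph{as} the group $E^6(S^3\sqcup S^3)$ and transport the action $\#$ on $E^6(M_1\sqcup M_2)$ to an action of $\wt{\Z^4}$. Then I would define $\phi_{[f]}:\wt{\Z^4}\to \WL^{-1}\WL(f)$ by $\phi_{[f]}(x)=[f]\#[g_x]$, where $[g_x]\in E^6(S^3\sqcup S^3)$ is the unique class with $(\li\times\lii\times r_1\times r_2)(g_x)=x$. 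This map lands in $\WL^{-1}\WL(f)$ because the embedded connected sum with a disjoint pair of knotted spheres does not change any of the restrictions up to the action of $E^6(S^3)$ on each factor, and the Whitney-type invariants $W_1,\K_1,W_2,\K_2$ are unaffected (this is implicit in how $\#$ interacts with $\WL$ — I would cite the relevant compatibility from \S\ref{SS:sum} and \S\ref{SS:whitney}).

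Next I would prove surjectivity of $\phi_{[f]}$: given any $[h]$ with $\WL(h)=\WL(f)$, part (II) of Theorem \ref{thm:main} says the action of $E^6(S^3\sqcup S^3)$ is transitive on the preimage $\WL^{-1}\WL(f)$, so $[h]=[f]\#[g]$ for some $[g]$, and taking $x=(\li\times\lii\times r_1\times r_2)(g)$ gives $\phi_{[f]}(x)=[h]$. For the fiber description, suppose $\phi_{[f]}(x)=\phi_{[f]}(y)$, i.e. $[f]\#[g_x]=[f]\#[g_y]$. Using that $\#$ is a group action on $E^6(M_1\sqcup M_2)$ (with identity the unlinked trivial class) and that $(\li\times\lii\times r_1\times r_2)$ is a homomorphism by Theorem \ref{thm:H}, this is equivalent to $[f]\#[g_{x-y}]=[f]$, i.e. $[g_{x-y}]$ lies in the stabilizer of $[f]$. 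By part (III) of Theorem \ref{thm:main} this holds exactly when $(\li\times\lii\times r_1\times r_2)(g_{x-y})=x-y\in{\rm Stab}_f$. Conversely, if $x-y\in{\rm Stab}_f$ then running the same equivalences backward gives $\phi_{[f]}(x)=\phi_{[f]}(y)$. This establishes the theorem.

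The only genuinely delicate point — and the step I would be most careful about — is verifying that $\#$ really is a \emph{group action}: that $[f]\#([g_1]\#[g_2])=([f]\#[g_1])\#[g_2]$ and that the trivial unlinked class acts as the identity, so that the manipulation $[f]\#[g_x]=[f]\#[g_y]\iff[f]\#[g_{x-y}]=[f]$ is legitimate. The paper asserts in the paragraph before the definition of $r_k$ that ``the component-wise embedded connected sum operation $\#$ defines a group structure on $E^6(S^3\sqcup S^3)$ and an action of $E^6(S^3\sqcup S^3)$ on $E^6(M_1\sqcup M_2)$,'' so this can be cited from \S\ref{SS:sum}; I would just make sure the transport through the isomorphism of Theorem \ref{thm:H} is done consistently (i.e. $[g_{x+y}]=[g_x]\#[g_y]$, which is exactly the statement that $\li\times\lii\times r_1\times r_2$ is a group isomorphism, not merely a bijection). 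Everything else is formal, so I expect the proof of Theorem \ref{thm:mainc} to be short, essentially a restatement of Theorem \ref{thm:main}(II)--(III) in orbit/stabilizer language.
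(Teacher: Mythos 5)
Your proposal is correct and is exactly the intended argument: the paper offers no separate proof of Theorem~\ref{thm:mainc}, presenting it as a direct orbit--stabilizer restatement of parts (II) and (III) of Theorem~\ref{thm:main} via the identification $E^6(S^3\sqcup S^3)\cong\wt{\Z^4}$ from Theorem~\ref{thm:H}, which is what you carry out. The compatibility $\WL([f]\#[g])=\WL([f])$ that you flag is indeed the only thing to check, and it is used (as ``clear'') elsewhere in the paper, e.g.\ in the proof of part (II) of Theorem~\ref{thm:main}.
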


\begin{rem}
In the prequel \cite{Av16} the author proved Theorem \ref{thm:main} in the special case of $M_1=S^1\times S^2$, $M_2=S^3$ and only for embeddings $S^1\times S^2\sqcup S^3\rightarrow S^6$ whose restrictions to $S^1\times S^2$ and $S^3$ are isotopic to the standard embeddings. Unfortunately, methods used there do not work in the general case. For instance, Corollary \ref{cor:2} cannot be deduced from \cite{Av16}.
\end{rem}

\begin{exm}
Suppose that $M_1$ and $M_2$ are homology spheres. Then the action $\#$ is transitive and free, and thus gives a $1$-$1$ correspondence between $E^6(M_1\sqcup M_2)$ and $E^6(S^3\sqcup S^3)$.
\end{exm}

\begin{exm}
Suppose that $M_1$ and $M_2$ are rational homology spheres (for instance $M_1=M_2={\mathbb R}P^3$). Then each of $|H_1(M_1)|\cdot|H_1(M_2)|$ preimages of $\WL$ is in $1$-$1$ correspondence with $E^6(S^3\sqcup S^3)$.
\end{exm}

\begin{proof}[Proof of Corollary \ref{cor:1}]
Since $H_1(M_1)$ is infinite, there are $\alpha'\in H_1(M_1)$ and $\alpha\in H_2(M_1)$ such that $\alpha'\cap\alpha=1$.

By part (I) of Theorem \ref{thm:main}, there is an embedding $f:M_1\sqcup M_2\rightarrow S^6$ such that $W_1f=0$ and $\K_1f=\alpha'$.

By Theorem \ref{thm:H}, there is an embedding $g:S^3\sqcup S^3\rightarrow S^6$ such that $$(\li\times\lii\times r_1\times r_2)(g)=(0,2,0,0)=(0,2\K_1f\cap \alpha,W_1f\cap \alpha, 0)\in{\rm Stab}_f.$$

Embeddings $f$ and $g$ are as required. Indeed, $g$ is not unlinked, see Remark \ref{rem:Hk}, and $[f]=[f]\#[g]$ by part (III) of Theorem \ref{thm:main}.
\end{proof}

\begin{proof}[Proof of Corollary \ref{cor:2}]
Take $M_1=S^1\times S^2$ and $M_2=S^3$. By part (I) of Theorem \ref{thm:main}, there is an embedding $f:S^1\times S^2\sqcup S^3\rightarrow S^6$ such that $W_1(f)=L_1(f)=[S^1\times *]$ and $W_2(f)=L_2(f)=0$.

By Theorem \ref{thm:H}, there is an embedding $g:S^3\sqcup S^3\rightarrow S^6$ such that $(\li\times\lii\times r_1\times r_2)(g)=(0,0,1,0)$.

Let us prove that $f$ and $g$ are as required. Embedding $g$ is unlinked, see Remark \ref{rem:Hk}. By part (III) of Theorem \ref{thm:Sk}, we have that the restrictions of $[f]$ and $[f]\#[g]$ to each connected component are isotopic.

It remains to check that $[f]\neq[f]\#[g]$. The group ${\rm Stab}_f$ is generated by two elements, $(0,2,1,0)$ and $(2,2,0,0)$ of $\wt{\Z^4}$ (one can obtain these generators by substituting $\alpha=\beta=[*\times S^2]$ in the definition of ${\rm Stab}_f$). Clearly, $(\li\times\lii\times r_1\times r_2)(g)=(0,0,1,0)$ is not a linear combination of $(0,2,1,0)$ and $(2,2,0,0)$. So, $[f]\neq[f]\#[g]$ by part (III) of Theorem \ref{thm:main}.
\end{proof}

\smallskip
\subsection{PL version of the main result.}
\label{SS:PL}
For a PL manifold $N$ denote by $E^m_{PL}(N)$ the set of PL isotopy classes of PL embeddings $N\rightarrow S^m$.

In this subsection $M_k$ also denotes the PL manifold obtained by triangulating the smooth manifold $M_k$. In dimension $3$ any PL manifold may be obtained in this way, see for example \cite{Wh}.

The definition of linking coefficients
$$\li,\lii:E^6_{PL}(S^3\sqcup S^3)\rightarrow{\mathbb Z},$$
of Whitney invariants
$$\WL:E^6_{PL}(M_1\sqcup M_2)\rightarrow H_1(M_1)\times H_1(M_1)\times H_1(M_2)\times H_1(M_2),$$
and of the (componentwise) embedded connected sum $\#$ carries over from the smooth category without any changes.

The set $E^6_{PL}(S^3\sqcup S^3)$ is still a group with $\#$ being the group operation.

\begin{thm}[A. Haefliger, \cite{Ha62}]
\label{thm:Hpl}
The map $\li\times\lii:E^6_{PL}(S^3\sqcup S^3)\rightarrow \Z^2$ is a monomorphism and its image is $\{(a,b)\in {\mathbb Z}^2|a\equiv b\pmod{2}\}$.
\end{thm}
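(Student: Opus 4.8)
The plan is to deduce the PL statement from the smooth classification in Theorem \ref{thm:H} together with the general relationship between smooth and PL isotopy classes in this dimension range. First I would recall that for embeddings of $3$-manifolds in $S^6$ the codimension is $3$, which is the borderline case where smoothing theory is controlled by Haefliger's groups $\Theta_3 = 0$ and the relevant obstruction groups for the difference between PL and smooth isotopy. The forgetful map $E^6(S^3\sqcup S^3)\to E^6_{PL}(S^3\sqcup S^3)$ is a homomorphism with respect to $\#$, so it suffices to (a) show this map is surjective, and (b) identify its kernel, which by Theorem \ref{thm:H} will be precisely the subgroup on which $\li\times\lii$ vanishes, i.e., the Haefliger-knotted part detected by $r_1\times r_2$.

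The key steps, in order: (1) Observe $\li$ and $\lii$ are defined by the same geometric recipe (Seifert surface / intersection with a bounding chain) in both categories, so the diagram relating the smooth and PL versions of $\li\times\lii$ commutes with the forgetful map. (2) Show the forgetful map $E^6\to E^6_{PL}$ is onto: every PL embedding of a $3$-manifold in $S^6$ can be smoothed since the obstructions lie in groups that vanish in this range (alternatively, cite that PL and smooth categories agree for such embeddings up to the Haefliger phenomenon, which only \emph{adds} smooth isotopy classes). (3) Compute the kernel: a smooth embedding $g$ becomes PL-trivial iff it is PL-unlinked with PL-trivial components; by the PL unknotting of $S^3$ in $S^6$ (Zeeman/Haefliger, codimension $\geq 3$) the components are automatically PL-trivial, and the ``unlinked iff $\li=\lii=0$'' statement is exactly what we need to establish — which is the PL analogue of Remark \ref{rem:Hk}. (4) Conclude: $\ker(\text{forgetful}) = \{g : \li(g)=\lii(g)=0\}$, which under the smooth isomorphism $\li\times\lii\times r_1\times r_2 \cong \wt{\Z^4}$ is the subgroup $0\times 0\times\Z\times\Z$; hence $E^6_{PL}(S^3\sqcup S^3)\cong \wt{\Z^4}/(0\times 0\times\Z\times\Z)$, and $\li\times\lii$ descends to a monomorphism onto $\{(a,b): a\equiv b\pmod 2\}$, since the parity condition survives the quotient.

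The main obstacle I expect is step (3): cleanly proving that $\li(g)=\lii(g)=0$ forces $g$ to be PL-unlinked, rather than merely having vanishing smooth linking coefficients. The subtlety is that this is the ``hard direction'' and in the smooth category Remark \ref{rem:Hk} explicitly defers it to the PL version — so one cannot argue circularly. The honest route is to follow Haefliger's original construction in \cite{Ha62}: given $\li(g)=\lii(g)=0$, build an explicit PL isotopy pulling the two components into disjoint balls by using a framed bounding $4$-chain for one component disjoint from the other (the linking coefficients are precisely the obstructions to making such a chain embedded and disjoint), and in the PL category there are no further Haefliger-type obstructions because the ambient dimension $6 = 2\cdot 3$ is exactly at the threshold where PL concordance implies PL isotopy for these pairs. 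Once that geometric engelement is in place, the algebra in step (4) is routine.
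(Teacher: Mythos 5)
The paper does not prove Theorem \ref{thm:Hpl} at all: it is quoted as a black box from Haefliger's \emph{Differentiable links} (it is the $k=2$ case of the survey statement that $\lambda_{12}\oplus\lambda_{21}$ is injective on $E^{3k}_{PL}(S^{2k-1}\sqcup S^{2k-1})$ with image $\{(a,b):\Sigma a=\Sigma b\}$), and the paper's own survey remarks that the \emph{smooth} Theorem \ref{thm:H} is obtained by combining this PL theorem with other results. So your derivation runs the historical logic backwards; it is legitimate only as a deduction of \ref{thm:Hpl} from \ref{thm:H}-as-a-black-box, not as an independent proof of Haefliger's theorem, and you should say so explicitly to avoid the circularity you yourself flag.

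Granting that, your outline is correct, and in fact your ``main obstacle'' (step (3), the geometric unlinking of a link with $\li=\lii=0$) is unnecessary: you never need to \emph{construct} a PL isotopy separating the components. If $\li(g)=\lii(g)=0$, then $(\li,\lii,r_1,r_2)(g)=(0,0,c,d)$; build an honestly unlinked embedding $g'$ by placing two Haefliger knots with invariants $c$ and $d$ in disjoint $6$-balls, note that $g'$ realizes the same quadruple (Lemma \ref{lem:conn} in the split case), and conclude $[g]=[g']$ from the \emph{injectivity} half of Theorem \ref{thm:H}. Applying the forgetful map and Zeeman's PL unknotting of $S^3\subset S^6$ then kills $F([g])$. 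Combined with surjectivity of the forgetful map (the smoothing obstructions lie in $H^{i+1}(S^3\sqcup S^3;E^{i+3}(S^i))=0$, exactly as in Lemma \ref{lem:fg}(3)) and its compatibility with $\li,\lii$ and $\#$, this identifies $\ker(F)$ with $0\times0\times\Z\times\Z$ and gives both the injectivity and the image statement by the quotient computation $\wt{\Z^4}/(0\times0\times\Z\times\Z)\cong\{(a,b):a\equiv b\pmod 2\}$. Alternatively, Lemma \ref{lem:fg}(4) applied with $M_1=M_2=S^3$ gives the same identification of the fibers of $F$ by obstruction theory for smoothing a PL concordance. Either way the argument closes without reconstructing Haefliger's framed-Seifert-chain engulfing, which as written in your step (3) is only a gesture and would be the one genuinely incomplete point if you insisted on that route.
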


For any $[f]\in E^6_{PL}(M_1\sqcup M_2)$ let ${\rm Stab}_{PL,f}\subset \Z^2$ be the subgroup generated by all elements
\begin{itemize}
\item $(0,2\K_1f\cap \alpha),$
\item $(2\K_1f\cap \beta,2W_1f\cap \beta),$
\item $(2\K_2f\cap \gamma,0),$
\item $(2W_2f\cap \delta,2\K_2f\cap \delta),$
\end{itemize}
where $\alpha$, $\beta$ take all values in $H_2(M_1)$ and $\gamma$, $\delta$ take all values in $H_2(M_2)$.

\begin{rem}
In the definition of ${\rm Stab}_{PL,f}$ one can replace $(0,2\K_1f\cap \alpha)$ and $(2\K_2f\cap \gamma,0)$ by $(0, 2{\rm div}(\K_1f))$ and $(2{\rm div}(\K_2f),0)$, respectively. We do not know of any further simplifications.
\end{rem}

\begin{thm}
\label{thm:mainpl}
For any closed connected orientable $PL$ $3$-manifold $M_1$ and $M_2$
\begin{itemize}
\item [{\bf (I)}] the map
$$\WL:E^6_{PL}(M_1\sqcup M_2)\rightarrow H_1(M_1)\times H_1(M_1)\times H_1(M_2)\times H_1(M_2)$$
is surjective.

\item [{\bf (II)}] The embedded connected sum action of $E^6_{PL}(S^3\sqcup S^3)$ is transitive on each of the preimages of $\WL$.

\item [{\bf (III)}] For any $[f]\in E^6_{PL}(M_1\sqcup M_2)$ and $[g]\in E^6_{PL}(S^3\sqcup S^3)$ the class $[g]$ is in the stabilizer of $[f]$ under the action $\#$ if and only if $(\li\times\lii)(g)\in {\rm Stab}_{PL,f}\subset \Z^2.$
\end{itemize}
\end{thm}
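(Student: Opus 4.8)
The plan is to deduce the PL classification from the smooth one (Theorem \ref{thm:main}) by analyzing the forgetful map $E^6(M_1\sqcup M_2)\to E^6_{PL}(M_1\sqcup M_2)$, using the fact that the only difference between the smooth and PL categories in this range comes from Haefliger knotting of $3$-spheres in $S^6$, which is precisely the kernel $r_1\times r_2:\wt{\Z^4}\to\Z^2$ of the homomorphism $\li\times\lii$ on $E^6_{PL}(S^3\sqcup S^3)$ (compare Theorems \ref{thm:H} and \ref{thm:Hpl}). Concretely, I would first recall (or prove, using general position: a PL embedding of a $3$-manifold in $S^6$ has codimension $3$, so it can be smoothed, and two smoothings differ by a PL-trivial but possibly smoothly-knotted sphere) that the forgetful map is surjective and that two smooth embeddings become PL isotopic if and only if they differ by componentwise connected sum with a smooth embedding $g:S^3\sqcup S^3\to S^6$ whose linking coefficients $\li(g),\lii(g)$ vanish, i.e. with $g$ in the subgroup $\ker(\li\times\lii)\subset E^6(S^3\sqcup S^3)$. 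By Theorem \ref{thm:H} this kernel is exactly the set of classes with $(\li,\lii,r_1,r_2)(g)=(0,0,a,b)$, $a,b\in\Z$ arbitrary — the ``purely Haefliger-knotted'' embeddings.

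Next I would establish part (I). The Whitney invariant $\WL$ in the PL category is defined by the same obstruction-theoretic formula as in the smooth category and is unchanged by smoothing, so surjectivity of the PL $\WL$ follows immediately from part (I) of Theorem \ref{thm:main} together with surjectivity of the forgetful map.

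For parts (II) and (III) I would pass to the quotient. Write $q:E^6(M_1\sqcup M_2)\to E^6_{PL}(M_1\sqcup M_2)$ for the forgetful map; by the first paragraph, $q$ is surjective and its fibers are exactly the orbits of the $\#$-action of $\ker(\li\times\lii)\subset E^6(S^3\sqcup S^3)$. Since $\#$ is equivariant with respect to $q$ and the reduction $E^6(S^3\sqcup S^3)\to E^6_{PL}(S^3\sqcup S^3)$, and since $\WL\circ q=\WL$ (PL), transitivity of the PL action on a preimage $\WL^{-1}(w)$ follows from transitivity of the smooth action (Theorem \ref{thm:main}(II)) on the corresponding smooth preimage: given two PL embeddings over $w$, lift them to smooth embeddings over $w$, connect them by a smooth $\#g$, and project. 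This proves (II). For (III), fix $[f]\in E^6_{PL}(M_1\sqcup M_2)$ and a smooth lift $\wt f$. Then $[f]\#[g]=[f]$ in the PL category, for $[g]\in E^6_{PL}(S^3\sqcup S^3)$, if and only if $\wt f\#\wt g$ and $\wt f$ have the same image under $q$ for some smooth lift $\wt g$ of $[g]$, i.e. if and only if $\wt f\#\wt g\#\wt h=\wt f$ for some $\wt h\in\ker(\li\times\lii)$. By Theorem \ref{thm:main}(III) this says $(\li\times\lii\times r_1\times r_2)(\wt g)+(0,0,r_1\wt h,r_2\wt h)\in{\rm Stab}_{\wt f}$, and since $r_1\wt h,r_2\wt h$ range over all of $\Z^2$ as $\wt h$ ranges over $\ker(\li\times\lii)$, this is equivalent to $(\li\times\lii)(g)\in$ image of ${\rm Stab}_{\wt f}$ under the projection $\wt{\Z^4}\to\Z^2$ onto the first two coordinates. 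That projected image is precisely ${\rm Stab}_{PL,f}$ by inspection of the two lists of generators — the last three generator-types project identically, while $(0,2\K_1f\cap\alpha,W_1f\cap\alpha,0)$ projects to $(0,2\K_1f\cap\alpha)$, matching the PL list — so (III) follows.

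The main obstacle I anticipate is the first step: rigorously justifying that the forgetful map $E^6\to E^6_{PL}$ is surjective with fibers equal to the $\#$-orbits of $\ker(\li\times\lii)$. This requires a smoothing-theory argument in codimension three — that every PL embedding of a closed $3$-manifold in $S^6$ is isotopic to a smooth one, and that the ambiguity in smoothing an embedded sphere is measured exactly by the Haefliger group $\ker(E^6(S^3)\to E^6_{PL}(S^3))\cong\Z$ acting by connected sum — and that this smoothing obstruction is additive under disjoint union and compatible with $\#$. Once this structural fact is in hand, everything else is a formal diagram chase plus the bookkeeping that the projection of ${\rm Stab}_{\wt f}$ to $\Z^2$ equals ${\rm Stab}_{PL,f}$, which is immediate from the definitions.
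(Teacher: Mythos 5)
Your proposal follows essentially the same route as the paper: the paper's Lemma 1.17 establishes exactly your structural facts about the forgetful map (surjectivity via vanishing of the smoothing obstructions in $H^{i+1}(M_1\sqcup M_2;E^{i+3}(S^i))$, and fibers equal to orbits of the unlinked subgroup $\ker(\li\times\lii)$ via smoothing a concordance), and then deduces Theorem 1.12 from Theorem 1.6 by the same quotient/projection bookkeeping you describe, with your observation that ${\rm Stab}_{PL,f}$ is the image of ${\rm Stab}_f$ under the projection $\wt{\Z^4}\to\Z^2$ being correct. The only cosmetic difference is that the paper passes through the piecewise-smooth category as an intermediary for the smoothing arguments.
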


\smallskip
\subsection{A very brief survey on embeddings classification.}
\label{SS:survey}
According to E. C. Zeeman (\cite{Z}, \cite{MA_emb}), three major classical problems of topology are the following.
\begin{itemize}
\item {\it Homeomorphism Problem:} Classify $n$-manifolds.
\item {\it Embedding Problem:} Find the least dimension $m$ such that given space admits an embedding into $m$-dimensional Euclidean space ${\mathbb R}^m$.
\item {\it Knotting Problem:} Classify embeddings of a given space into another given space up to isotopy.
\end{itemize}
This paper is on a special case of the third problem.

Let us start with the known results on the sets $E^m(S^n)$ and $E^m_{PL}(S^n)$.
The set $E^3(S^1)$ (or $E^3_{PL}(S^1)$) is studied in the classical knot theory which produced a lot of beautiful results in the last $200$ years. However, relatively early it was understood that a complete classification of $E^3(S^1)$ is probably unachievable. In general, there is no known complete classification of $E^m(S^n)$ for $m=n+2\geq 3$.

The situation is much better when $m\geq n+3$ (codimension at least $3$ case). So, until the end of this subsection we assume that $m\geq n+3$. 

The following two theorems establish that there are no knots in case when the codimension $m-n$ is large enough. Somewhat surprisingly, the precise meaning of ``large enough'' is different in the smooth and PL categories.

\begin{thm*}[E. C. Zeeman, {\cite[Theorem $2$]{Z63}}]
$|E^m_{PL}(S^n)|=1.$
\end{thm*}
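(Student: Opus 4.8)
The plan is to prove the stronger statement that any PL embedding $f\colon S^n\to S^m$ with $m\geq n+3$ bounds a PL embedded disc $D^{n+1}\to D^{m+1}$, and then to invoke PL isotopy extension to conclude that $f$ is isotopic to the standard embedding. Equivalently, I would prove the \emph{PL Unknotting Theorem}: every PL embedded sphere of codimension at least $3$ is unknotted. The cleanest route is the \emph{engulfing} method of Stallings and Zeeman.

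First I would set up the problem by placing $f(S^n)$ in $S^m$ and choosing a point $p\notin f(S^n)$, so that $f(S^n)\subset S^m\setminus\{p\}\cong\R^m$; it then suffices to show $f(S^n)$ bounds a PL ball in $\R^m$, or better, to contract the complement appropriately. The key technical input is that the codimension hypothesis $m-n\geq 3$ makes the relevant complements highly connected: by general position, $\pi_i(S^m\setminus f(S^n))=0$ for $i\leq m-n-2$, and in particular the complement is simply connected since $m-n-2\geq 1$. Next I would triangulate so that $f$ is simplicial with respect to some triangulations of $S^n$ and $S^m$, and build a cone-like structure: take a PL cone $c*f(S^n)$ from a generic point $c$; this is an immersed (not embedded) disc, and the goal is to \emph{engulf} its singularities inside a PL ball. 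This is where Zeeman's Engulfing Theorem enters: given a compact polyhedron $X$ in the interior of a PL manifold $Q$ with $X$ collapsible (or with the pair suitably connected) and the dimension/codimension conditions satisfied, there is a PL ball in $Q$ containing $X$ in its interior. Applying engulfing to the singular set of the cone, and then pushing along the resulting ball, one isotopes $f(S^n)$ onto the boundary of a standard PL ball, i.e.\ onto a standard sphere.

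The main obstacle — and the heart of Zeeman's argument — is establishing the engulfing step with the correct hypotheses, i.e.\ verifying that the connectivity bounds coming from $m-n\geq 3$ are exactly enough to engulf. One must track two separate dimension restrictions: the dimension of the polyhedron being engulfed must be at most $m-3$, and the complement must be $(\dim X)$-connected or so; a careful bookkeeping shows the codimension-$3$ hypothesis is precisely the threshold where both can be arranged simultaneously (this is why, as the paper notes, the smooth analogue has a different — larger — range, since engulfing has no direct smooth counterpart and one must use the Haefliger-style surgery/Whitney trick instead, which fails in the metastable borderline). I expect the remaining steps — general position to compute the connectivity of complements, simplicial approximation, and the final application of the PL isotopy extension theorem to upgrade ``$f(S^n)$ equals a standard sphere setwise'' to ``$f$ is isotopic to the standard embedding'' — to be routine given the standard PL toolkit, though the isotopy extension step does require knowing that any PL homeomorphism of $S^n$ extends (after possibly composing with a reflection, which is itself isotopic to the identity on $S^n$ for $n\geq 1$) to an ambient isotopy, so that the setwise conclusion yields the isotopy-class conclusion and hence $|E^m_{PL}(S^n)|=1$.
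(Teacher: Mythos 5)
The paper does not prove this theorem; it appears only in the survey subsection \S 1.3 as a quoted classical result with a citation to Zeeman's 1963 Annals paper, so there is no in-paper argument to compare yours against. Judged on its own, your sketch is a legitimate outline of one standard route to PL unknotting in codimension at least $3$: the connectivity computation $\pi_i(S^m\setminus f(S^n))=0$ for $i\le m-n-2$ is correct by general position, the criterion ``$f(S^n)$ bounds an embedded PL $(n+1)$-ball, hence is unknotted'' is correct (modulo the slip where you write $D^{m+1}$ for the ambient space instead of $S^m$), and the final isotopy-extension step, including the orientation/reflection caveat, is handled correctly. Two remarks. First, the route you describe (engulf the singular set of a cone on $f(S^n)$, then push across the resulting ball) is the Stallings--Zeeman engulfing proof; the paper's actual reference, Zeeman's \emph{Unknotting combinatorial balls}, proceeds differently, by an induction on ball pairs using ``sunny collapsing'' and piping, so if you intend to reconstruct the cited proof you are reconstructing a different (also valid, also essentially due to Zeeman) one. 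Second, your sketch places the entire mathematical content inside the sentence ``applying engulfing to the singular set of the cone \dots one isotopes $f(S^n)$ onto the boundary of a standard PL ball'': making the singular set (or rather its shadow) collapsible and of dimension $\le m-3$, and arranging the inductive piping so that engulfing actually applies, is the whole difficulty, and your bookkeeping claim that codimension $3$ is ``precisely the threshold'' is asserted rather than verified. As a proposal the strategy is sound; as a proof it is a correctly labelled black box.
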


\begin{thm*}[A. Haefliger]
If $2m\geq 3n+4$ then $|E^m(S^n)|=1.$
\end{thm*}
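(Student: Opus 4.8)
The plan is to show that every smooth embedding $f\colon S^n\to S^m$ is ambient isotopic to the standard embedding $\std\colon S^n\hookrightarrow S^m$. Since $m\ge n+3>n$ we have $\pi_n(S^m)=0$, so $f$ and $\std$ are homotopic \emph{as maps} (both are null-homotopic). It therefore suffices to establish Haefliger's uniqueness principle in the metastable range: if $2m\ge 3n+4$, then any two embeddings $f_0,f_1\colon S^n\to S^m$ that are homotopic as maps are ambient isotopic; applying it with $f_0=f$, $f_1=\std$ gives the theorem. Note that the hypothesis cannot be relaxed here: for $(n,m)=(3,6)$ one has $2m=3n+3$, yet $E^6(S^3)\cong\Z\ne 0$ by Theorem~\ref{thm:Hk} even though every embedding $S^3\to S^6$ is null-homotopic.

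To prove the uniqueness principle I would first pass from homotopy to concordance. Pick a homotopy $H\colon S^n\times I\to S^m$ with $H_0=f_0$, $H_1=f_1$, and form its level-preserving trace
\[
\bar H\colon S^n\times I\to S^m\times I,\qquad \bar H(x,t)=(H(x,t),\,t),
\]
which restricts to the embedding $f_i$ on the slice $S^n\times\{i\}$ for $i=0,1$. Put $\bar H$ in general position rel $S^n\times\partial I$. The inequality $2m\ge 3n+4$ — applied via Whitney's general-position theorem to a map of an $(n+1)$-manifold into an $(m+1)$-manifold — forces the only singularities of $\bar H$ to be transverse double points; hence the double-point locus $\Delta\subset\mathrm{int}(S^n\times I)$ is a closed submanifold of dimension $2(n+1)-(m+1)=2n+1-m\le\tfrac{n-2}{2}$, which is less than $\tfrac12\dim(S^n\times I)$.

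The technical heart is to remove $\Delta$, i.e.\ to deform $\bar H$ rel $S^n\times\partial I$ into an embedding. This is Haefliger's metastable disjunction theorem, the higher-dimensional substitute for the Whitney trick: because $\dim\Delta$ lies below $\tfrac12\dim(S^n\times I)$, the self-intersections of $\bar H$ can be separated by pushing the sheets apart along auxiliary disks, all of which can be chosen embedded, mutually disjoint, and disjoint from $\bar H(S^n\times I)$ precisely because $2m\ge 3n+4$. The resulting embedding $S^n\times I\hookrightarrow S^m\times I$ still restricts to $f_0$ and $f_1$ on the two ends, so $f_0$ and $f_1$ are concordant; since the codimension $m-n$ is at least $3$, Hudson's theorem (concordance implies isotopy in codimension $\ge3$) upgrades this to an ambient isotopy.

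The main obstacle is this last deformation — killing the double-point manifold $\Delta$ of the trace of the homotopy. All of the geometry is concentrated there: one must produce the auxiliary disks, verify that they embed and are disjoint from one another and from $\bar H$, and carry out the moves rel the boundary, and the dimension count $2m\ge 3n+4$ is exactly what makes each of these general-position requirements achievable (and, by Theorem~\ref{thm:Hk}, it is sharp). A more packaged alternative would be to identify $E^m(S^n)$ for $m-n\ge 3$ with a homotopy group of Haefliger's classifying space for concordance classes of embeddings and to observe that this space is highly connected, so that the group vanishes in the metastable range; that route, however, requires the full apparatus of Haefliger's later work.
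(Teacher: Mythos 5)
The paper does not prove this statement: it appears in the survey subsection \S\ref{SS:survey} as a quoted classical theorem of Haefliger, with no argument given, so there is nothing in the paper to compare your proof against step by step. Judged on its own terms, the outer shell of your argument is correct and is the standard one: $\pi_n(S^m)=0$ makes any two embeddings $S^n\to S^m$ homotopic as maps; the level-preserving trace $\bar H$ of a generic homotopy has no triple points (the triple-point dimension $3(n+1)-2(m+1)=3n+1-2m$ is negative under $2m\ge 3n+4$) and a double-point locus of dimension $2n+1-m\le\tfrac{n-2}{2}$ contained in the interior; and Hudson's theorem converts a concordance into an ambient isotopy in codimension $\ge 3$. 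Your sharpness remark about $(n,m)=(3,6)$ is also apt.

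The genuine gap is that the entire content of the theorem sits in the one step you name but do not carry out: deforming $\bar H$ rel $S^n\times\partial I$ into an embedding. For $N=S^n$ the reduction to the ``uniqueness principle'' (homotopic embeddings are isotopic when $2m\ge 3n+4$) is vacuous, since the homotopy hypothesis is automatic; so what you are invoking is a statement essentially equivalent in strength to the one being proved. Moreover, the description ``push the sheets apart along auxiliary disks'' does not literally apply here: the self-intersection locus is a positive-dimensional closed manifold, not a finite set of algebraically cancelling point pairs, so the classical Whitney trick is not available, and the disjunction must instead be achieved either by Haefliger's equivariant deleted-product argument (isovariant maps $N\times N\setminus\Delta\to S^{m-1}$, where the inequality $2m\ge 3n+4$ enters as a connectivity estimate) or by surgery on the double-point manifold. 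Either route is a substantial theorem in its own right, not a general-position observation. As written, your text is an accurate road map that defers its only hard point to the very result it is meant to establish; to be a proof it would need the disjunction step supplied, or an honest citation of Haefliger's metastable theorem as an external input (which is exactly what the paper itself does).
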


As it was said earlier, the sets $E^m(S^n)$ and $E^m_{PL}(S^n)$ have a group structure given by the embedded connected sum operation. The following is a generalisation of Theorem \ref{thm:Hk}.

\begin{thm*}[A. Haefliger]
$E^{3k}(S^{2k-1})\cong \Z$ for $k>0$ even and $E^{3k}(S^{2k-1})\cong \Z_2$ for $k>1$ odd.
\end{thm*}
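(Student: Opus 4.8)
The plan is to follow Haefliger's strategy. Since $3k-(2k-1)=k+1\ge 3$, the set $E^{3k}(S^{2k-1})$ is a group under embedded connected sum (cf. \S\ref{SS:sum}), so it suffices to realise it as the domain of a single self-linking homomorphism, determine the range of that homomorphism, and show it is injective by a surgery argument. Write $n=2k-1$ and $m=3k$, so that $2m=3(n+1)=3n+3$, just one short of the Haefliger--Zeeman unknotting bound $2m\ge 3n+4$; this borderline position is exactly why a nontrivial, but still cyclic, group appears, and (in contrast with the PL case, where Zeeman's theorem gives triviality in all codimensions $\ge 3$) the parity of $k$ will govern whether one gets $\Z$ or $\Z_2$.

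\emph{Seifert surfaces.} First I would show that every embedding $f\colon S^{2k-1}\to S^{3k}$ bounds a compact orientable Seifert manifold $V^{2k}\subset S^{3k}$ with $\partial V=f(S^{2k-1})$: applying Alexander--Lefschetz duality to the complement $X=S^{3k}\setminus{\rm int}\,\nu(f(S^{2k-1}))$ one checks that the boundary map $H_{2k}(X,\partial X)\to H_{2k-1}(\partial X)$ is an isomorphism, so the fundamental class of $f(S^{2k-1})\times\{*\}$ bounds a relative cycle in $X$. By surgery on $V$ performed rel $\partial V$ inside $S^{3k}$ (legitimate in codimension $k\ge 3$ by general position) I would then make $V$ $\ (k-1)$-connected. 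Since $\partial V\cong S^{2k-1}$, Poincar\'e--Lefschetz duality forces $H_k(V)$ to be free of finite rank with \emph{unimodular} $(-1)^k$-symmetric intersection form $\lambda$, and $V$ additionally carries the homotopy data of its normal bundle in $S^{3k}$ (a rank-$k$ bundle over a complex homotopy equivalent to $\bigvee S^k$, equivalently a quadratic refinement of $\lambda$).

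\emph{The invariant and injectivity.} From the pair $(V,\lambda)$ together with the normal data I would build a self-linking invariant $\mu(f)$ --- for instance by pushing ${\rm int}\,V$ into the bounding disc $D^{3k+1}$, intersecting $V$ with a generic parallel copy, and extracting a characteristic number of the resulting lower-dimensional submanifold. A bordism argument (two Seifert surfaces for isotopic $f$, or for $f$ differing from $f'$ by a nullhomotopic modification, glue along their common boundary to a closed submanifold of $S^{3k}\times I$ on which the number vanishes) shows $\mu$ is a well-defined homomorphism. When $k$ is even, $\lambda$ is symmetric and the construction yields an \emph{integer}, normalised so that $\mu$ extends the Haefliger invariant $r$ of Theorem \ref{thm:Hk}; when $k$ is odd, $\lambda$ is alternating and only a $\Z_2$-valued Arf--Kervaire refinement survives. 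For injectivity, assume $\mu(f)=0$ and return to the $(k-1)$-connected $V$: vanishing of $\mu$ is exactly the condition needed to find embedded $k$-spheres in $V$ with trivial normal bundle spanning a split (Lagrangian) summand of $(H_k(V),\lambda)$ --- for $k$ even one cancels hyperbolics and $\langle\pm 1\rangle$'s, for $k$ odd the vanishing Arf invariant lets one halve $H_k(V)$ --- and surgering them kills $H_k(V)$. Then $V$ is a contractible $2k$-manifold with boundary $S^{2k-1}$, hence a disc $D^{2k}$ by the $h$-cobordism theorem ($k\ge 3$); uniqueness of tubular neighbourhoods and isotopy extension then make $f$ standard, so $[f]=0$.

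\emph{Surjectivity, the dichotomy, and the main obstacle.} For surjectivity I would exhibit explicit knots realising a generator --- Haefliger's construction by taking the boundary of a standard plumbing and pushing it off the ambient disc, or the iterated Whitehead-product (``Borromean'') construction --- and check that $\mu$ attains $\pm 1$ (respectively the nonzero element of $\Z_2$). The dichotomy $\Z$ versus $\Z_2$ is then forced by the symmetry type of $\lambda$, in exactly the way the Kervaire--Milnor analysis of $\Theta_n$ splits by parity: a unimodular symmetric form over $\Z$ carries a $\Z$-valued refinement, whereas a unimodular alternating form is hyperbolic and admits only a $\Z_2$-valued one. I expect the genuine difficulty to lie in the injectivity step --- performing the \emph{middle-dimensional} surgery on $V$ while keeping $V$ embedded in $S^{3k}$ rather than merely abstractly, which needs the Whitney trick and careful general position in codimension $k$, and which is precisely where the parity of $k$ produces the two different outcomes. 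The smallest case $k=2$ (which is Theorem \ref{thm:Hk}) requires separate care, since $4$-dimensional surgery is delicate; there one works in the PL category and invokes smoothing theory, or follows Haefliger's original argument directly. Finally it is worth recording the alternative organisation, closer to Haefliger's papers, in which $E^{3k}(S^{2k-1})$ is computed from an exact sequence linking it to homotopy groups of spheres and Stiefel manifolds: the cyclic group then appears as $\pi_{3k}(S^{3k})=\Z$ (respectively a $\Z_2$ stable $1$-stem) modulo a Whitehead-product indeterminacy, and the even/odd split is the familiar one for $[\iota_k,\iota_k]$.
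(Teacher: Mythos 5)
This theorem is quoted in the survey subsection \S\ref{SS:survey} as background, with a pointer to Haefliger's papers (\cite{Ha62sp}, \cite{Ha66}); the paper contains no proof of it, so there is nothing internal to compare your argument against. Judged on its own terms, your outline is the standard Seifert-surface/surgery route to the even case (and is consistent with the exact-sequence computation you mention at the end, which is closer to how \cite{Ha66} actually obtains the full dichotomy), but as written it has genuine gaps precisely at the steps that carry the content of the theorem.

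First, the invariant $\mu$ is never actually defined, and the well-definedness argument you sketch does not close: the na\"{\i}ve characteristic number of a Seifert surface (e.g.\ its signature when $k$ is even) is \emph{not} an isotopy invariant --- different Seifert surfaces for the same knot have different signatures --- so one must add a correction term built from the normal data, and the ``glue two Seifert surfaces to a closed submanifold of $S^{3k}\times I$'' bordism argument then needs Novikov additivity plus an index-theoretic identity to show the corrected quantity agrees; none of this is supplied, and it is exactly where the divisibility that normalises $\mu$ to an isomorphism onto $\Z$ (rather than onto a proper subgroup) comes from. Second, the injectivity step requires \emph{ambient} middle-dimensional surgery: it is not enough to represent a Lagrangian of $(H_k(V),\lambda)$ by embedded $k$-spheres in $V$ with trivial normal bundle in $V$; one must bound them by discs in $S^{3k}$ meeting $V$ only along the spheres and with compatible framings, and the obstructions to this (living in homotopy groups of Stiefel manifolds) are what actually produce the $\Z$ versus $\Z_2$ answer --- you flag this as ``the genuine difficulty'' but do not resolve it, and relatedly the existence of the quadratic refinement in the odd case is asserted rather than derived from the embedding data. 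Finally, the case $k=2$ (which is Theorem \ref{thm:Hk}, the input the present paper actually uses) is explicitly set aside. So the proposal is a reasonable roadmap of the known proof rather than a proof.
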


There is a special embedding $S^{2k-1}\rightarrow S^{3k}$, also called the {\it Haefliger trefoil knot} (see \cite{Ha62sp}), which is a generator of $E^{3k}(S^{2k-1})\cong \Z$ for even $k$. It is not known, however, if the Haefliger trefoil knot is the generator of $E^{3k}(S^{2k-1})\cong \Z_2$ for odd $k$, see \cite{MA_knots}.

Let us now mention a few results on the knotting of manifolds different from spheres. 

For any $n$-connected PL $m$-manifold $N$ (recall that $m\geq n+3$) the set $E^{2m-n}_{PL}(N)$ was classified by J. Vrabec in \cite{V}.

For any smooth connected $4$-manifold $N$ the set $E^7(N)$ was classified only recently and only up to the embedded connected sum action of $E^7(S^4)$ by D. Crowley and A. Skopenkov in \cite{Cr16}. In the sequel \cite{Cr16_1} the authors strengthened this result. In the special case $H_1(N)=0$ a complete classification of $E^7(N)$ was obtained much earlier by J. Bo\'{e}chat and A. Haefliger in \cite{B70}. See also \cite{B70_1} for the generalisation to the case of $E^{6k+1}(N)$, where $N$ is $4k$-dimensional.

Finally, let us get back to links, i.e., isotopy classes of embeddings of manifolds with several connected components.
Denote by $S^n_{(k)}$ the disjoint union of $k$ copies of $S^n$.

\begin{thm*}[A. Haefliger, \cite{Ha66}]
There is an isomorphism
$$E^m(S^n_{(k)})\rightarrow E^m_{PL}(S^n_{(k)})\oplus\underset{i=1}{\overset{k}{\bigoplus}}E^m(S^n).$$
Composition of the isomorphism with the projection to $E^m_{PL}(S^n_{(k)})$ is the forgetful map.
Composition of the isomorphism with the projection to the $i$-th summand of $\bigoplus E^m(S^n)$ is the isotopy class of the $i$-th connected component.
\end{thm*}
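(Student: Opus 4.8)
The plan is to realise the asserted isomorphism as a split short exact sequence of abelian groups. Write $\Phi\colon E^m(S^n_{(k)})\to E^m_{PL}(S^n_{(k)})$ for the forgetful map, $\rho_i\colon E^m(S^n_{(k)})\to E^m(S^n)$ for the map recording the isotopy class of the $i$-th component, and $\rho:=\rho_1\times\dots\times\rho_k$. Let $j\colon\bigoplus_{i=1}^{k}E^m(S^n)\to E^m(S^n_{(k)})$ send $(\kappa_1,\dots,\kappa_k)$ to the link obtained from the standard one by performing, for each $i$, the embedded connected sum of the $i$-th component with a knot of class $\kappa_i$ inside a small ball meeting only that component. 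One first checks, routinely, that $\Phi$, the $\rho_i$ and $j$ are homomorphisms for componentwise embedded connected sum, and that $\rho_i\circ j$ is the projection onto the $i$-th summand; hence $\rho\circ j=\mathrm{id}$, so $j$ is injective and $\rho$ is a retraction of it. By the splitting lemma for abelian groups it then suffices to prove that the sequence
$$\bigoplus_{i=1}^{k}E^m(S^n)\ \xrightarrow{\ j\ }\ E^m(S^n_{(k)})\ \xrightarrow{\ \Phi\ }\ E^m_{PL}(S^n_{(k)})\ \longrightarrow\ 0$$
is exact: the resulting split sequence gives $E^m(S^n_{(k)})\cong E^m_{PL}(S^n_{(k)})\oplus\bigoplus_i E^m(S^n)$, the isomorphism being $\Phi\times\rho$, and the two projections are by construction the forgetful map and the $\rho_i$.

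Exactness at $E^m_{PL}(S^n_{(k)})$ is the surjectivity of $\Phi$: since $m-n\ge 3$, a PL link is locally flat (each component is PL unknotted by Zeeman's theorem), and locally flat PL sphere links in this codimension are PL isotopic to smooth ones by standard smoothing theory, so every PL class lifts. Exactness at $\bigoplus_i E^m(S^n)$ is the injectivity of $j$, already noted. The inclusion $\operatorname{im}j\subseteq\ker\Phi$ is immediate from $|E^m_{PL}(S^n)|=1$: connect-summing a knot supported in a small ball does not change the PL isotopy class, so $\Phi\circ j=0$.

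The substance of the proof is the reverse inclusion $\ker\Phi\subseteq\operatorname{im}j$, equivalently the geometric lemma that a smooth link $f$ which is PL isotopic to the standard embedding is smoothly isotopic to $j\bigl(\rho_1(f),\dots,\rho_k(f)\bigr)$. The plan is: apply PL ambient isotopy extension to the PL isotopy to obtain pairwise disjoint PL embedded $(n{+}1)$-disks $\Delta_i\subset S^m$ with $\partial\Delta_i=f(S^n_i)$ and smooth near the boundary; then smooth each $\Delta_i$ rel a collar of $\partial\Delta_i$ — this succeeds outside a small ball $B_i$ about an interior point, the residual knotting inside $B_i$ being a smooth local knot whose class in $E^m(S^n)$ is $\rho_i(f)$ (the annular remainder of $\Delta_i$ realises a concordance, in the complement of the other components, between $f(S^n_i)$ and that local knot); the smoothed disks then exhibit $f$, up to a smooth concordance, as the standard link with a local knot $\rho_i(f)$ summed into each component $i$, i.e.\ as $j\bigl(\rho_1(f),\dots,\rho_k(f)\bigr)$; finally invoke concordance-implies-isotopy in codimension $\ge 3$ to upgrade this to a smooth isotopy.

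I expect the main obstacle to be exactly this last step, and inside it the two claims that (a) smoothing a locally flat PL $(n{+}1)$-disk rel its smooth boundary collar can be carried out away from one small ball, with the residual obstruction a well-defined class in $E^m(S^n)$, and (b) that residual class equals $\rho_i(f)$; the disjointness of the $\Delta_i$ is what lets all $k$ components be straightened simultaneously without interference. Everything else — the splitting lemma, the homomorphism properties, Zeeman's PL unknotting, smoothability of locally flat PL sphere links in codimension $\ge 3$, and concordance-implies-isotopy — is either formal or standard.
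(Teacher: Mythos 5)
The paper does not prove this statement: it is quoted from Haefliger's \emph{Enlacements de sph\`eres} in the survey subsection, so there is no in-paper proof to compare yours against. The closest analogue in the paper is Lemma \ref{lem:fg} (the corresponding statement for $M_1\sqcup M_2$ in $S^6$), whose parts (3) and (4) play the roles of your surjectivity of $\Phi$ and of $\ker\Phi\subseteq\operatorname{im}j$. Your architecture --- split short exact sequence, the restriction map $\rho$ as a retraction of the local-knotting map $j$, then the splitting lemma --- is sound, and your treatment of the key inclusion $\ker\Phi\subseteq\operatorname{im}j$ (smooth the disjoint PL Seifert disks away from one interior point each, read off the residual local knot, identify it with $\rho_i(f)$ via the annulus-as-concordance observation, and finish with Hudson's concordance-implies-isotopy) is a legitimate geometric argument. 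It differs from the paper's technique for Lemma \ref{lem:fg}(4), where one instead smooths the PL \emph{concordance} $F:N\times I\to S^6\times I$ between the two smooth embeddings and reads the local-knot correction off the single top smoothing obstruction in $H^{n+1}(N\times I,N\times\partial I;E^m(S^n))$; that variant is slightly cleaner, since the obstruction group is manifestly $\bigoplus_i E^m(S^n)$ and no residual-knot identification is needed.

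The one step I would not accept as written is the surjectivity of $\Phi$. You justify it by ``locally flat PL sphere links in this codimension are PL isotopic to smooth ones by standard smoothing theory,'' but the smoothing obstructions for an $n$-manifold in codimension $q$ live in $H^{i+1}(\,\cdot\,;E^{i+q}(S^i))$, and for $S^n_{(k)}$ the only surviving group is $H^{n}(S^n_{(k)};E^{m-1}(S^{n-1}))$, whose coefficient group need not vanish (for instance $E^{6}(S^{3})\cong\Z$ is the relevant group for PL links $S^4_{(k)}\subset S^7$). So, unlike the $3$-manifold case of Lemma \ref{lem:fg}(3), where $E^3(S^0)=E^4(S^1)=E^5(S^2)=0$ kills everything, surjectivity here is not a formal consequence of vanishing obstruction groups: you need either Haefliger's actual argument (his exact sequence computes $E^m_{PL}(S^n_{(k)})$ and exhibits smooth representatives of its generators) or a genuine proof that the top obstruction can always be arranged to vanish for sphere links. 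The remaining steps of your outline are correct, or repairable at the level of detail you give.
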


In other words, in codimension at least $3$ smooth and PL links of spheres differ only by smooth knotting of each connected component.

For $1\leq i,j\leq k$, $i\neq j$, let 
$$\lambda_{ij}:E^m_{PL}(S^n_{(k)})\rightarrow \pi_n(S^{m-n-1})$$
be the (generalized) {\it linking coefficient} of the $i$-th and the $j$-th connected components, i.e., the homotopy class of $i$-th component in the compliment to the $j$-th component. The map $\lambda_{ij}$ is analogously defined in the smooth category  (in the special case $k=2$, $n=3$, $m=6$, we denote $\lambda_{12}$ and $\lambda_{21}$ simply as $\lambda_1$ and $\lambda_2$, respectively, throughout the rest of the paper).

\begin{thm*}[A. Haefliger, \cite{Ha66}]
The collection of pairwise linking coefficients is bijective for $2m\geq3n+4$ and $E^m_{PL}(S^n_{(k)})$.
\end{thm*}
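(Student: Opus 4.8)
This is Haefliger's theorem \cite{Ha66}, and I would prove it along the following lines. Write $\Lambda:=\prod_{1\le i<j\le k}\lambda_{ij}\colon E^m_{PL}(S^n_{(k)})\to\bigoplus_{1\le i<j\le k}\pi_n(S^{m-n-1})$ for the map given by the pairwise linking coefficients; in the range $2m\ge 3n+4$ the coefficient $\lambda_{ji}$ carries no information beyond $\lambda_{ij}$, so one entry per unordered pair suffices. Throughout I use two external facts. First, Zeeman's theorem $|E^m_{PL}(S^n)|=1$, valid since $2m\ge 3n+4$ forces $m-n\ge 3$; consequently any PL $n$-sphere lying in a ball of $S^m$ is unknotted, and a PL link whose components lie in pairwise disjoint balls is trivial. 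Second, the metastable PL embedding and isotopy theory of Haefliger, Weber, Irwin and Hudson: $2m\ge 3n+4$ gives $2m>3(n+1)$, so maps of an $n$-complex into an $m$-manifold may be made embeddings by general position, and the equivalent inequality $2(m+1)\ge 3(n+2)$ gives the corresponding statements for $S^n\times I$ in an $(m+1)$-manifold. Finally, componentwise embedded connected sum makes $E^m_{PL}(S^n_{(k)})$ a group on which each $\lambda_{ij}$ is a homomorphism, so $\Lambda$ is a homomorphism; it therefore suffices to prove $\Lambda$ is surjective and has trivial kernel.

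\emph{Surjectivity.} Since $\Lambda$ is a homomorphism, it is enough to realize, for each pair $\{a,b\}$ with $a<b$ and each $\xi\in\pi_n(S^{m-n-1})$, a link whose coefficient at $\{a,b\}$ is $\xi$ and whose remaining coefficients all vanish. Choose pairwise disjoint PL balls $B$ and $B_c$ (one for each $c\notin\{a,b\}$) in $S^m$; place each component $c\notin\{a,b\}$ standardly in $B_c$, and inside $B$ place a two-component PL link with linking coefficient $\xi$ -- such a model exists because, with the $b$-component standard, its complement inside $B$ has $n$-th homotopy group $\pi_n(S^{m-n-1})$ in the metastable range, and $\xi$ is realized there by a PL embedded $S^n$ by general position. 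In the resulting $k$-component link the coefficient at $\{a,b\}$ is $\xi$, and the coefficient at any other pair vanishes because each component of that pair bounds a PL disk in the ball containing it, a ball disjoint from the other component. Hence $\Lambda$ is onto.

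\emph{Triviality of $\ker\Lambda$.} Suppose $\Lambda(f)=0$; I show $f$ is trivial by induction on $k$, the case $k=1$ being Zeeman's theorem. The restriction $f|_{S^n_2\sqcup\cdots\sqcup S^n_k}$ still has vanishing coefficients, so by induction it is trivial, and after an ambient isotopy of $S^m$ we may assume $f(S^n_2),\dots,f(S^n_k)$ lie standardly in pairwise disjoint balls $B_2,\dots,B_k$ while $f(S^n_1)$ is a PL sphere in $C:=S^m\setminus\bigsqcup_{j\ge 2}f(S^n_j)$. As the complement of a trivial $(k-1)$-component link, $C$ is simply connected and, by Alexander duality, has free reduced homology $\Z^{k-1}$ in degree $m-n-1$, $\Z^{k-2}$ in degree $m-1$, and nothing else; hence $C$ has the homotopy type of a complex obtained from $\bigvee_{k-1}S^{m-n-1}$ by attaching $k-2$ cells of dimension $m-1$, and since those extra cells have dimension $m-1>n+1$ we get $\pi_n(C)\cong\pi_n\bigl(\bigvee_{k-1}S^{m-n-1}\bigr)$. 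Because $2m\ge 3n+4$ is precisely the inequality $n\le 2(m-n-1)-2$, Hilton's theorem kills all the Whitehead-product summands and yields $\pi_n(C)\cong\bigoplus_{j=2}^k\pi_n(S^{m-n-1})$, with $j$-th coordinate the class of a map in $S^m\setminus f(S^n_j)$, i.e.\ $\lambda_{1j}$. Since all $\lambda_{1j}(f)=0$, the sphere $f(S^n_1)$ is null-homotopic in $C$. Now a null-homotopic PL $n$-sphere of codimension $\ge 3$ in the $m$-manifold $C$, with $2m\ge 3n+4$, is concordant in $C$ into a ball: the null-homotopy $S^n\times I\to C$ is thickened to a PL concordance $S^n\times I\hookrightarrow C\times I$ -- fixed on $S^n\times\{0\}$, ending at a tiny standard sphere inside a ball of $C\times\{1\}$ -- by the relative metastable embedding theorem, applicable since $2(m+1)\ge 3(n+2)$ and $C$ is $(m-n-2)$-connected with $m-n-2\ge 2n+1-m$; then Hudson's theorem, that PL concordance in codimension $\ge 3$ implies PL isotopy, turns it into an ambient isotopy of $C$ pushing $f(S^n_1)$ into a ball $B_1$ disjoint from $B_2,\dots,B_k$. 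Now all $k$ components lie in pairwise disjoint balls, so $f$ is trivial; this completes the induction, hence $\ker\Lambda=0$, and (with cancellation in the group $E^m_{PL}(S^n_{(k)})$) $\Lambda$ is injective, hence bijective.

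\emph{Where the real work lies.} The two non-formal inputs are exactly the metastable computation $\pi_n\bigl(S^m\setminus\bigsqcup_j f(S^n_j)\bigr)\cong\bigoplus_j\pi_n(S^{m-n-1})$ -- equivalently, that the separate linking coefficients jointly detect null-homotopy of a component in the complement of the others -- and the relative unknotting statement that a null-homotopic PL sphere of codimension $\ge 3$ in the metastable range is isotopic into a ball. Both rest on Haefliger--Weber embedding theory together with Hudson's concordance--isotopy theorem, and both use $2m\ge 3n+4$ essentially: at the boundary value $2m=3n+3$ the splitting already breaks, which is precisely the origin of the parity condition $a\equiv b\pmod{2}$ appearing in Theorem~\ref{thm:Hpl}. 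I would expect the first of these -- controlling the homotopy type of the complement of several disjoint spheres -- to be the main obstacle; once it is available, the realization step and the inductive assembly are routine.
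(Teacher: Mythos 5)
This statement is part of the background survey in \S\ref{SS:survey}: the paper quotes it from Haefliger's \emph{Enlacements de sph\`eres} and gives no proof, so there is no ``paper's argument'' to compare against. Your sketch is, as far as I can check, a correct outline of the standard proof, and it is essentially the classical Haefliger--Zeeman argument: additivity reduces bijectivity to surjectivity plus triviality of the kernel; surjectivity comes from Irwin's embedding theorem applied to the complement of a standard sphere; injectivity comes from the Hilton-type computation $\pi_n\bigl(S^m\setminus\bigsqcup_j f(S^n_j)\bigr)\cong\bigoplus_j\pi_n(S^{m-n-1})$ (valid exactly because $2m\ge 3n+4$ kills the Whitehead-product summands and puts $\pi_n(S^{m-n-1})$ in the stable range) followed by the metastable unknotting theorem for a null-homotopic sphere, plus induction on the number of components. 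You also correctly identify that $2m\ge 3n+4$ is used in three separate places and that its failure at $2m=3n+3$ is what produces the parity condition in Theorem~\ref{thm:Hpl}.

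One step deserves more care than you give it: the assertion that $\Lambda$ is a homomorphism. Additivity of $\lambda_{ij}$ in the \emph{first} variable is formal (Lemma~\ref{lem:Lass}), but additivity in the second variable is false in general --- the remark after Lemma~\ref{lem:Lass} gives the Borromean/Whitehead counterexample precisely at $2m=3n+3$ --- and the componentwise sum changes both components at once. So ``$\Lambda$ is a homomorphism'' is itself a metastable-range statement, proved by the same $\pi_n$-of-the-complement computation you defer to the injectivity step (the defect is a Whitehead-product term that Hilton's theorem kills when $2m\ge 3n+4$). This is not a gap in substance, since the needed computation appears later in your argument, but as written the logical order is backwards: you use the homomorphism property before establishing the only nontrivial input to it. A minor further point: your reduction to one coefficient per unordered pair via $\lambda_{ji}=\pm\lambda_{ij}$ is correct in this range (again because $\pi_n(S^{m-n-1})$ is stable there), but it should be stated as part of the theorem being proved rather than assumed, since the cited statement is about the full collection $\{\lambda_{ij}\}_{i\ne j}$.
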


\begin{thm*}[A. Haefliger, \cite{Ha62}]
When $k\geq2$, $k\neq 3,7$ the homomorphism
$$\lambda_{12}\oplus\lambda_{21}:E^{3k}_{PL}(S^{2k-1}\sqcup S^{2k-1})\rightarrow \pi_{2k-1}(S^k)\oplus \pi_{2k-1}(S^k)$$
is injective and its image is $\{(a,b):\Sigma a=\Sigma b\}$.
\end{thm*}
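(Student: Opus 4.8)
The plan follows Haefliger's approach to links of spheres: reduce to a link with one standard component, reinterpret the classification as an embedding problem in the complement of that component (a space homotopy equivalent to a sphere), and feed this into the homotopy-theoretic analysis of embeddings just outside the metastable range.

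\textbf{Step 1 (standardise the second component).} Since $2k-1\geq 3$ and the codimension $3k-(2k-1)=k+1\geq 3$, Zeeman's theorem gives $|E^{3k}_{PL}(S^{2k-1})|=1$, so both components of any PL link $L=L_1\sqcup L_2\colon S^{2k-1}\sqcup S^{2k-1}\to S^{3k}$ are PL unknotted; hence $L$ is isotopic to a link whose second component is a fixed standard sphere $S_0\subset S^{3k}$. The complement $C:=S^{3k}\setminus S_0$ is PL homeomorphic to $S^k\times\R^{2k}$ and deformation retracts onto a meridian $S^k$. I would first invoke the standard fact that a link with second component $S_0$ is determined up to isotopy by the PL isotopy class of $L_1$ regarded as an embedding into $C$, modulo the action of ambient isotopies of $S^{3k}$ fixing $S_0$ (harmless here because $S_0$ is unknotted). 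This reduces the problem to the study of $\pi_0\,\mathrm{Emb}_{PL}(S^{2k-1},C)$ with its two linking projections to $\pi_{2k-1}(S^k)$: under the identification, $\lambda_{12}$ is the composite of $L_1$ with $C\to S^k$, and $\lambda_{21}$ is the class of $S_0$ in the complement of $L_1$.

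\textbf{Step 2 (an exact sequence for the link group).} We are just outside the metastable range $2\dim C\geq 3\dim S^{2k-1}+4$ (indeed $2\cdot 3k=3(2k-1)+3$), so the Haefliger--Weber description of embeddings via $\Z/2$-equivariant maps of deleted products, applied to $C\simeq S^k$, acquires a single first secondary obstruction. Organising this, I expect the analysis to package into an exact sequence of the general shape
\[
\pi_{2k}(S^{k+1})\longrightarrow E^{3k}_{PL}(S^{2k-1}\sqcup S^{2k-1})\xrightarrow{\ \lambda_{12}\oplus\lambda_{21}\ }\pi_{2k-1}(S^k)\oplus\pi_{2k-1}(S^k)\xrightarrow{\ \Sigma\oplus(-\Sigma)\ }\pi_{2k}(S^{k+1}),
\]
with $\Sigma$ the Freudenthal suspension. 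Exactness at the third term would give $\mathrm{im}(\lambda_{12}\oplus\lambda_{21})=\ker(\Sigma\oplus(-\Sigma))=\{(a,b):\Sigma a=\Sigma b\}$, and exactness at the second term would reduce injectivity to vanishing of the left-hand map. The input I would most want to prove by hand is the relation $\Sigma\lambda_{12}=\Sigma\lambda_{21}$, i.e. that $\lambda_{12}-\lambda_{21}$ lies in the subgroup generated by the Whitehead square $[\iota_k,\iota_k]$: this comes from analysing the common complement $W:=S^{3k}\setminus(L_1\cup L_2)$, which through the relevant range has the homotopy type of $S^k\vee S^k$ (the two meridians), so that by Hilton's theorem the only extra summand of $\pi_{2k-1}(S^k\vee S^k)$ is $\langle[\iota_k,\iota_k]\rangle$, which the $EHP$ sequence identifies with $\ker(\Sigma\colon\pi_{2k-1}(S^k)\to\pi_{2k}(S^{k+1}))$; comparing the classes of the push-offs of $L_1$ and $L_2$ in $W$ then yields the relation.

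\textbf{Step 3 (realisation and injectivity, concretely).} For surjectivity onto $\{(a,b):\Sigma a=\Sigma b\}$ I would combine two constructions. First, given $a\in\pi_{2k-1}(S^k)$ represented by $h\colon S^{2k-1}\to S^k$, the graph $\{(h(x),\iota(x)):x\in S^{2k-1}\}\subset S^k\times\R^{2k}=C$ (with $\iota\colon S^{2k-1}\hookrightarrow\R^{2k}$ the standard round embedding) is an embedded first component whose link with $S_0$ has $\lambda_{12}=a$, and a computation shows its $\lambda_{21}$ differs from $a$ only by a multiple of $[\iota_k,\iota_k]$. Second, splicing a standard local clasp modelled on $[\iota_k,\iota_k]$ into $L_1$ inside a small ball adds an arbitrary multiple of $[\iota_k,\iota_k]$ to $\lambda_{21}$ while fixing $\lambda_{12}$ (and symmetrically for the other coordinate); composing realises every admissible pair. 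For injectivity, as $\lambda_{12}\oplus\lambda_{21}$ is a homomorphism of abelian groups it suffices to show $\lambda_{12}(L)=\lambda_{21}(L)=0$ forces $[L]=0$: the vanishing of $\lambda_{12}$ makes $L_1$ nullhomotopic in $C$, and, when $k\neq 1,3,7$, the vanishing of $\lambda_{21}$ kills the remaining obstruction to isotoping $L_1$ into a ball of $C$ disjoint from $S_0$; then $L$ is unlinked, hence trivial by Zeeman.

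\textbf{Main obstacle.} The heart of the argument is the deleted-product bookkeeping of Step 2 (and the $\lambda_{21}$-computation in Step 3): being one short of the metastable range, the Haefliger--Weber dictionary is no longer an isomorphism, and one must isolate the single secondary obstruction, check that no further ones survive, and assemble the exact sequence. Everything is governed by the Whitehead square $[\iota_k,\iota_k]\in\pi_{2k-1}(S^k)$; in particular the left-hand map of the sequence vanishes — equivalently $\lambda_{12}\oplus\lambda_{21}$ is injective — exactly when $[\iota_k,\iota_k]\neq 0$, i.e. when $S^k$ is not an $H$-space, which by Adams' solution of the Hopf invariant one problem fails precisely for $k\in\{1,3,7\}$. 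Since $k\geq 2$, this forces the exclusion of $k=3,7$.
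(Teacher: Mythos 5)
This theorem is not proved in the paper: it is quoted from Haefliger \cite{Ha62} in the survey subsection and used only as a black box (via its $k=2$ specialization, Theorems \ref{thm:H} and \ref{thm:Hpl}), so there is no in-paper proof to compare you against; I can only assess your argument on its own. Your global picture is right — Zeeman unknotting of the components, reduction to embeddings of $S^{2k-1}$ into the complement $C\simeq S^k\times\R^{2k}$, the relation $\Sigma\lambda_{12}=\Sigma\lambda_{21}$, realization by graphs plus Whitehead-link clasps, and the identification of the exceptional dimensions with the Hopf-invariant-one dimensions. The surjectivity half (first part of Step 3) is essentially complete once the relation $\Sigma\lambda_{12}=\Sigma\lambda_{21}$ is established.

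The injectivity half, however, has a genuine gap, and the exact sequence you propose to carry it is demonstrably not exact at the middle term in the exceptional cases, which means it cannot be the right bookkeeping device and you cannot read the dichotomy off its left-hand map. Concretely, for $k=3$ take the generalized Whitehead link $A\sqcup(B\#C)$ built from the higher Borromean rings (cf.\ the remark after Lemma \ref{lem:Lass} in this paper): it has $\lambda_{12}=\pm[\iota_k,\iota_k]=0$ and $\lambda_{21}=0$, yet it and its iterates are pairwise non-isotopic, being distinguished by the $\pi_{2k-1}(S^{2k-1})\cong\Z$ Hilton summand of $\pi_{2k-1}(S^k\vee S^k)$ generated by the \emph{mixed} Whitehead product $[\iota_1,\iota_2]$ — note this summand is $\Z$ for every $k$, not $\langle[\iota_k,\iota_k]\rangle$ as you write, and the conflation hides exactly the $\Z$ that survives when $S^k$ is an $H$-space. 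So $\ker(\lambda_{12}\oplus\lambda_{21})$ contains $\Z$ for $k=3$, whereas your sequence would force it to be a quotient of $\pi_6(S^4)\cong\Z/2$. Since you give no derivation of the sequence, the crucial assertion for $k\neq 3,7$ — that $\lambda_{12}=\lambda_{21}=0$ allows $L_1$ to be isotoped into a ball disjoint from $S_0$ — remains unproved; it is the entire content of Haefliger's theorem. What has to be shown is that the $[\iota_1,\iota_2]$-coefficient of $[L_1]$ in the link complement is well defined only modulo an indeterminacy that is all of $\Z$ precisely when $[\iota_k,\iota_k]\neq 0$, and that the vanishing of all invariants can then be promoted from a homotopy to an ambient isotopy one step outside the metastable range (Whitney trick/engulfing). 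Neither step is in your write-up, and $\lambda_{21}$ — an invariant of the position of $L_2$ in the complement of $L_1$ — is not on its face the compression obstruction for $L_1$ in $C$, so even the statement of your final reduction needs an argument.
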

Combining this theorem ($k=2$) with some of the other theorems above one gets Theorem \ref{thm:H}, i.e., a classification of $E^6(S^3\sqcup S^3)$.

All the results we mentioned so far were either
\begin{itemize}
\item in the metastable range $2m\geq 3n+4$,
\item or on links of (homology) spheres,
\item or on connected manifolds.
\end{itemize}
Therefore, Theorem \ref{thm:main} is the first embeddings classification result (that we are aware of) which falls into none of those three categories.

\smallskip
\subsection{Definition of embedded connected sum $\#$.}
\label{SS:sum}
Let $f:M_1\rightarrow S^6$ and $g:S^3\rightarrow S^6$ be embeddings. Take representatives $f'\in [f]$ and $g'\in [g]$ such that the images of $f'$ and $g'$ lie in disjoint balls. Connect the images of $f'$ and $g'$ by a thin tube along an arc. The isotopy class of the obtained embedding is called an {\it embedded connected sum} of $f$ and $g$ and is denoted by $[f]\#[g]$. The independence on the choice of the representatives, the arc, and the tube follows by an argument analogous to \cite[Standardization Lemma, case $(p,q,m)=(0,3,6)$]{Sk15}.

For embeddings $f:M_1\sqcup M_2\rightarrow S^6$ and $g:S^3\sqcup S^3\rightarrow S^6$ their {\it component-wise} embedded connected sum is defined analogously and is also denoted by $[f]\#[g]$, see Fig.\ref{f:_connected_sum}.

	\begin{figure}[H]
	\begin{center}
	\includegraphics{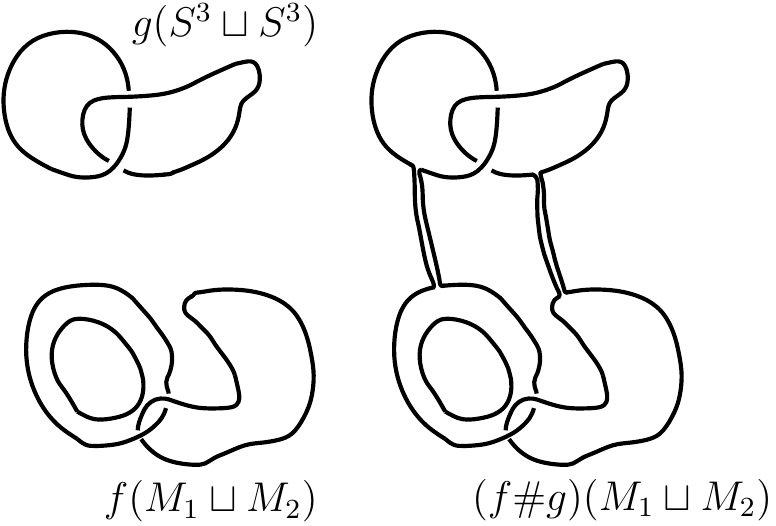}
	\caption{The componentwise embedded connected sum $\#$.}
	\label{f:_connected_sum}
	\end{center}
	\end{figure}

The described operation $\#$ defines a group structure on $E^6(S^3)$ (or $E^6(S^3\sqcup S^3)$) and an action of $E^6(S^3)$ (or $E^6(S^3\sqcup S^3)$) on $E^6(M_1)$ (or $E^6(M_1\sqcup M_2)$).

\smallskip
\subsection{Definition of linked embedded connected sum $\#_1$, $\#_2$.}
\label{SS:lk_sum}
Let $f:M_1\sqcup M_2\rightarrow S^6$ and $g:S^3\rightarrow S^6$ be embeddings with disjoint images. For $k\in\{1,2\}$ connect $f(M_k)$ with $g(S^3)$ by a thin tube along an arc. Denote the obtained embedding $M_1\sqcup M_2\rightarrow S^6$ by $f\#_k g$. It is called a {\it linked embedded connect sum} of $f$ and $g$. Clearly, the embedding $f\#_k g$ depends on the choice of the arc and the tube, but we drop them from the notation. See Fig.\ref{f:_linked_connected_sum}.

	\begin{figure}[H]
	\begin{center}
	\includegraphics{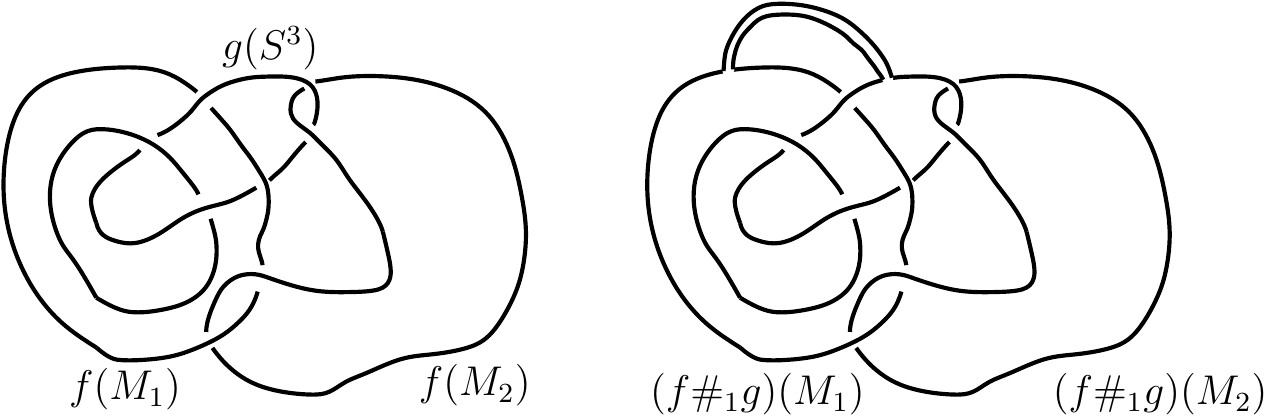}
	\caption{The linked embedded connected sum $\#_1$.}
	\label{f:_linked_connected_sum}
	\end{center}
	\end{figure}

For the fixed embeddings $f$ and $g$ the isotopy class $[f\#_k g]$ is well defined, i.e., it does not depend on the choice of the arc or the tube.
This can be proved analogously to \cite[Standardization Lemma, case $(p,q,m)=(0,3,6)$]{Sk15} (the independence on the choice of the arc also easily follows from the fact that the images of $f$ and $g$ have codimension greater than $2$).

\smallskip
\subsection{Definition of the Whitney invariants $W$ and $\K_k$.}
\label{SS:whitney}
Let $N$ be a closed connected orientable $3$-manifold. Our definition of the Whitney invariant $W:E^6(N)\rightarrow H_1(N)$ is equivalent to the one given in \cite{Sk08}.

Let $f,f':N\rightarrow S^6$ be embeddings. Consider a general position homotopy $F:N\times I\rightarrow S^6\times I$ between $f$ and $f'$. The Whitney invariant of the pair $(f,f')$ is the homology class
$$W(f,f'):=[\{x\in N\times I:|F^{-1}Fx|\geq 2 \}]\in H_1(N\times I)=H_1(N)$$
which can be defined as in \cite{SkSurvey}.

To define $W$ for a single embedding (as opposed to a pair $(f,f')$ of embeddings) we need to choose some ``base embedding''. Manifold $N$ is orientable, so it embeds into $S^5$, see \cite{Hi61}. Let $f^0_N:N\rightarrow S^6$ be an embedding with the image in $S^5\subset S^6$. For any $f:N\rightarrow S^6$ denote $$W(f):=W(f^0_N,f).$$

We choose $f^0_{M_1}$ and $f^0_{M_2}$ so that their images lie in disjoint $6$-balls. Define $$f^0:M_1\sqcup M_2\rightarrow S^6\text{\quad by the formula\quad} f^0=f^0_{M_1}\sqcup f^0_{M_2}.$$

Recall that for $k\in\{1,2\}$ and for an embedding $f:M_1\sqcup M_2\rightarrow S^6$ we earlier defined
$$W_k:E^6(M_1\sqcup M_2)\rightarrow H_1(M_k) \text{\quad by the formula\quad} W_k(f)=W(f|_{M_k}).$$

Let us now define $\K_1$ and $\K_2$. Let $f,f':M_1\sqcup M_2\rightarrow S^6$ be embeddings. Consider a general position homotopy $F:(M_1\sqcup M_2)\times I\rightarrow S^6\times I$ between $f$ and $f'$. The Whitney invariants $\K_1$ and $\K_2$ of the pair $(f,f')$ are the homology classes
\begin{itemize}
\item[] $\K_1(f,f'):=[(F|_{M_1\times I})^{-1}(F(M_1\times I)\cap F(M_2\times I))]\in H_1(M_1),$
\item[] $\K_2(f,f'):=[(F|_{M_2\times I})^{-1}(F(M_1\times I)\cap F(M_2\times I))]\in H_1(M_2).$
\end{itemize}
For $k\in\{1,2\}$ and any $f:M_1\sqcup M_2\rightarrow S^6$ denote
$$\K_k(f):=\K_k({f^0}, f)\in H_1(M_k).$$

\smallskip
\subsection{Proof of part {\bf (I)} of Theorem \ref{thm:main}.}
The following claim is essentially proved (but not explicitly stated) in \cite[``Construction of an arbitrary embedding $N\rightarrow \R^6$ from a fixed embedding $g:N\rightarrow \R^5$'']{Sk08}. For the readers convenience we present (a very similar) proof here. In the proof an later in the text we use the standard notation $V_{m,n}$ for the Stiefel manifold of $n$-frames in $\R^m$. All the framings (resp. frames) in the text are {\it normal} framings (resp. frames) compatible with orientation (in the case of framings).

\begin{clm}
\label{clm:wsurj}
Let $f:M_1\sqcup M_2\rightarrow S^6$ be an embedding and $a\in H_1(M_1)$ a homology class. Then there is an embedding $g:D^4\rightarrow S^6$ such that 
\begin{itemize}
\item $g(S^3)\cap {\rm Im}(f)=\emptyset,$
\item $g(D^4)\cap f(M_2)=\emptyset,$
\item $[(f|_{M_1})^{-1}g(D^4)]=a.$
\end{itemize}
\end{clm}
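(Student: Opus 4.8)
The plan is to realize the homology class $a \in H_1(M_1)$ by an embedded circle in $M_1$, push a small normal disk-bundle off of $f(M_1)$ and then off of $f(M_2)$, and finally cap off the resulting $3$-sphere by a $4$-disk in the complement of $\mathrm{Im}(f)$. I will carry this out in the following order.

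First I would represent $a$ by a smoothly embedded oriented circle $C \subset M_1$ (possible since $\dim M_1 = 3$ and any $1$-dimensional homology class of an orientable $3$-manifold is represented by an embedded curve). Since $M_1$ is orientable, the normal bundle of $C$ in $M_1$ is trivial, so a tubular neighborhood of $f(C)$ in $f(M_1)$ looks like $C \times D^2$. Next I would look at the normal bundle of $f(M_1)$ in $S^6$, which is a rank-$3$ bundle; restricted over $C \simeq S^1$ it is trivial. Combining, a neighborhood of $f(C)$ in $S^6$ is identified with $C \times D^2 \times D^3$, and inside it the "first-coordinate" sub-disk-bundle $C \times D^2 \times \{0\}$ is a $D^2$-bundle over $C$, i.e. an annulus; thickening this to a $4$-dimensional piece and pushing it slightly in the normal $D^3$-direction yields an embedded $D^4$-family meeting $f(M_1)$ in exactly $C$. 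Concretely, $g(S^3)$ is obtained by taking $\partial$ of a $4$-disk built from (a) a push-off annulus $C \times D^2$ displaced off $f(M_1)$ in a normal direction, together with (b) a "cap" disk. The condition $[(f|_{M_1})^{-1} g(D^4)] = a$ will hold by construction because $g(D^4)$ meets $f(M_1)$ transversally precisely along $C$.

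The two disjointness conditions are then arranged by general position. The set $g(S^3)$ has dimension $3$ and $\mathrm{Im}(f)$ has dimension $3$ in $S^6$, so generically they are disjoint, and one can isotope $g$ (rel the part of $\partial g(D^4)$ near $f(C)$, which we arranged to hug $f(M_1)$ from a normal direction and therefore be disjoint from $\mathrm{Im}(f)$ itself) to achieve $g(S^3) \cap \mathrm{Im}(f) = \emptyset$. For the second condition, $g(D^4)$ has dimension $4$ and $f(M_2)$ has dimension $3$, so generically they meet in a finite set of points in $S^6$; but $f(M_2)$ is disjoint from $f(M_1) \supset f(C)$, so these intersection points lie in the interior and away from the "collar" where $g$ is pinned down, and each such point can be removed by dragging a finger of $g(D^4)$ along an arc in the $4$-disk to its boundary $g(S^3)$ and then past $f(M_2)$ (possible since $\mathrm{codim}\, f(M_2) = 3 \geq 3$, so the complement of $f(M_2)$ in $S^6$ is simply connected and any such finger-move is unobstructed). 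This uses only low-dimensional general position and Whitney-type finger moves, no hard machinery.

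The main obstacle I anticipate is the careful bookkeeping in the first step: making sure that the $D^4$ one builds from the normal framing data of $f(C) \subset f(M_1) \subset S^6$ is genuinely embedded (not just immersed) and that its intersection with $f(M_1)$ is exactly $C$ with the correct sign, so that the Whitney-invariant-type condition $[(f|_{M_1})^{-1}g(D^4)] = a$ comes out on the nose rather than up to some correction term. This is where the trivializations of the two normal bundles over $S^1$ must be chosen compatibly and where one must check that pushing off in the $D^3$-direction does not create spurious intersections with $f(M_1)$ elsewhere. Once the local model near $f(C)$ is set up correctly, the rest is routine general position in codimension $\geq 3$.
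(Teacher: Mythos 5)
Your overall strategy is the same as the paper's (realize $a$ by an embedded circle, assemble a $4$-disk from normal-framing data over that circle plus a cap, and read off the intersection with $f(M_1)$), but the execution has concrete errors. First, both of your general-position counts are wrong. Two $3$-dimensional submanifolds of $S^6$ are \emph{not} generically disjoint: since $3+3=6$, they generically meet in a finite set of points, so ``$g(S^3)\cap \mathrm{Im}(f)=\emptyset$ generically'' is false as stated; killing those points needs a Whitney-trick argument, and an after-the-fact isotopy of $g$ risks changing the class $[(f|_{M_1})^{-1}g(D^4)]$, which you have already pinned down. Likewise $g(D^4)$ and $f(M_2)$ have $4+3=7>6$, so their generic intersection is a $1$-manifold, not a finite set of points, and your finger-move argument does not apply in the form you describe. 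Second, the ``cap disk'' is exactly where the content of the proof lives, and you never construct it: you need a $2$-disk $\Delta$ with $\partial\Delta=f(a)$ and $\mathrm{Int}\Delta$ disjoint from $f(M_1\sqcup M_2)$ --- this is the step where general position genuinely works, because $2+3<6$ --- together with an extension of a normal $2$-frame of $f(a)$ over $\Delta$ (the obstruction lies in $\pi_1(V_{4,2})=0$), which is what lets you thicken $\Delta$ to the $D^4$. Because the core of the construction is $2$-dimensional, a small thickening is automatically disjoint from $f(M_2)$ and has boundary disjoint from $\mathrm{Im}(f)$, so no corrective isotopy is needed at all; this is how the paper avoids the problems above.

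There are also smaller slips that suggest the local model is not yet nailed down: $C\times D^2$ is a solid torus, not an annulus, and thickening it gives $S^1\times D^3$ (with boundary $S^1\times S^2$), not a $D^4$; the $D^4$ with boundary $S^3$ only appears after you glue the normal disk bundle over $C$ to the thickened cap along $S^1\times D^2$. You correctly anticipate that the ``careful bookkeeping in the first step'' is the main obstacle, but that bookkeeping --- the existence of $\Delta$, the framing extension over it, and the correct codimension counts --- is the proof, and it is missing here.
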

\begin{proof}
Represent $a$ by an oriented circle in $M_1$ and denote the circle by the same letter. Consider a normal framing $\alpha$ of $f(a)$ in $f(M_1)$. Extend it to a normal framing $\alpha,\beta$ of $f(a)$ in $S^6$, where $\beta$ is normal to $f(M_1)$. The extension exists because $f(a)$ is unknotted in $S^6$ and so the obstruction to the existence of the extension is in $\pi_1(V_{5,2})=0$.

By general position there is a $2$-disk $\Delta$ in $S^6$ such that
\begin{itemize}
\item $\partial \Delta = f(a)$,
\item ${\rm Int}\Delta\cap f(M_1\sqcup M_2)=\emptyset,$
\item the first vectors of $\beta$ ``looks'' inside of $\Delta$. 
\end{itemize} 

Denote by $\beta'$ the normal $2$-frame of $f(a)$ made out of the last two vectors of $\beta$. Extend $\beta'$ to a normal $2$-frame of $\Delta$. The extension exists because the obstruction to its existence is in $\pi_1(V_{4,2})=0$. The vectors of $\beta'$ on $\Delta$ plus the vectors of $\beta$ on $\partial \Delta=f(a)$ give us an embedding $g:D^4\rightarrow D^6$ which is as required.
\end{proof}

\begin{proof}[Proof of part {\bf (I)} of Theorem \ref{thm:main}]
We need to prove that $\WL$ is surjective.

Take any element $a'\in H_1(M_1)$ and any embedding $f:M_1\sqcup M_2\rightarrow S^6$. Denote $a:=a'-W_1(f)$. Let $g:D^4\rightarrow S^6$ be an embedding given by Claim \ref{clm:wsurj}.

Consider the embedding $f':=f\#_1(g|_{S^3})$. There is a homotopy between $f'$ and $f$ contracting $g(S^3)$ along the disk $g(D^4)$. By the definition of the Whitney invariants and by the construction of $g$, we have $W_1(f')=W_1(f)+a=a'$, and $W_2(f')=W_2(f)$, $\K_1(f')=\K_1(f)$, $\K_2(f')=\K_2(f)$. So, we can change the value of the Whitney invariant $W_1$ of an embedding to any desired value $a'$ without changing the other three Whitney invariants.

Similarly to the previous paragraph (take $f'':=f\#_2(g|_{S^3})$ instead of $f'$) we can change the value of the Whitney invariant $L_1$ of an embedding to any desired value $a'$ without changing the other three Whitney invariants.

Similarly to previous two paragraphs we can also change $W_2$ and $\K_2$ in the same manner. So, $\WL$ is surjective, because there exists at least one embedding $M_1\sqcup M_2\rightarrow S^6$ (for instance take $f^0$).
\end{proof}

\smallskip
\subsection{Definition of the linking coefficients $\li$ and $\lii$ and their relation to the Haefliger invariant $r$.}
\label{SS:lambda}
Let $g:S^3_1\sqcup S^3_2\rightarrow S^6$ be an embedding, where $S^3_1$ and $S^3_2$ are two copies of $S^3$. Choose an oriented disk $D^3_g\subset S^6$ intersecting $g(S^3_2)$ transversally at a single point of positive sign. Identify $H_2(S^6\setminus gS^3_2)$ with $\mathbb Z$ by identifying $[\partial D^3_g]\in H_2(S^6\setminus gS^3_2)$ with $1\in{\mathbb Z}$. Identify $H_2(S^2)$ with $\Z$ by choosing an orientation of $S^2$. Choose a homotopy equivalence $h:S^6\setminus gS^3_2\rightarrow S^2$ which induces the identity isomorphism in $H_2$. Define the first linking coefficient by the formula
$$\li(g):=[hg|_{S^3_1}]\in \pi_3(S^2)={\mathbb Z},$$
where identification $\pi_3(S^2)={\mathbb Z}$ identifies the homotopy class of the Hopf map with $1$. All the orientation preserving homotopy equivalences $S^2\rightarrow S^2$ are homotopic to each other, so $\li$ is well-defined.

The definition of the second linking coefficient $\lii$ is analogous and is obtained by the exchange of the components,
$$\lii(g):=\li(g'),$$
where $g':S^3_1\sqcup S^3_2\rightarrow S^6$ is such that $g'|_{S^3_2}=g|_{S^3_1}$ and $g'|_{S^3_1}=g|_{S^3_2}$.

Let $A,B:S^3\rightarrow S^6$ be embeddings with disjoint images. For brevity denote
$$\lx(A,B):=\li(A\sqcup B).$$
Informally, $\lx(A,B)$ is the homotopy class of $A$ in the compliment to $B(S^3)$.

The following lemma easily follows from the definition of $\lx$.
\begin{lem}
\label{lem:Lass}
Let $A,B,C:S^3\rightarrow S^6$ be embeddings with pairwise disjoint images. Then 
$$\lx(A\#B,C)=\lx(A,C)+\lx(A,B).$$
\end{lem}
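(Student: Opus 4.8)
The plan is to reinterpret the first linking coefficient $\lx(\,\cdot\,,C)$ as a homotopy class in the complement of $C(S^3)$ and then to invoke the additivity of homotopy classes under ambient connected sum. Write $Y:=S^6\setminus C(S^3)$. As recalled in the definition of $\li$ in \S\ref{SS:lambda}, there is a homotopy equivalence $h_C:Y\rightarrow S^2$ inducing the identity on $H_2$; in particular $Y$ is simply connected, and since all orientation preserving homotopy equivalences $S^2\rightarrow S^2$ are homotopic, the value of $\lx(\,\cdot\,,C)$ does not depend on the choice of $h_C$, so I fix one such $h_C$ and use it throughout. Regarding $A$, $B$, and $A\#B$ as maps $S^3\rightarrow Y$ — their images miss $C(S^3)$ by hypothesis, once the connecting tube of $A\#B$ is arranged to miss it — the definition of $\lx$ reads
$$\lx(A,C)=(h_C)_*[A],\qquad \lx(B,C)=(h_C)_*[B],\qquad \lx(A\#B,C)=(h_C)_*[A\#B]$$
in $\pi_3(S^2)\cong\Z$, where $[A],[B],[A\#B]\in\pi_3(Y)$. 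Since $(h_C)_*:\pi_3(Y)\xrightarrow{\ \cong\ }\pi_3(S^2)$, it then suffices to prove $[A\#B]=[A]+[B]$ in $\pi_3(Y)$.

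To establish this, I would first fix representatives of $A$, $B$, $C$ with images in pairwise disjoint balls and realize $A\#B$ via a thin tube joining the balls of $A$ and $B$ along an arc; because $C(S^3)$ has codimension $3$, general position lets us choose this arc and tube disjoint from $C(S^3)$, and by \S\ref{SS:sum} the isotopy class $[A\#B]$ is independent of these choices. Thus $A\#B$ is an ambient connected sum performed entirely inside $Y$. Now I would invoke the standard fact that in a simply connected ambient space the homotopy class of an ambient connected sum of two embedded spheres is the sum of their homotopy classes: collapsing the connecting tube (equivalently, the separating $2$-sphere of the connected sum) homotopes $A\#B$ into the composite $S^3\xrightarrow{\ \text{pinch}\ }S^3\vee S^3\xrightarrow{\ A\vee B\ }Y$, whose class is $[A]+[B]$; the arc underlying the tube contributes nothing because $\pi_1(Y)=0$. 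Applying $(h_C)_*$ yields the asserted identity $\lx(A\#B,C)=\lx(A,C)+\lx(B,C)$.

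The step I expect to require the most care is this last one. One must verify that the pinch description is compatible with the sign conventions built into the definition of $\lx$ — the meridian disk $D^3_g$ of $C$ oriented so that $[\partial D^3_g]\mapsto 1\in H_2(Y)$, and the Hopf map $\mapsto 1\in\pi_3(S^2)$ — so that the connected sum contributes $+[B]$ rather than $-[B]$, and that the collapsing homotopy is genuinely supported in a ball disjoint from $C(S^3)$. Both are routine, so the only real work is writing the reduction of the embedded connected sum to the addition in $\pi_3(Y)$ cleanly. An alternative, more hands-on route represents $\lx(\,\cdot\,,C)$ by the Hopf invariant of $h_C\circ(\,\cdot\,)$ — equivalently, by the linking number in $S^3$ of the preimages under $A$ of two disjoint Seifert $4$-manifolds for $C(S^3)$ — which makes the additivity transparent (the preimages split along the two hemispheres of $A\#B$), at the cost of more transversality bookkeeping.
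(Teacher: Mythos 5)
Your argument is correct in substance and is exactly the argument the paper leaves implicit: the paper gives no proof of Lemma \ref{lem:Lass}, asserting only that it ``easily follows from the definition of $\lx$'', and the intended content is precisely your reduction to additivity in $\pi_3(S^6\setminus C(S^3))$ under tube sum in a simply connected complement, transported through the homotopy equivalence $h_C$. Two points. First, you prove $\lx(A\#B,C)=\lx(A,C)+\lx(B,C)$, whereas the statement as printed reads $\lx(A,C)+\lx(A,B)$; the printed version is a typo (it is false in general, and every subsequent application --- e.g.\ in the proofs of Calculation Lemma \ref{lem:calc} and Lemma \ref{lem:Linking'} --- invokes the identity in your form), so you have proved the statement that is actually meant and used. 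Second, delete the opening normalization ``fix representatives of $A$, $B$, $C$ with images in pairwise disjoint balls'': when the three components are genuinely linked this is unachievable (it would force all pairwise linking coefficients to vanish and trivialize the lemma), and your argument never uses it --- all you need is that the given embeddings have pairwise disjoint images and that the arc and tube forming $A\#B$ are chosen, by general position in codimension $3$, to miss $C(S^3)$, after which the pinch-map computation of $[A\#B]=[A]+[B]$ in $\pi_3(Y)$ goes through verbatim.
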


\begin{rem}
Note that $\lx(A,B\# C)$ is not necessarily equal to $\lx(A,B)+\lx(A,C)$ even if the images of $B$ and $C$ lie in pairwise disjoint $6$-balls. As an example one can take Borromean rings $A,B,C:S^3\rightarrow S^6$. Then $A\sqcup B\#C:S^3\sqcup S^3\rightarrow S^6$ is the Whitehead link with $\lx(A,B\# C)=2\neq 0+0=\lx(A,B)+\lx(A,C)$, see \cite[Lemma 2.18]{Sk15}.
\end{rem}

For the proof of the following lemma see \cite[Lemma 2.16]{Sk15}.
\begin{lem}
\label{lem:conn}
Let $A,B:S^3\rightarrow S^6$ be embeddings with disjoint images. Then 
$$r(A\#B)=r(A)+r(B)+\frac{\lx(A,B)+\lx(B,A)}{2}.$$
\end{lem}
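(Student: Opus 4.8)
The plan is to reduce the statement to a computation of the Haefliger invariant of a specific embedding built out of $A$, $B$, and the connecting tube, using the known additivity properties. Recall that $r(A\#B)$ depends on the isotopy class of the embedded connected sum $A\#B:S^3\to S^6$, which by definition is obtained by joining disjoint representatives of $A$ and $B$ by a thin tube. The idea is to realize $A\#B$ as one component of a link in $S^6$ whose other component is a small sphere linking the tube, so that one can compare the Haefliger invariant of $A\#B$ with the Haefliger invariants of $A$ and $B$ separately via Haefliger's exact sequence (or the Haefliger-style surgery description underlying Theorem \ref{thm:H}). Concretely, $A\#B$ bounds (in a Seifert-type sense) a punctured connected sum of Seifert surfaces for $A$ and for $B$, glued along the tube; the Haefliger invariant is read off from the intersection-theoretic data of such a bounding $4$-manifold, and the correction term $\frac{\lx(A,B)+\lx(B,A)}{2}$ is exactly the contribution of the tube, where the two Seifert pieces are forced to intersect with multiplicity measured by the mutual linking.

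First I would fix disjoint representatives $A', B'$ lying in disjoint balls $D_A, D_B\subset S^6$, and an arc $\gamma$ with a tubular neighborhood $\gamma\times D^3$ realizing the connected sum; let $K:=A'\#_\gamma B'$ be the resulting embedded $S^3$. Second, I would choose singular Seifert $4$-manifolds (immersed, or embedded after resolving, oriented $V_A, V_B$ with $\partial V_A = A'(S^3)$, $\partial V_B = B'(S^3)$) and form $V := V_A \,\natural_\gamma\, V_B$, a bounded $4$-manifold with $\partial V = K(S^3)$, by tubing along $\gamma$; generic position makes $V$ a general-position $4$-manifold in $S^6$ whose self-intersection set is a closed $2$-manifold. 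Third, I would invoke the standard formula expressing $r(K)$ as (a normalization of) the algebraic self-intersection / Pontryagin-type number of such a bounding $V$ — this is the formula behind the isomorphism $E^6(S^3)\cong\Z$ in Theorem \ref{thm:Hk}, and it is additive over disjoint unions of Seifert pieces up to the interaction forced by the tube. Fourth, I would compute that interaction: the self-intersections of $V$ split as (self-intersections of $V_A$) $+$ (self-intersections of $V_B$) $+$ (intersections of the pushed-in copy of $V_A$ with $V_B$ along the tube). The first two terms give $r(A)+r(B)$ by definition; the cross term counts, with sign, how $V_A$ meets $V_B$, and since $V_B$ restricted near the tube is a Seifert surface and $A'$ links $B'$ with coefficient $\lx(A,B)$, this intersection number equals $\lx(A,B)$ read one way and $\lx(B,A)$ read the other way. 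Averaging (because the relevant self-intersection count on $V$ symmetrizes the two orderings, or because the framing correction introduces the factor $1/2$ exactly as in Theorem \ref{thm:H} where the image of $\li\times\lii\times r_1\times r_2$ lands in $\wt{\Z^4}$ with the congruence $a\equiv b\pmod 2$) yields the term $\frac{\lx(A,B)+\lx(B,A)}{2}$.

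An alternative, perhaps cleaner route is purely algebraic: apply Theorem \ref{thm:H} to the link $A'\sqcup B':S^3\sqcup S^3\to S^6$, whose invariant is $(\lx(A,B),\lx(B,A),r(A),r(B))\in\wt{\Z^4}$, and to the link $(A'\#B')\sqcup(\text{small unknot})$ obtained by an isotopy that fuses the two components; tracking how $\li,\lii,r_1,r_2$ transform under the band move that performs the fusion, together with Lemma \ref{lem:Lass} controlling the linking coefficients, pins down $r(A\#B)$ uniquely since $\li\times\lii\times r_1\times r_2$ is injective. The band-move analysis is essentially the content of \cite[Lemma 2.16]{Sk15}, to which the excerpt already defers.

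The main obstacle I anticipate is making the ``tube contribution'' bookkeeping rigorous: one must verify that the cross intersection term of the two Seifert pieces is genuinely the mutual linking coefficient (not, say, twice it or zero), that the sign conventions make it symmetric in the right way, and that the factor $1/2$ is forced rather than an artifact — this is precisely where the $a\equiv b\pmod 2$ constraint in $\wt{\Z^4}$ from Theorem \ref{thm:H} must be used to see that $\lx(A,B)+\lx(B,A)$ is even so the formula even makes sense over $\Z$. I would therefore lean on the algebraic route via Theorem \ref{thm:H} and Lemma \ref{lem:Lass} to sidestep the delicate geometric sign-chasing, citing \cite[Lemma 2.16]{Sk15} for the band-move computation.
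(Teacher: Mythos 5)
The paper does not actually prove this lemma: it simply cites \cite[Lemma 2.16]{Sk15}, and your proposal, after sketching two possible routes, ultimately leans on exactly that same citation for the decisive band-move/cross-term computation. So in substance your argument coincides with the paper's; just be aware that the parts you sketch yourself (reading $r$ off a Seifert-type bounding $4$-manifold, and the claim that the tube contributes precisely $\frac{\lx(A,B)+\lx(B,A)}{2}$ with the right signs) are precisely the content of the cited lemma, so the sketch does not constitute an independent proof, and the ``algebraic route'' via Theorem \ref{thm:H} is circular without that geometric input since Theorem \ref{thm:H} says nothing about how the invariants transform under fusion.
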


In particular, $r(A\#B)=r(A)+r(B)$ if $A(S^3)$ and $B(S^3)$ lie in disjoint $6$-balls.

\begin{rem}
The number $\frac{\lx(A,B)+\lx(B,A)}{2}$ is integer by Haefliger Theorem \ref{thm:H}.
\end{rem}

\smallskip
\subsection{Proof of ``PL'' Theorem \ref{thm:mainpl} modulo ``smooth'' Theorem \ref{thm:main}.}
For a piecewise smooth (PS) manifold $N$ denote by $E^m_{PS}(N)$ the set of PS isotopy classes of PS embeddings $N\rightarrow S^m$. The forgetful map $E^m_{PL}(N)\rightarrow E^m_{PS}(N)$ is a bijection, see \cite[\S 2.2]{Ha67}. Therefore, Theorem \ref{thm:mainpl} can be restated in the PS category without any changes. For our convenience we shall prove the PS version of Theorem \ref{thm:mainpl}.

Let $${\rm Fg}:E^6(M_1\sqcup M_2)\rightarrow E^6_{PS}(M_1\sqcup M_2)$$ be the {\it forgetful} map.

\begin{lem}
\label{lem:fg}
The forgetful map ${\rm Fg}$ has the following properties.
\begin{itemize}
\item [(1)] ${\rm Fg}$ preserves the invariants $\li$, $\lii$, and $\WL$.
\item [(2)] ${\rm Fg}$ commutes with $\#$, i.e., ${\rm Fg}([f]\#[g])={\rm Fg}([f])\#{\rm Fg}([g])$ for any $[f]\in E^6(M_1\sqcup M_2)$ and $[g]\in E^6_{PL}(S^3\sqcup S^3)$.
\item [(3)] ${\rm Fg}$ is surjective.
\item [(4)] Suppose that ${\rm Fg}([f'])={\rm Fg}([f])$ for some $[f], [f']\in E^6(M_1\sqcup M_2)$. Then there is $[g]\in E^6(S^3\sqcup S^3)$ such that $[f']=[f]\#[g]$ and that $[g]$ is unlinked, i.e., $\li(g)=\lii(g)=0$.
\end{itemize}
\end{lem}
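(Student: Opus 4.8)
The plan is to verify the four properties in order, leaning heavily on the fact that all the invariants and the operation $\#$ were defined in a way that only refers to general position maps and homology classes, which are category-independent. For property (1), the invariants $\li$, $\lii$ are defined via homotopy classes of maps into $S^2$ and linking disks, and the Whitney invariants $W_k$, $\K_k$ are defined via self-intersection (respectively intersection) sets of general position homotopies; a PS embedding and its image under ${\rm Fg}$ have literally the same underlying continuous map, so one can compute the invariants using a PS general position homotopy, and the resulting homology classes agree. I would spell this out once for $\li$ and once for $W_1$ and remark that the remaining cases are identical by symmetry. Property (2) is immediate from the definition of $\#$ in \S\ref{SS:sum}: the tube-and-arc construction is performed in disjoint balls and the construction commutes with the forgetful map (a PS connected sum of smooth pieces, viewed PS-ly, is the same as taking the PS connected sum); so ${\rm Fg}([f]\#[g])={\rm Fg}([f])\#{\rm Fg}([g])$.

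For property (3), surjectivity of ${\rm Fg}$, I would invoke the surjectivity of $\WL$ in the PS category (the proof of part (I) of Theorem \ref{thm:main} carries over verbatim, as the tube construction and Claim \ref{clm:wsurj} are category-free), together with the fact that once $\WL$ is hit, any PS embedding with the same $\WL$-value is related to a smooth one by the action of $E^6_{PL}(S^3\sqcup S^3)=E^6_{PS}(S^3\sqcup S^3)$ and that group surjects from $E^6(S^3\sqcup S^3)$ by Haefliger's theorems (Theorem \ref{thm:H} and Theorem \ref{thm:Hpl}): indeed $\li\times\lii$ alone parametrizes $E^6_{PS}(S^3\sqcup S^3)$ and every value of $\li\times\lii$ is realized by a smooth link. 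Concretely: given a PS embedding $h$, pick a smooth $f$ with $\WL(f)=\WL(h)$ (possible by PS part (I)); by PS part (II) — which at this point I may only assume in the form of Haefliger's PL classification of sphere links being surjected by the smooth one — there is a PL link $g_0$ with ${\rm Fg}(f)\#g_0=h$; lift $g_0$ to a smooth $g$ with $\li\times\lii$ agreeing, so ${\rm Fg}(f\#g)=h$.

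Property (4) is the main obstacle and the only place where real content enters. Suppose ${\rm Fg}([f'])={\rm Fg}([f])$. Then $f'$ and $f$ are PS-isotopic, hence have equal $\WL$, so by part (II) of Theorem \ref{thm:main} there is a smooth link $[g]\in E^6(S^3\sqcup S^3)$ with $[f']=[f]\#[g]$. It remains to show $[g]$ can be chosen unlinked, i.e., $\li(g)=\lii(g)=0$. The idea is that $\li(g)$ and $\lii(g)$ are themselves detected PS-ly: applying ${\rm Fg}$ to $[f']=[f]\#[g]$ and using properties (1) and (2) gives ${\rm Fg}([f])={\rm Fg}([f])\#{\rm Fg}([g])$ with ${\rm Fg}([g])$ a PS link whose $\li,\lii$ equal those of $g$; since $\li\times\lii$ is injective on $E^6_{PS}(S^3\sqcup S^3)$ by Theorem \ref{thm:Hpl}, it suffices to show that a PS link acting trivially on $[f]$ when only $\li,\lii$ are remembered must be PS-trivial, equivalently $\li(g)=\lii(g)=0$. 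For this I would argue that the $\#$-action of the subgroup $\ker(\li\times\lii)\subset E^6(S^3\sqcup S^3)$ — which by Theorem \ref{thm:H} is exactly the subgroup of unlinked links, parametrized by $r_1\times r_2$ — together with the fact that two smooth links with the same image under ${\rm Fg}$ differ by such an unlinked link (this is the genuinely new input, and is precisely what one proves by comparing a PS isotopy to a smooth homotopy and measuring the discrepancy by Haefliger-type obstructions living in $E^6(S^3\sqcup S^3)$, cf. the smoothing theory behind Haefliger's splitting $E^m(S^n_{(k)})\cong E^m_{PL}(S^n_{(k)})\oplus\bigoplus E^m(S^n)$). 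I expect the careful construction of the unlinked correcting link $[g]$ from a PS isotopy — making sure it acts correctly and has vanishing linking coefficients — to be the technical heart, and I would model it on the smoothing-theory argument underlying the cited Haefliger splitting theorem.
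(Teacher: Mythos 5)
Your parts (1) and (2) agree with the paper, which disposes of them by the same appeal to the definitions. The problems are in (3) and (4). For (3) your argument is circular: you invoke surjectivity of $\WL$ in the PS category together with ``PS part (II)'' (transitivity of the $\#$-action on the $\WL$-preimages of PS embeddings of $M_1\sqcup M_2$), but that is part (II) of Theorem \ref{thm:mainpl}, which the paper deduces \emph{from} Lemma \ref{lem:fg}; Haefliger's classification of PL/PS sphere links gives you $E^6_{PS}(S^3\sqcup S^3)$, not transitivity of its action on $E^6_{PS}(M_1\sqcup M_2)$. The paper instead smooths a PS embedding directly by obstruction theory: the obstructions lie in $H^{i+1}(M_1\sqcup M_2;E^{i+3}(S^i))$ for $i=0,1,2$, and these vanish because $E^3(S^0)=E^4(S^1)=E^5(S^2)=0$.

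For (4), your first step (obtain some smooth $[g]$ with $[f']=[f]\#[g]$ from smooth part (II) of Theorem \ref{thm:main}) is fine, but the reduction you then attempt rests on a false claim: a PS link whose $\#$-action fixes ${\rm Fg}([f])$ need \emph{not} be PS-trivial --- the PS stabilizer ${\rm Stab}_{PL,f}$ is generally nonzero (this is the whole content of part (III) of Theorem \ref{thm:mainpl}, and Corollary \ref{cor:1} exhibits non-unlinked stabilizing links). Applying ${\rm Fg}$ to $[f']=[f]\#[g]$ therefore only yields $(\li,\lii)(g)\in{\rm Stab}_{PL,f}$, not $(\li,\lii)(g)=0$, and the remainder of your sketch never actually produces the unlinked correcting link: the sentence beginning ``For this I would argue\ldots'' has no conclusion, and the ``genuinely new input'' you name (two embeddings with the same ${\rm Fg}$-image differ by an unlinked link) is essentially statement (4) itself. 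The paper's route is different and direct: take a PS isotopy $F$ between $f$ and $f'$; its smoothing obstruction is a single class $a\in H^4((M_1\sqcup M_2)\times I;E^6(S^3))\cong E^6(S^3)\oplus E^6(S^3)$; choose an \emph{unlinked} $g$ with $r_1(g)\oplus r_2(g)=a$, cone it to a PS embedding $G$ of $D^4\sqcup D^4$ in $S^6\times I$, and observe that $F\#G$ is a smoothable PS concordance from $[f]\#[g]$ to $[f']$. The unlinkedness of $g$ is thus built in by construction rather than deduced a posteriori; if you want to keep your a-posteriori strategy you would at minimum have to correct your $g$ by an element of the smooth stabilizer, which requires part (III) of Theorem \ref{thm:main} and an argument that every element of ${\rm Stab}_{PL,f}$ is the $(\li,\lii)$-image of an element of ${\rm Stab}_f$ --- none of which appears in your sketch.
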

\begin{proof}
(1), (2) follow by the definitions of $\li$, $\lii$, $\WL$, and $\#$.

Let us prove (3). The obstruction to smoothing any PS embedding $M_1\sqcup M_2\rightarrow S^6$ lies in groups $H^{i+1}(M_1\sqcup M_2; E^{i+3}(S^i))$ for $i=0,1,2$, see \cite[First paragraph of introduction]{B70_1}, \cite[Proof of Lemma 7]{Hu72}. Since $E^3(S^0)=E^4(S^1)=E^5(S^2)=0$, the obstruction vanishes.

It remains to prove (4). Let $F:(M_1\sqcup M_2)\times I\rightarrow S^6\times I$ be a PS isotopy between $f$ and $f'$. The only obstruction to smoothing $F$ is some cohomology class $a\in H^4((M_1\sqcup M_2)\times I; E^6(S^3))\cong E^6(S^3)\oplus E^6(S^3)$. Choose an unlinked embedding $g:S^3\sqcup S^3\rightarrow S^6\times 0$ whose image is in a $6$-ball disjoint with the image of $F_0$ and such that $r_1(g)\oplus r_2(g)=a$. A PS embedding $G:D^4\sqcup D^4\rightarrow S^6\times I$ is obtained from $g$ by coning over two generic points. Then $F\# G$ is a PS concordance between $[f]\#[g]$ and $[f']$. By construction, $F\# G$ can be smoothed, therefore $[f']=[f]\#[g]$. Cf. \cite[An alternative definition of the Kreck invariant]{Sk08}.
\end{proof}

\begin{proof}[Proof of Theorem \ref{thm:mainpl}]
Part (I) follows from Part (I) of Theorem \ref{thm:main} by (1) and (3).
Part (II) follows from Part (II) of Theorem \ref{thm:main} by (1), (2), and (3).
Part (III) follows from Part (II) of Theorem \ref{thm:main} by (1), (2), (3), and (4).
\end{proof}

\bigskip
\section{Proof of the main theorem modulo lemmas.}
\subsection{Plan of the proof.}
In this section we prove the main theorem modulo Surjectivity Lemma \ref{lem:surj}, Bijectivity Lemma \ref{lem:bij}, Preimage Lemma \ref{lem:pre}, Calculation Lemma \ref{lem:calc}, Linking Lemma \ref{lem:Linking}, and Claim \ref{clm:Wpre}. All of these statements are proved later in the corresponding sections.   

The plan of the proof is explained by the diagram in Fig.\ref{f:diagram}. In this subsection we only give informal explanations. All the new objects and statements mentioned here or in the diagram are rigorously defined or stated later in this section.

We represent $M_1$ as the result of cutting several solid tori from $S^3$ and then pasting them back together by the diffeomorphism exchanging parallels with meridians. By $\wh{M_1}$ we denote the compliment in $S^3$ to the solid tori, i.e., what is left of $S^3$ after cutting the tori and before pasting them back. The definition of $\wh{M_2}$ is analogous.

By $\wh{E}^6(\wh{M_1}\sqcup\wh{M_2})$ we denote the set of fixed on the boundary isotopy classes of proper embeddings $\wh{M_1}\sqcup\wh{M_2}\rightarrow D^6_+$. Given a representative of an element of $\wh{E}^6(\wh{M_1}\sqcup\wh{M_2})$ we can extend it in two different ``standard'' ways to either an embedding $S^3\sqcup S^3\rightarrow S^6$ or an embedding $M_1\sqcup M_2\rightarrow S^6$. These extensions define the maps $\sigma_R$ and $\sigma$ in the diagram.

It turns out that the map $\sigma$ (and $\sigma_R$) is surjective, see the Surjectivity Lemma \ref{lem:surj}. I.e., any embedding $M_1\sqcup M_2\rightarrow S^6$ is isotopic to a so-called ``standardized'' embedding which is ``standard'' on the solid tori and which maps $\wh{M_1}\sqcup\wh{M_2}$ to $D^6_+$. The proof of Surjectivity Lemma \ref{lem:surj} essentially repeats the proof of the first part of the Standardization Lemma in \cite{Sk15} (which is stated in slightly less general case than we require). 

Two isotopic ``standardized'' embeddings are not necessarily isotopic through ``standardized'' embeddings. This means that the map $\sigma$ is not bijective (and that the second part of the Standardization Lemma of \cite{Sk15} fails in the dimensions we are working in). By studying the geometric obstruction to the ``standardization'' of an isotopy between two ``standardized'' embeddings we prove the Preimage Lemma \ref{lem:pre}.

The set $E^6(S^3\sqcup S^3)$ is known and the maps $\sigma$ and $\sigma_R$ are surjective. Therefore we can classify the unknown set $E^6(M_1\sqcup M_2)$ by describing the (not well-defined) ``composition'' $\sigma_R\circ\sigma^{-1}$. This task is accomplished by the Bijectivity, Preimage, and Calculation Lemmas \ref{lem:bij}, \ref{lem:pre}, and \ref{lem:calc}.

	\begin{figure}[H]
	\centering
	\includegraphics{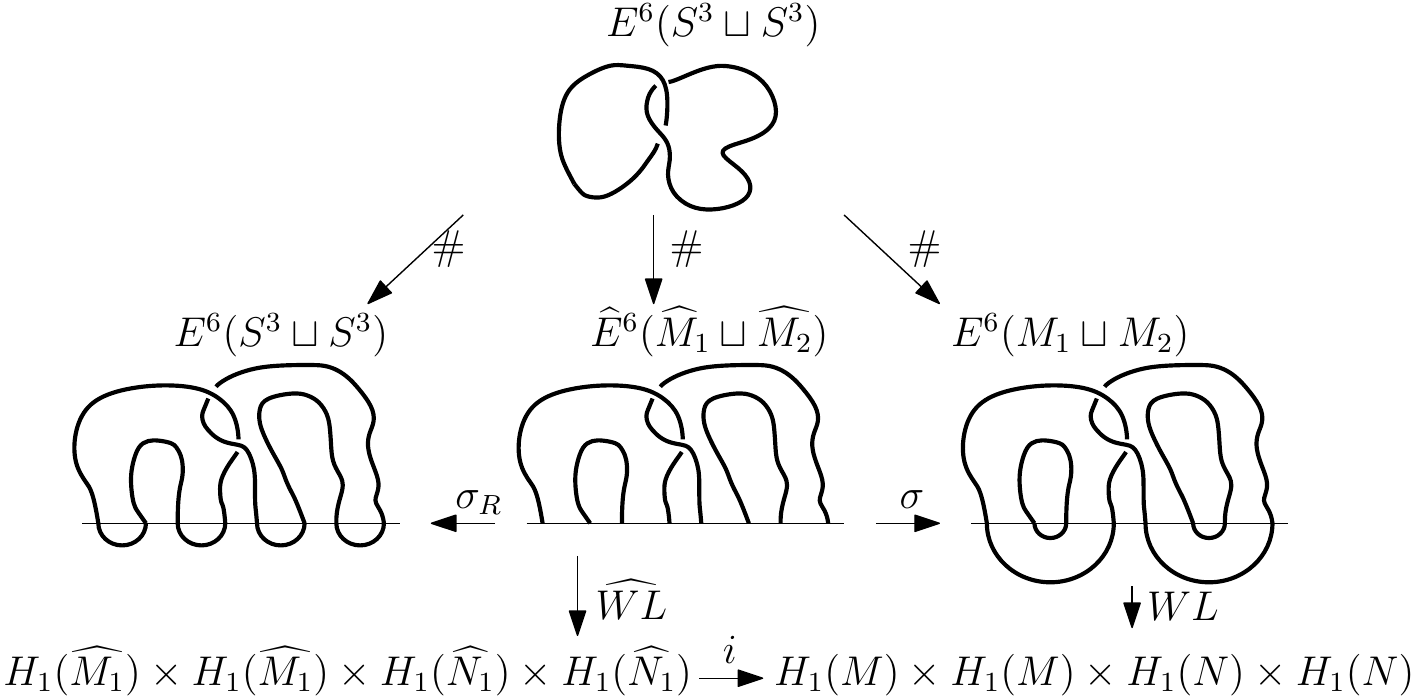}
	\caption{The diagram.}
	\label{f:diagram}
	\end{figure}

\smallskip
\subsection{Definitions of $T_n, P, \wh{M_k},m,R$.} 
In this subsection we represent manifolds $M_1$ and $M_2$ as results of a surgery of $S^3$ on several embedded circles.

For any $n>0$ let $$T_n:=\underbrace{S^1\times D^2\sqcup\ldots\sqcup S^1\times D^2}_n$$ be the disjoint union of $n$ copies of $S^1\times D^2$. 

Let $$R:S^1\times S^1\rightarrow S^1\times S^1$$ be the diffeomorphism exchanging the parallel with the meridian.

By \cite[end of \S 12, beginning of \S 14]{PS} for each $k\in\{1,2\}$ there are $m_k>0$ and an embedding $P_k:T_{m_k}\rightarrow S^3$ such that
\begin{itemize}
\item the restriction of $P_k$ to each of $m_k$ connected components of $T_{m_k}$ is isotopic to the standard embedding $S^1\times D^2\rightarrow S^3$;
\item if we denote $$\wh{M_k}:=\text{\quad the closure of\quad} S^3\setminus P_k(T_{m_k})$$
then 
$$M_k \cong \wh{M_k}\underset{P_k(x)=R(x), x\in \partial T_{m_k}}{\bigcup}T_{m_k},$$
(where ``$\cong$'' is a diffeomorphism).
\end{itemize}
For the rest of the text and for each $k\in\{1,2\}$ we replace $M_k$ with
$$\wh{M_k}\underset{P_k(x)=R(x), x\in \partial T_{m_k}}{\bigcup}T_{m_k},$$
see Fig.\ref{f:surgery}.

	\begin{figure}[H]
	\begin{center}
	\includegraphics{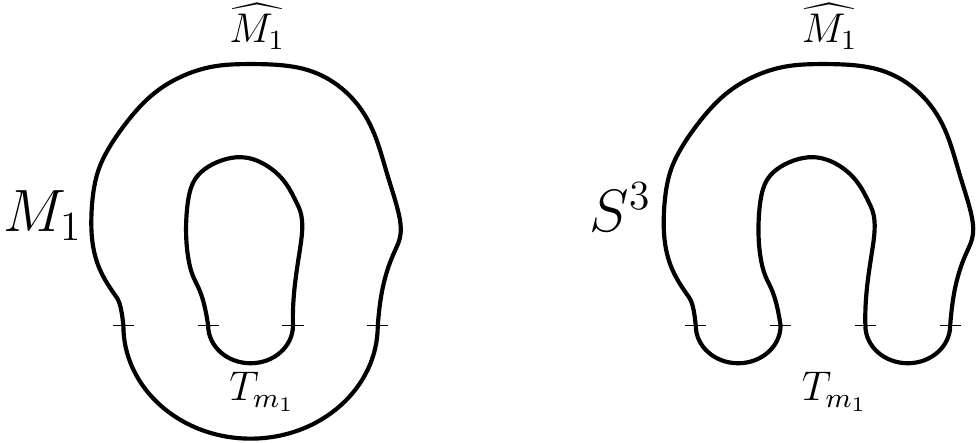}
	\caption{Manfiolds $M_1$ on the left and $S^3$ on the right.}
	\label{f:surgery}
	\end{center}
	\end{figure}
	
Until the end of the text $k\in\{1,2\}$ and $1\leq i\leq m_k$. I.e., all the statements involving $k$ and/or $i$ are given {\it for all} $k\in\{1,2\}$ {\it and} $1\leq i\leq m_k$, unless specifically said otherwise.

\smallskip
\subsection{Definitions of $P_{k,i}, \p_{k,i}, h$.} 
Denote by $P_{k,i}$ the restriction of $P_k$ to the $i$-th connected component.

Fix an orientation of $S^1\times D^2$. Consider the meridian $\m:=*\times S^1\subset S^1\times D^2$ with some orientation. Construct a normal framing of $\m$ in the following way. The first vector of the framing ``looks'' inside the full-torus $S^1\times D^2$, the second vector of the framing is then determined uniquely by the compatibility with orientation. Denote the obtained framed circle by the same letter $\m$.

Define framed circles $\p_{k,i}\subset S^3$ by the formula
$$\p_{k,i} := P_{k,i}R\m\subset S^3.$$


Let $a\subset S^3$ be any framed $1$-submanifold. Shift $a$ slightly along the first vector of its framing and denote the obtained submanifold by $a'$. The {\it Hopf invariant} $h(a)$ of $a$ is defined by the formula  
$$h(a):=\lk(a,a')\in{\mathbb Z}.$$

The following claim easily follows from the definition of $\p_{k,i}$.
\begin{clm}
\label{clm:untwist}
For any $k\in\{1,2\}$ and $1\leq i\leq m_k$ we have $h(\p_{k,i})=0$.
\end{clm}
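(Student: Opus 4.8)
\textbf{Proof plan for Claim \ref{clm:untwist}.}

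The plan is to unwind the definitions and observe that $\p_{k,i}$, being the image of the meridian $\m$ under $P_{k,i}\circ R$, is literally the \emph{boundary of a meridional disk of the solid torus} $P_{k,i}(S^1\times D^2)\subset S^3$ — with a framing inherited from $\m$. First I would recall that $P_{k,i}$ is isotopic to the standard embedding $S^1\times D^2\rightarrow S^3$ (this is the first bullet in the construction of $P_k$), so it suffices to compute $h(\p_{k,i})$ for the standard unknotted solid torus in $S^3$; the Hopf invariant $h$ is an isotopy invariant of framed submanifolds, so nothing is lost. Applying $R$ exchanges meridian and parallel, so $R\m$ is (isotopic in $S^1\times D^2$ to) a parallel $S^1\times *$; but the key point is to track what $R$ does to the \emph{framing}: the first framing vector of $\m$, which pointed ``into'' the solid torus, is carried by $P_{k,i}R$ to a vector field along the parallel that still points into the solid torus rather than, say, winding around it.

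The core computation is then: for the framed curve $\p_{k,i}=P_{k,i}R\m$, push off along the first framing vector to get $\p_{k,i}'$, and evaluate $\lk(\p_{k,i},\p_{k,i}')$. Since the first vector points transversally into the solid torus $P_{k,i}(S^1\times D^2)$, the push-off $\p_{k,i}'$ lies in the \emph{interior} of that solid torus and is isotopic there to the core $P_{k,i}(S^1\times 0)$. On the other hand $\p_{k,i}$ itself is the boundary of a meridional disk $P_{k,i}(S^1\times(\text{radius}))$... no — rather $\p_{k,i}$ bounds the annulus $P_{k,i}(R\m \times [\text{push-off}])$ which one can cap off, in the complement of $\p_{k,i}'$, by the disk in $S^3$ that the \emph{parallel} bounds outside a standard solid torus; since the push-off curve $\p_{k,i}'$ (the core) does not meet that outside region, the linking number is $0$. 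Equivalently, and more cleanly: for the standard solid torus, both $\p_{k,i}$ and its inward push-off $\p_{k,i}'$ are null-homologous in the complement of the other (the complement of the standard solid torus core in $S^3$ is again an open solid torus, and $\p_{k,i}$, being a parallel of the removed torus, is a meridian of that complementary torus, hence bounds there), so the linking number vanishes.

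I expect the only real obstacle to be bookkeeping about orientations and about \emph{which} of the two normal vectors is ``the first vector,'' i.e. making sure the push-off genuinely goes into the solid torus and that I have not accidentally pushed off along a direction that wraps once longitudinally (which would give $h=\pm 1$). This is handled by the explicit construction in \S2.3: the first framing vector of $\m$ is defined to look inside $S^1\times D^2$, $R$ is a diffeomorphism of the boundary torus so it carries this inward-pointing normal data along $R\m$ to inward-pointing normal data along $\p_{k,i}$, and $P_{k,i}$ being an embedding of the solid torus preserves ``inward.'' Once that is pinned down, the linking-number computation is the short homological argument above, and $h(\p_{k,i})=0$ follows; the isotopy-invariance of $h$ then lets one reduce to the standard picture, completing the proof.
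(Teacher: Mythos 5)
Your argument is correct and is exactly the direct verification the paper has in mind (the paper offers no written proof, asserting the claim ``easily follows from the definition''): $\p_{k,i}$ is the product longitude of an unknotted, standardly embedded solid torus, its first framing vector points inward so the push-off is isotopic to the core inside the solid torus, and the longitude bounds a meridian disk of the complementary solid torus, forcing the linking number to vanish. Just tidy up the mid-paragraph false start (``is the boundary of a meridional disk \dots no ---'') before writing it up, since the ``equivalently, and more cleanly'' homological version is the one to keep.
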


\smallskip
\subsection{Definition of the set $\wh{E}^6(\wh{M_1}\sqcup\wh{M_2})$.}
Denote by $D^6_+$ and $D^6_-$ the northern and the southern hemispheres of $S^6$, respectively (the exact choice of the ``north'' and ``south'' poles is not important). 

Let $$s_k:\underbrace{D^2\times D^4\sqcup\ldots\sqcup D^2\times D^4}_{m_k}\rightarrow D^6_-$$ be an embedding such that
\begin{itemize}
\item its restriction to each $*\times D^4$ is isotopic to the standard proper embedding $D^4\rightarrow D^6_-$,
\item there are pairwise disjoint $6$-balls $B_{k,i}\subset D^6_-$ such that the $s_k$-image of the $i$-th connected component lie in $B_{k,i}$.
\end{itemize}
We additionally demand that for every $1\leq i\leq m_1$, $1\leq j\leq m_2$ the balls $B_{1,i}$ and $B_{2,j}$ are disjoint. 

Denote by $B^{\square}_{k,i}$ some tubular neighbourhood of $s_{k,i}(D^2\times D^4)$ in $B_{k,i}$ modulo $s_{k,i}(D^2\times S^3)$. Note that $B^{\square}_{k,i}$ is a manifold with ``corners'' diffeomorphic to $D^2\times D^4$.

We consider $S^1\times D^2$ as a submanidold of $D^2\times D^4$ where the inclusion $S^1\times D^2\subset D^2\times D^4$ is given by the obvious inclusions $S^1=\partial D^2\subset D^2$ and $D^2\subset D^4$.

Denote by $\wh{E}^6(\wh{M_1}\sqcup\wh{M_2})$ the set of isotopy classes fixed on the boundary, of proper embeddings $\wh{f}:\wh{M_1}\sqcup\wh{M_2}\rightarrow D^6_+$ such that $$\wh{f}\circ P_k|_{\partial T_{m_k}} = s_k|_{\partial T_{m_k}}$$ for each $k\in\{1,2\}$.

\smallskip
\subsection{Definition of operations $\sigma$, $\sigma_R$ and the action $\#$.}
For an embedding $\wh{f}:\wh{M_1}\sqcup\wh{M_2}\rightarrow D^6_+$ such that $[\wh{f}]\in \wh{E}^6(\wh{M_1}\sqcup\wh{M_2})$ define
\[ 
\sigma (\wh{f}):M_1\sqcup M_2\rightarrow S^6 \text{\quad by \quad} \sigma(\wh{f})(x):=\begin{cases}
\wh{f}(x) \text{ if } x\in\wh{M_1}\sqcup\wh{M_2}\\
s_k(x) \text{ if } x\in (M_k\setminus \wh{M_k})=T_{m_k}
\end{cases},
\]
see Fig.\ref{f:sigma}.

	\begin{figure}[H] 
	\begin{center} 
	\includegraphics{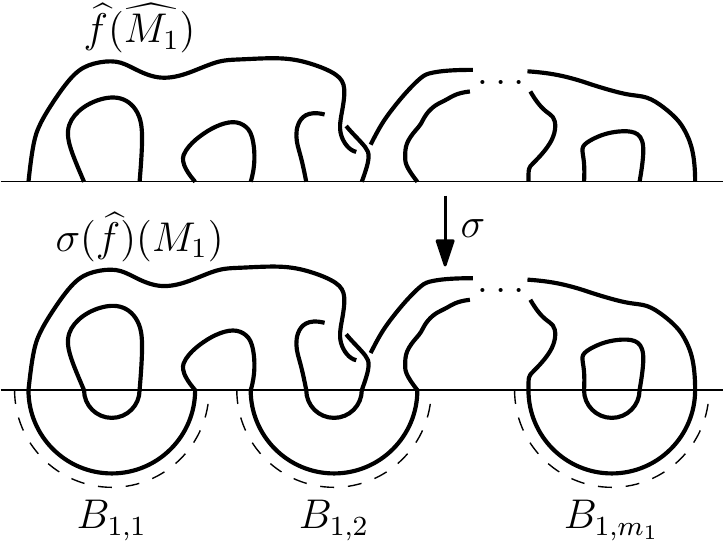} 
	\end{center} 
	\caption{Operation $\sigma$ (only $M_1$ is shown).}
	\label{f:sigma} 
	\end{figure}

Denote by $s_{k,i}$ the restrictions of $s_k$ to the $i$-th connected component.

Let $A_R:D^6_-\rightarrow D^6_-$ be an orientation preserving diffeomorphism such that
\begin{itemize}
\item $A_R(B_{k,i})=B_{k,i}$,
\item $A_R\circ s_{k,i}|_{S^1\times S^1}= s_{k,i}|_{S^1\times S^1}\circ R$.
\end{itemize}
Such a diffeomorphism exists because all embeddings $S^1\times S^1\rightarrow\partial D^6_-$ are isotopic and because smooth isotopies are ambient, see \cite[Theorem $2.1$]{Hu70}.

Denote
$$s_{R,k}:=A_R\circ s_k \text{\quad and\quad} s_{R,k,i}:=A_R\circ s_{k,i}.$$

For an embedding $\wh{f}:\wh{M_1}\sqcup\wh{M_2}\rightarrow D^6_+$ such that $[\wh{f}]\in \wh{E}^6(\wh{M_1}\sqcup\wh{M_2})$ define
\[ 
\sigma_R (\wh{f}):M_1\sqcup M_2\rightarrow S^6 \text{\quad by \quad} \sigma(\wh{f})(x):=\begin{cases}
\wh{f}(x) \text{ if } x\in\wh{M_1}\sqcup\wh{M_2}\\
s_{R,k}(x) \text{ if } x\in (M_k\setminus \wh{M_k})=T_{m_k}
\end{cases},
\]
see Fig.\ref{f:sigma_r}.

	\begin{figure}[H] 
	\begin{center} 
	\includegraphics{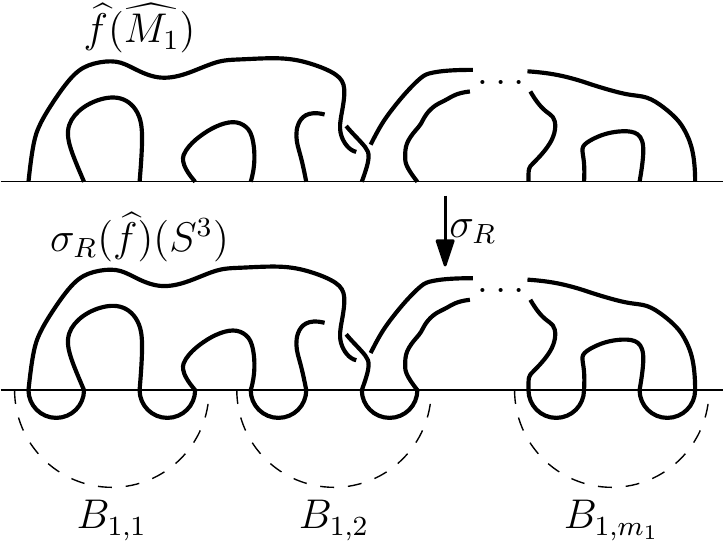} 
	\end{center} 
	\caption{Operation $\sigma_R$ (only $M_1$ is shown).}
	\label{f:sigma_r} 
	\end{figure}
	
Clearly, if $[\wh{f}]=[\wh{f'}]$ for some other embedding $\wh{f'}$, then $[\sigma(\wh{f})]=[\sigma(\wh{f'})]$ and $[\sigma_R(\wh{f})]=[\sigma_R(\wh{f'})]$. Therefore $\sigma$ and $\sigma_R$ 
induce well-defined maps 
$$E^6(S^3\sqcup S^3) \xleftarrow{\sigma_R} \wh{E}^6(\wh{M_1}\sqcup\wh{M_2}) \xrightarrow{\sigma} E^6(M_1\sqcup M_2),$$
which we denote by the same letters. 

Note that in our notation $\sigma(\wh{f})$ is an {\it embedding} while $\sigma([\wh{f}])$ is an {\it isotopy class}.

The group $E^6(S^3\sqcup S^3)$ acts on each of the sets $E^6(S^3\sqcup S^3)$, $\wh{E}^6(\wh{M_1}\sqcup\wh{M_2})$, and $E^6(M_1\sqcup M_2)$ via the component-wise connected sum $\#$. The action on $\wh{E}^6(\wh{M_1}\sqcup\wh{M_2})$ is defined analogously to the action on $E^6(S^3\sqcup S^3)$ or $E^6(M_1\sqcup M_2)$.

The following claim easily follows from the definitions of $\sigma$, $\sigma_R$, and the embedded connected sum action $\#$.
\begin{clm}[$\#$-commutativity]
\label{clm:comm}
The embedded connected sum action $\#$ commutes with $\sigma$ and $\sigma_R$. I.e., for any isotopy classes $[\wh{f}]\in\wh{E}^6(\wh{M_1}\sqcup\wh{M_2})$ and $[g]\in E^6(S^3\sqcup S^3)$ we have $\sigma([\wh{f}]\#[g])=\sigma([\wh{f}])\#[g]$ and $\sigma_R([\wh{f}]\#[g])=\sigma_R([\wh{f}])\#[g]$.
\end{clm}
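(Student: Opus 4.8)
The plan is to realize the embedded connected sum with $g$ and the gluing-in performed by $\sigma$ (resp.\ $\sigma_R$) in disjoint regions of $S^6$, so that for suitably chosen representatives the two operations commute literally, not merely up to isotopy.

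First I would fix a representative $\wh f$ of $[\wh f]$ and a representative $g$ of $[g]$. Since $\wh f$ is a proper embedding into the northern hemisphere $D^6_+$, one can choose a small $6$-ball $B\subset {\rm Int}\, D^6_+$ that is disjoint from ${\rm Im}(\wh f)$ and from a collar of $\partial D^6_+$, together with a representative $g'\in[g]$ whose image lies in $B$. By the definition of the action of $E^6(S^3\sqcup S^3)$ on $\wh E^6(\wh M_1\sqcup\wh M_2)$, the class $[\wh f]\#[g]$ is represented by the embedding $\wh f\# g'$ obtained by joining, for each $k\in\{1,2\}$, the image of $\wh f|_{\wh M_k}$ to the $k$-th sphere component of $g'$ by a thin tube run inside $D^6_+$ and missing $B$; the whole modification can be taken in the interior of $\wh M_k$, away from $P_k(\partial T_{m_k})$, so that $\wh f\# g'$ still satisfies $(\wh f\# g')\circ P_k|_{\partial T_{m_k}}=s_k|_{\partial T_{m_k}}$ and hence $[\wh f\# g']\in\wh E^6(\wh M_1\sqcup\wh M_2)$. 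Note that the ball $B$, the connecting arcs, and the tubes all lie in $D^6_+$.

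Next I would compare the embeddings $\sigma(\wh f\# g')$ and $\sigma(\wh f)\# g'$ of $M_1\sqcup M_2$ directly. By definition $\sigma(\wh f\# g')$ equals $\wh f\# g'$ on $\wh M_1\sqcup\wh M_2$ and equals $s_k$ on $T_{m_k}=M_k\setminus\wh M_k$. To form $\sigma(\wh f)\# g'$ I would use the same ball $B$, the same $g'$, and the same tubes; this is legitimate because $B$ and the tubes lie in $D^6_+$ and are therefore disjoint from $s_k(T_{m_k})\subset D^6_-$. The resulting embedding equals $\sigma(\wh f)$ outside the tube-and-ball region, hence equals $\wh f\# g'$ on $\wh M_1\sqcup\wh M_2$ and equals $s_k$ on $T_{m_k}$. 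Thus $\sigma(\wh f\# g')$ and $\sigma(\wh f)\# g'$ are the very same embedding, and invoking the already-established independence of $\#$ and $\sigma$ on the choice of representatives we get
$$\sigma([\wh f]\#[g])=[\sigma(\wh f\# g')]=[\sigma(\wh f)\# g']=[\sigma(\wh f)]\#[g]=\sigma([\wh f])\#[g].$$
The identity for $\sigma_R$ follows verbatim, replacing $s_k$ by $s_{R,k}=A_R\circ s_k$ throughout: this changes only the piece glued over $T_{m_k}$, which still lies in $D^6_-$ and is still disjoint from the connected-sum region in $D^6_+$.

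I do not expect a genuine obstacle here; the claim is essentially a bookkeeping exercise isolating the support of each operation. The points to be careful about are that $B$ and the connecting tubes can indeed be chosen inside $D^6_+$ disjoint from ${\rm Im}(\wh f)$ and from a boundary collar — immediate by general position, the codimension of the images being larger than $2$ — and that one is entitled to pass from equality of the chosen representatives to equality of isotopy classes, which is precisely the well-definedness of $\#$, $\sigma$, and $\sigma_R$ established just above the claim.
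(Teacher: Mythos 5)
Your argument is correct and is precisely the unpacking the paper intends: it states Claim \ref{clm:comm} without proof, remarking only that it ``easily follows from the definitions of $\sigma$, $\sigma_R$, and the embedded connected sum action $\#$'', and your localization of the connected-sum support in ${\rm Int}\,D^6_+$, disjoint from the tori glued in over $D^6_-$, is exactly that easy argument made explicit. No issues.
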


\begin{lem}[Surjectivity]
\label{lem:surj}
Maps $\sigma$ and $\sigma_R$ are surjective.
\end{lem}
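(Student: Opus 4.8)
The plan is to prove surjectivity of $\sigma$ and $\sigma_R$ by a direct general-position-and-isotopy argument, adapting the first half of the Standardization Lemma of \cite{Sk15} to the present situation (two components, codimension three). I will argue for $\sigma$; the argument for $\sigma_R$ is identical after composing with the diffeomorphism $A_R$ at the very end, since $A_R$ is an ambient diffeomorphism of $D^6_-$ preserving the balls $B_{k,i}$.

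\smallskip
First I would take an arbitrary embedding $f:M_1\sqcup M_2\rightarrow S^6$ and isotope it so that the images $f(P_k(T_{m_k}))$ of the solid tori land, component by component, inside the prescribed balls $B_{k,i}\subset D^6_-$, with $f\circ P_{k,i}$ agreeing with $s_{k,i}$ there. This is possible because each $P_{k,i}$ restricts to the standard embedding $S^1\times D^2\rightarrow S^3$, hence $f\circ P_{k,i}$ is an embedding of $S^1\times D^2$ into $S^6$; its isotopy class is controlled by knotting data that vanishes in these dimensions (a solid torus is $0$-connected of dimension $3$ in a $6$-manifold, and the relevant obstruction groups $E^4(S^1)$, framings in $\pi_1(V_{5,2})=0$, etc. all vanish, cf. the proof of Claim \ref{clm:wsurj}). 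So after an ambient isotopy we may assume $f=s_k$ on the solid tori; the only thing remaining to fix is the position of $f(\wh{M_1}\sqcup\wh{M_2})$.

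\smallskip
Next I would push $f(\wh{M_1}\sqcup\wh{M_2})$ off the southern hemisphere. Since $\wh{M_1}\sqcup\wh{M_2}$ is a compact $3$-manifold with boundary, $\dim=3$, and $\dim D^6_-=6$, general position lets us isotope $f$ (rel the already-standardized solid tori, whose images meet $D^6_-$) so that $f(\wh{M_1}\sqcup\wh{M_2})$ lies in $D^6_+$ except in a collar of its boundary which must stay in $D^6_-$ to match $s_k$. Using the collar and an ambient isotopy supported near $\partial D^6_+$, one then arranges $f(\wh{M_k})\subset D^6_+$ with $f\circ P_k|_{\partial T_{m_k}}=s_k|_{\partial T_{m_k}}$, i.e. $f=\sigma(\wh f)$ for $\wh f:=f|_{\wh{M_1}\sqcup\wh{M_2}}$. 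The key dimension count making the pushing-off possible is that a generic $3$-cycle can be pushed off a $6$-dimensional region since $3+3<6$ would be needed for self-intersections but here we only need to clear the $5$-sphere $\partial D^6_-$ and slide across it, which is codimension $3>0$ in the target — the standard innermost-disk / finger-move cleanup applies.

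\smallskip
The main obstacle I expect is the bookkeeping at the boundary: one must push $\wh{M_k}$ into $D^6_+$ \emph{while keeping the solid-tori parts fixed} and \emph{keeping the matching condition} $\wh f\circ P_k|_{\partial T_{m_k}}=s_k|_{\partial T_{m_k}}$ exactly, so the isotopies must be performed rel a neighborhood of $P_k(\partial T_{m_k})$ and compatibly with the product structure $D^2\times D^4$ near $B^{\square}_{k,i}$. This is precisely the content of the (first part of the) Standardization Lemma in \cite{Sk15}, and I would invoke that argument essentially verbatim, checking only that nothing in it used connectedness of the source or more than two components — it does not, since the components are handled independently and the relevant general-position and ambient-isotopy theorems (e.g. \cite[Theorem 2.1]{Hu70}) are insensitive to the number of components. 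Hence $\sigma$ is surjective, and applying $A_R$ gives surjectivity of $\sigma_R$.
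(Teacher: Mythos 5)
The weak point is your second step. ``Push $f(\wh{M_1}\sqcup\wh{M_2})$ into $D^6_+$ by general position'' is not a general-position statement: general position controls transversality and the dimensions of intersections, not containment in a prescribed hemisphere. Your dimension count (``$3+3<6$ would be needed for self-intersections, but we only need to clear the $5$-sphere\dots'') does not address the actual difficulty, which is that once you have pinned the solid tori to $s_k$ inside the \emph{prescribed} balls $B_{k,i}\subset D^6_-$, the rest of the image may pass through ${\rm Int}\,D^6_-$ in a way that is essential relative to the now-immovable solid tori; clearing it out rel the tori is an engulfing-type problem, not a transversality one. Nothing in your sketch supplies that argument --- in effect the statement you are assuming is the lemma itself, and justifying it directly would require machinery on the level of Claim \ref{clm:istd} and the Preimage Lemma \ref{lem:pre}.

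The paper's proof is organized in the opposite order precisely to avoid this. It does not fix $D^6_-$ first. Instead, starting from the given $f$, it uses genuine general position ($2+3<6$) to find $2$-disks $\Delta_{k,i}$ bounded by the core circles $f(q_{k,i})$ with interiors disjoint from $f(M_1\sqcup M_2)$, thickens them (the obstruction lies in $\pi_1(V_{4,2})=0$) to embeddings $F_{k,i}:D^2\times D^4\to S^6$ meeting $f(M_1\sqcup M_2)$ only in the solid tori, and encloses all the $F_{k,i}$ in a single $6$-ball $B$ whose interior misses $f(\wh{M_1}\sqcup\wh{M_2})$. The ambient isotopy carrying $(B,F_{k,i})$ onto $(D^6_-,s_{k,i})$ then automatically places $f(\wh{M_1}\sqcup\wh{M_2})$ in $D^6_+$: the southern hemisphere is \emph{built around} the solid tori in a position already disjoint from the rest of the manifold, so no pushing-off step is ever needed. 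The disks $\Delta_{k,i}$ --- the one place where general position genuinely enters --- never appear in your argument, and without them (or an engulfing substitute) your step 2 is a real gap. Your step 1 (unknottedness of the disjoint solid tori via codimension $5$ unknotting of circles and $\pi_1(V_{5,2})=0$) is fine, but it is the easy half.
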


\smallskip
\subsection{Definition of the Whitney invariants $\wh{W}_k$, $\wh{\K}_k$ of proper embeddings.}
The definition of 
$$\wh{W}_k,\wh{\K}_k:\wh{E}^6(\wh{M_1}\sqcup\wh{M_2})\rightarrow H_1(\wh{M_k})$$
is analogous to the definition of
$$W_k,\K_k:E^6(M_1\sqcup M_2)\rightarrow H_1(M_k).$$
One needs only to replace ``homotopy'' by ``homotopy relative to the boundary'' and define a ``base embedding'' $\wh{f}^0:\wh{M_1}\sqcup\wh{M_2}\rightarrow D^6_+$. To do the latter we choose some $[\wh{f}^0]\in \wh{E}^6(\wh{M_1}\sqcup\wh{M_2})$ such that $\sigma([\wh{f}^0])=[{f^0}]$. The existence of such $[\wh{f}^0]$ is guaranteed by Surjectivity Lemma \ref{lem:surj}.

The following claim easily follows from the definition of $\wh{\K}_k$.
\begin{clm}
\label{clm:WZeifert}
Take any $k\in\{1,2\}$ and $[\wh{f}]\in\wh{E}^6(\wh{M_1}\sqcup\wh{M_2})$. Let $\Delta_k\subset D^6_+$ be a proper submanifold ``with corners'', $\partial\Delta_k= \wh{f}(\wh{M_k})\cup(\partial\Delta_k\cap\partial D^6_+)$. Suppose that $\Delta_k$ is disjoint with $\wh{f}(\partial\wh{M_{3-k}})\subset\partial D^6_+$. Then
$$\wh{L}_{3-k}(\wh{f})=[(\wh{f}^{-1})\Delta_k]\in H_1(\wh{M_{3-k}}).$$
\end{clm}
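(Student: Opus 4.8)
The plan is to prove the claim by a bordism argument inside the $7$-ball $D^6_+\times I$, comparing the ``double point'' definition of $\wh{\K}_{3-k}$ with the intersection against $\Delta_k$. Write $j:=3-k$ for brevity. Recall that $\wh{\K}_j(\wh f)=\wh{\K}_j(\wh f^0,\wh f)$, and that the latter is computed from a general position homotopy $\wh F\colon(\wh M_1\sqcup\wh M_2)\times I\to D^6_+\times I$ rel $\partial$ from $\wh f^0$ to $\wh f$ as the class in $H_1(\wh M_j\times I)=H_1(\wh M_j)$ of the preimage under $\wh F|_{\wh M_j\times I}$ of $\wh F(\wh M_k\times I)\cap\wh F(\wh M_j\times I)$; and that $[\wh f^{-1}\Delta_k]$ is the class of the closed $1$-submanifold $\wh f|_{\wh M_j}^{-1}\big(\Delta_k\cap\wh f(\wh M_j)\big)\subset\wh M_j$, which lies in the interior of $\wh M_j$ by the disjointness hypothesis. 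The first step is to fix a $4$-submanifold $\Delta_k^0\subset D^6_+$ with corners, $\partial\Delta_k^0=\wh f^0(\wh M_k)\cup(\partial\Delta_k^0\cap\partial D^6_+)$, that is moreover \emph{disjoint} from $\wh f^0(\wh M_j)$. For such a $\Delta_k^0$ the claim with $\wh f$ replaced by $\wh f^0$ is trivial, since $\wh{\K}_j(\wh f^0)=\wh{\K}_j(\wh f^0,\wh f^0)=0$ while $\Delta_k^0\cap\wh f^0(\wh M_j)=\emptyset$. Existence of $\Delta_k^0$: by the choice of $\wh f^0$ and Surjectivity Lemma~\ref{lem:surj}, $\sigma([\wh f^0])=[f^0]$; by construction the two components of $f^0$ have images in $S^5\subset S^6$ and lie in disjoint $6$-balls, so $f^0(M_k)$ bounds a Seifert $4$-manifold in a ball disjoint from $f^0(M_j)$; transporting it back by an ambient isotopy realizing $\sigma([\wh f^0])=[f^0]$ and truncating to $D^6_+$ yields $\Delta_k^0$, with corner behaviour inherited from the standard boundary conditions $s_k|_{\partial T_{m_k}}$.

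The second step is to assemble the closed $4$-cycle
$$Z_k:=\wh F(\wh M_k\times I)\ \cup\ \big(\Delta_k^0\times\{0\}\big)\ \cup\ \big(\Delta_k\times\{1\}\big)\ \cup\ \Sigma_k\ \subset\ D^6_+\times I ,$$
where $\Sigma_k\subset\partial D^6_+\times I$ is a $4$-chain filling the closed $3$-cycle $\big((\partial\Delta_k^0\cap\partial D^6_+)\times\{0\}\big)\cup\wh F(\partial\wh M_k\times I)\cup\big((\partial\Delta_k\cap\partial D^6_+)\times\{1\}\big)$; this closes up because $\wh F$ is rel $\partial$, so $\wh F(\partial\wh M_k\times I)=\wh f^0(\partial\wh M_k)\times I$ joins the two ends. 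A routine general position argument, using that $\Delta_k$, $\Delta_k^0$, and $\wh F(\partial\wh M_k\times I)$ are disjoint from $\wh f(\partial\wh M_j)$, $\wh f^0(\wh M_j)$, and $\wh F(\partial\wh M_j\times I)$ respectively, lets us take $\Sigma_k$ disjoint from $\wh F(\partial\wh M_j\times I)$. Since $H_4(D^6_+\times I)=0$, the cycle $Z_k$ bounds a $5$-chain $\Gamma$, which we take transverse to $\wh F(\wh M_j\times I)$.

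The third step is to intersect everything with $\wh F(\wh M_j\times I)$. After a generic perturbation of $\Delta_k$ rel its boundary, the disjointness arrangements show that $Z_k\cap\wh F(\wh M_j\times I)$ is a \emph{closed} $1$-manifold in the interior of $D^6_+\times I$, equal to the disjoint union of $\wh F(\wh M_k\times I)\cap\wh F(\wh M_j\times I)$ (the locus defining $\wh{\K}_j(\wh f^0,\wh f)$), of $\Delta_k^0\cap\wh f^0(\wh M_j)=\emptyset$, and of $\Delta_k\cap\wh f(\wh M_j)$. Intersecting $\Gamma$ with $\wh F(\wh M_j\times I)$ exhibits this $1$-manifold as the boundary of a $2$-chain inside $\wh F(\wh M_j\times I)\cong\wh M_j\times I$, so it is null-homologous there. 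Pulling back by $\wh F|_{\wh M_j\times I}$, projecting to $\wh M_j$, and matching the orientations the two non-empty pieces receive as parts of one oriented boundary, one obtains $\wh{\K}_j(\wh f)=[\wh f^{-1}\Delta_k]$ in $H_1(\wh M_j)$, which is the claim ($\wh L$ and $\wh{\K}$ being two names for the same invariant).

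I expect the main obstacle to be the bookkeeping the bordism argument hides: producing $\Delta_k^0$ with the correct corner behaviour and disjoint from $\wh f^0(\wh M_j)$ (equivalently, disposing of the base case $\wh f=\wh f^0$), arranging the homotopy and all auxiliary submanifolds to be in general position along $\partial D^6_+\times I$ so that nothing escapes through the corners, and pinning down the orientation conventions so that the final identity holds on the nose and not merely modulo $2$-torsion. These are exactly the routine verifications behind the phrase ``easily follows from the definition''.
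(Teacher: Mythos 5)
Your overall architecture --- reduce to a base case $\wh f^0$ equipped with a reference membrane $\Delta_k^0$ disjoint from $\wh f^0(\wh{M_{3-k}})$, close the track of the homotopy into a $4$-cycle $Z_k$ in $D^6_+\times I$, and read off the identity by intersecting a bounding $5$-chain with $\wh F(\wh{M_{3-k}}\times I)$ --- is the standard way to prove such a statement, and the paper itself offers nothing beyond ``easily follows from the definition''. However, one step that you dismiss as ``a routine general position argument'' is not general position at all, and it is the crux of the claim. In the $6$-manifold $\partial D^6_+\times I$ a $4$-chain $\Sigma_k$ meets the $3$-manifold $T:=\wh F(\partial\wh{M_{3-k}}\times I)=\wh f(\partial\wh{M_{3-k}})\times I$ generically in a $1$-manifold, not in the empty set. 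Making $\Sigma_k$ disjoint from $T$ requires the vanishing of $[\zeta]$ in $H_3\bigl((\partial D^6_+\times I)\setminus T\bigr)\cong H_3\bigl(\partial D^6_+\setminus\wh f(\partial\wh{M_{3-k}})\bigr)\cong H^1\bigl(\wh f(\partial\wh{M_{3-k}})\bigr)\cong\Z^{2m_{3-k}}$ (Alexander duality), i.e.\ the vanishing of the $H_1$-valued linking of the $3$-cycle $(\partial\Delta_k\cap\partial D^6_+)-(\partial\Delta_k^0\cap\partial D^6_+)$ with the boundary tori $\wh f(\partial\wh{M_{3-k}})$ inside $\partial D^6_+\cong S^5$. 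The disjointness facts you invoke only make this obstruction well defined; they do not kill it, and the hypotheses of the Claim do not control it: $\partial\Delta_k\cap\partial D^6_+$ is merely required to miss $\wh f(\partial\wh{M_{3-k}})$, not to be unlinked from it.

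This is not a removable technicality, because the same quantity measures the dependence of the right-hand side on the choice of $\Delta_k$, i.e.\ whether the Claim is even consistent. Enlarging $\Delta_k$ by a disjoint proper $4$-chain $D'\subset D^6_+$ whose boundary $3$-cycle $S=\partial D'\subset\partial D^6_+$ links $\wh f(\partial\wh{M_{3-k}})$ nontrivially does not violate the stated hypotheses, yet it changes $[\wh f^{-1}\Delta_k]$ by a class whose intersection number with a relative $2$-cycle $(\Sigma,\partial\Sigma)$ in $(\wh{M_{3-k}},\partial\wh{M_{3-k}})$ equals the linking number of $S$ with $\wh f(\partial\Sigma)$ in $\partial D^6_+$; this can be made nonzero since $\ker\bigl(H_1(\partial\wh{M_{3-k}})\to H_1(\wh{M_{3-k}})\bigr)\neq 0$. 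A complete proof must therefore either isolate the implicit restriction on $\Delta_k$ under which the obstruction vanishes (in the paper's applications $\Delta_k$ arises as $D^6_+$ intersected with a Seifert membrane in $S^6$ for the closed-up embedding $\sigma(\wh f)|_{M_k}$, and the vanishing should be verified for those), or else keep the term $\Sigma_k\cap T$ in the bookkeeping and show its image in $H_1(\wh{M_{3-k}})$ cancels. The other issues you flag --- corners, the fact that $\Delta_k\times\{1\}$ meets $\wh f(\wh{M_{3-k}})\times\{1\}$ in the very $1$-manifold being computed (so the cap must first be bent into the interior before $Z_k$ is disjoint from $\partial\wh F(\wh{M_{3-k}}\times I)$ and the bounding $5$-chain can be chosen in the contractible complement of that boundary), and orientations --- are indeed routine.
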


For brevity, denote 
$$\wh{\WL}:=\wh{W}_1\times \wh{\K}_1\times \wh{W}_2\times \wh{\K}_2.$$

The map 
$$H_1(\wh{M_1})\times H_1(\wh{M_1})\times H_1(\wh{M_2})\times H_1(\wh{M_2})\rightarrow H_1({M_1})\times H_1({M_1})\times H_1({M_2})\times H_1({M_2})$$
in the diagram is induced by the inclusions $\wh{M_1}\subset M_1$ and $\wh{M_2}\subset M_2$.

Our choice of the ``base element'' $[\wh{f}^0]\in \wh{E}^6(\wh{M_1}\sqcup\wh{M_2})$ implies the following two claims.
\begin{clm}
\label{clm:Wrep}
For any $k\in\{1,2\}$, $[\wh{f}]\in\wh{E}^6(\wh{M_1}\sqcup\wh{M_2})$ and $[f]:=\sigma([\wh{f}])$ the homology classes $\wh{W}_k(\wh{f})
$ and $W_k(f)$ can be represented by the same $1$-submanifold in $\wh{M_k}$. Likewise, the homology classes $\wh{\K}_k(\wh{f})
$ and $\K_k(f)$ can be represented by the same $1$-submanifold in $\wh{M_k}$.
\end{clm}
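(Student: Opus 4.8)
The plan is to read off both invariants appearing in the statement from a single general position homotopy. Recall that $\wh{W}_k(\wh{f})$ and $\wh{\K}_k(\wh{f})$ are computed from a general position homotopy \emph{rel boundary} between $\wh{f}^0$ and $\wh{f}$, whereas $W_k(f)$ and $\K_k(f)$ are computed from a general position homotopy between $f^0$ and $f$. Since by construction $\sigma([\wh{f}^0])=[f^0]$, the restriction $\sigma(\wh{f}^0)|_{M_k}$ is isotopic to $f^0|_{M_k}=f^0_{M_k}$; because $W_k$ and $\K_k$ do not change if the base embedding is replaced by an isotopic one, I may compute $W_k(f)$ and $\K_k(f)$ using $\sigma(\wh{f}^0)$ in place of $f^0$.

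Next I would fix a general position homotopy rel boundary $\wh{F}:(\wh{M_1}\sqcup\wh{M_2})\times I\rightarrow D^6_+\times I$ between $\wh{f}^0$ and $\wh{f}$, and extend it to a homotopy $F:(M_1\sqcup M_2)\times I\rightarrow S^6\times I$ by putting $F:=(s_1\sqcup s_2)\times{\rm id}_I$ on $(T_{m_1}\sqcup T_{m_2})\times I$. The two prescriptions agree along the gluing locus $\partial T_{m_k}\times I$, since $\wh{f}$, $\wh{f}^0$, and hence $\wh{F}$, all restrict there to $s_k|_{\partial T_{m_k}}$; after the routine adjustment of $\wh{F}$ in a boundary collar, $F$ is a smooth general position homotopy between $\sigma(\wh{f}^0)$ and $\sigma(\wh{f})=f$.

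The crux is the observation that the solid-torus part of $F$ contributes no double points and no mutual $M_1$--$M_2$ intersections beyond those already carried by $\wh{F}$. Indeed $\wh{F}$ has image in $D^6_+\times I$ while $s_1\sqcup s_2$ has image in $D^6_-$, so the pieces can only meet over the equatorial $S^5\times I$ with $S^5=\partial D^6_+=\partial D^6_-$; there the $\wh{F}$-image of $\wh{M_k}$ and the $F$-image of $T_{m_k}$ have the same trace $s_k(\partial T_{m_k})\times I$, but this coincidence is precisely the gluing of $\wh{M_k}$ to $T_{m_k}$ inside $M_k$ and so is not a double point, while the cross terms $s_1(\partial T_{m_1})$ versus $s_2(\partial T_{m_2})$ and $s_1(T_{m_1})$ versus $s_2(T_{m_2})$ are disjoint because the balls $B_{1,i}$ and $B_{2,j}$ were chosen pairwise disjoint. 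Hence the double-point set of $F$ coincides with that of $\wh{F}$, and $F(M_1\times I)\cap F(M_2\times I)=\wh{F}(\wh{M_1}\times I)\cap\wh{F}(\wh{M_2}\times I)$; in particular both lie in ${\rm int}(\wh{M_k})\times I$. Projecting generically along $I$ then gives a single $1$-submanifold contained in ${\rm int}(\wh{M_k})\subset\wh{M_k}\subset M_k$ that represents $\wh{W}_k(\wh{f})\in H_1(\wh{M_k})$ and $W_k(f)\in H_1(M_k)$ simultaneously, and likewise one representing both $\wh{\K}_k(\wh{f})$ and $\K_k(f)$; this is the claim.

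The step I expect to cost the most care — though it remains mild — is precisely this last bookkeeping: one has to be sure that extending the homotopy trivially over the solid tori creates no spurious double points or $M_1$--$M_2$ intersections. This uses the two separations built into the construction of $\wh{E}^6(\wh{M_1}\sqcup\wh{M_2})$, namely $D^6_+$ versus $D^6_-$ and the pairwise disjointness of the balls $B_{k,i}$, together with the fact that $\wh{F}$ is in general position rel boundary to begin with. The smoothing of $F$ across the gluing locus and the genericity of the projection along $I$ are standard and do not affect the submanifolds above.
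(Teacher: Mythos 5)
Your argument is correct and is precisely the fleshing-out of what the paper leaves implicit: the paper offers no proof of this claim beyond the remark that it follows from choosing the base element $[\wh{f}^0]$ with $\sigma([\wh{f}^0])=[f^0]$, and the intended reasoning is exactly yours --- extend a rel-boundary general position homotopy by the constant homotopy $s_1\sqcup s_2$ on the solid tori and observe that all double points and all $M_1$--$M_2$ intersections are confined to the $D^6_+$ part. Your bookkeeping of the two separations ($D^6_+$ versus $D^6_-$, and the disjoint balls $B_{k,i}$) is the right justification, so no gap here.
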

\begin{clm}
\label{clm:commW}
The square in the diagram above commutes.
\end{clm}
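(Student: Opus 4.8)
The statement to prove is Claim~\ref{clm:commW}: the square in the diagram commutes, i.e., $\wh{\WL}$ composed with the map induced by inclusions $\wh{M_k}\subset M_k$ equals $\WL\circ\sigma$.

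The plan is to check commutativity invariant-by-invariant, using Claim~\ref{clm:Wrep} as the essential input. Fix $[\wh{f}]\in\wh{E}^6(\wh{M_1}\sqcup\wh{M_2})$ and set $[f]:=\sigma([\wh{f}])$. We must show that for each $k\in\{1,2\}$ the image of $\wh{W}_k(\wh{f})$ under the inclusion-induced map $H_1(\wh{M_k})\rightarrow H_1(M_k)$ equals $W_k(f)$, and similarly for $\wh{\K}_k$ versus $\K_k$. First I would invoke Claim~\ref{clm:Wrep}, which says precisely that $\wh{W}_k(\wh{f})$ and $W_k(f)$ can be represented by one and the same $1$-submanifold lying inside $\wh{M_k}$, and likewise $\wh{\K}_k(\wh{f})$ and $\K_k(f)$ by a common $1$-submanifold in $\wh{M_k}$. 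Since the inclusion-induced map on $H_1$ is exactly the map sending the class of a submanifold of $\wh{M_k}$ to the class of the same submanifold viewed in $M_k$, the four required equalities follow immediately once one unwinds the definitions: the relevant class in $H_1(M_k)$ is by definition represented by that submanifold, and the relevant class in $H_1(\wh{M_k})$ is represented by the very same submanifold. Taking the product over $k\in\{1,2\}$ and over the two flavours $W,\K$ of invariant yields $\WL(\sigma(\wh f)) = (\text{incl}_*)\,\wh{\WL}(\wh f)$, which is the commutativity of the square.

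The only point requiring a little care is making sure the base embeddings are compatible, since both $W_k$ (resp.\ $\K_k$) and $\wh W_k$ (resp.\ $\wh\K_k$) are defined relative to base embeddings $f^0$ and $\wh f^0$. But this is built into the setup: $\wh f^0$ was chosen precisely so that $\sigma([\wh f^0]) = [f^0]$, and Claim~\ref{clm:Wrep} (whose proof, stated earlier, rests on this choice) already packages the compatibility of the relative-to-$\wh f^0$ homotopy with the relative-to-$f^0$ homotopy — one extends a homotopy rel boundary on $\wh M_1\sqcup\wh M_2$ to a homotopy on $M_1\sqcup M_2$ by keeping it fixed (equal to $s_k$, resp.\ $s_{R,k}$) on the solid tori $T_{m_k}$, which contributes no new double-point set or intersection set. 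Thus the double-point $1$-manifolds and the $\wh f(\wh M_1)\cap\wh f(\wh M_2)$ intersection $1$-manifolds computed downstairs agree with those computed upstairs.

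I do not expect a genuine obstacle here; the claim is stated as an easy consequence of the choice of $[\wh f^0]$, and the real content has already been absorbed into Claim~\ref{clm:Wrep}. The ``hard part'', such as it is, is purely bookkeeping: being explicit that the inclusion-induced homomorphism $H_1(\wh M_k)\to H_1(M_k)$ is realised on the nose by ``same submanifold, different ambient manifold'', and that the general-position homotopies realising the four invariants can be taken to live entirely in $\wh M_k$ (on the source side) so that pushing forward along $\wh M_k\subset M_k$ does nothing but relabel. Once that is said, the four equalities $\mathrm{incl}_*\wh W_k(\wh f)=W_k(f)$ and $\mathrm{incl}_*\wh\K_k(\wh f)=\K_k(f)$ are immediate, and assembling them gives the commutative square.
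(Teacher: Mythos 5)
Your proposal is correct and matches the paper's (implicit) argument: the paper offers no separate proof of Claim~\ref{clm:commW}, simply asserting that it follows from the choice of $[\wh f^0]$, and the intended derivation is exactly yours — by Claim~\ref{clm:Wrep} each of $\wh W_k(\wh f)$, $W_k(f)$ (resp.\ $\wh\K_k(\wh f)$, $\K_k(f)$) is represented by one and the same $1$-submanifold of $\wh M_k$, and the inclusion-induced map on $H_1$ sends the class of that submanifold in $\wh M_k$ to its class in $M_k$. Nothing is missing.
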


\smallskip
\subsection{Proof of part (II) of Theorem \ref{thm:main}.}
\begin{lem}[Bijectivity]
\label{lem:bij}
For any $x\in H_1(\wh{M_1})\times H_1(\wh{M_1})\times H_1(\wh{M_2})\times H_1(\wh{M_2})$ the restriction $\sigma_R|_{\wh{\WL}^{-1}(x)}$ is a bijection.
\end{lem}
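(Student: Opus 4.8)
The map $\sigma_R$ takes a proper embedding $\wh f:\wh M_1\sqcup\wh M_2\to D^6_+$ and glues in the \emph{fixed} standard pieces $s_{R,k}$ over the solid tori to produce an embedding $S^3\sqcup S^3\to S^6$ (recall that $\wh M_k\cup_{R} T_{m_k}$ is $S^3$ precisely because gluing by $R$ undoes the surgery). So $\sigma_R$ is ``extension by a fixed standard closed-up piece'', and the statement is that this extension, restricted to a fixed fibre $\wh\WL^{-1}(x)$, is a bijection onto $E^6(S^3\sqcup S^3)$. I would prove surjectivity and injectivity of $\sigma_R|_{\wh\WL^{-1}(x)}$ separately, after first reducing to the case of a single value of $x$.

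\textbf{Reduction and surjectivity.} First I would observe that by the $\#$-commutativity Claim \ref{clm:comm} and by Theorem \ref{thm:H}, the group $E^6(S^3\sqcup S^3)$ acts on $\wh E^6(\wh M_1\sqcup\wh M_2)$ in a way compatible with $\sigma_R$; moreover the linked connected sums $\#_k$ with a disk change the Whitney invariants $\wh\WL$ exactly as in the proof of part (I) of Theorem \ref{thm:main} (via Claim \ref{clm:wsurj} applied inside $D^6_+$ rel boundary). Hence all fibres $\wh\WL^{-1}(x)$ are carried onto each other by these operations, and $\sigma_R$ intertwines them with the corresponding modifications on $E^6(S^3\sqcup S^3)$ — but since $\sigma_R$ lands in $S^3\sqcup S^3$ and the boundary tori are glued by $R$, these disk-modifications become \emph{trivial} on the target (a meridian disk of a glued-in solid torus bounds in $S^3$). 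This simultaneously proves: (a) it suffices to prove the claim for one $x$, say $x=\wh\WL(\wh f^0)$; and (b) $\sigma_R|_{\wh\WL^{-1}(x)}$ is surjective, since $\sigma_R$ is surjective by Surjectivity Lemma \ref{lem:surj}, and any preimage can be moved into the prescribed fibre by a sequence of rel-boundary linked disk-sums without changing its $\sigma_R$-image.

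\textbf{Injectivity.} This is the main obstacle. Suppose $[\wh f_0],[\wh f_1]\in\wh\WL^{-1}(x)$ with $\sigma_R([\wh f_0])=\sigma_R([\wh f_1])$. Then there is an ambient isotopy of $S^3\sqcup S^3$ in $S^6$ carrying one closed-up embedding to the other; I want to push this back to a rel-boundary isotopy of the $\wh M_k$ pieces in $D^6_+$. The closed-up embeddings agree with the \emph{fixed} standard pieces $s_{R,k}$ on the solid tori $T_{m_k}$, and these pieces are unknotted standard disks; a general-position/engulfing argument (in the spirit of the Standardization Lemma of \cite{Sk15}, case $(p,q,m)=(0,3,6)$, which underlies Lemma \ref{lem:surj}) lets me assume the isotopy between the closed-up embeddings is standard on a neighbourhood of the glued solid tori, hence restricts to a rel-boundary isotopy of $\wh M_1\sqcup\wh M_2$ inside $D^6_+$ — \emph{except} that such an isotopy need not preserve the condition $[\,\cdot\,]\in\wh E^6$, i.e. the values of $\wh\WL$ may jump. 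The obstruction to correcting this is precisely the difference of Whitney invariants, which vanishes by hypothesis ($\wh\WL(\wh f_0)=\wh\WL(\wh f_1)=x$); here I would invoke the transitivity/freeness package — part (II) of Theorem \ref{thm:Sk} and Theorem \ref{thm:H} together with Claim \ref{clm:Wrep} — to conclude that once the Whitney data agree, the only remaining ambiguity is killed by an unlinked spherical knotting supported in a ball disjoint from everything, and such a knotting, being already trivial on the closed-up side by the agreement of $\sigma_R$-images and Theorem \ref{thm:H} (the $r_k$ are faithful together with $\li,\lii$), must itself be trivial. Thus $[\wh f_0]=[\wh f_1]$.

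\textbf{Where the difficulty lies.} The genuinely delicate point is the general-position argument showing that an isotopy of the \emph{closed} link can be taken standard near the glued solid tori so that it restricts to a rel-boundary isotopy of the cut-open pieces; this is exactly the ``second half of the Standardization Lemma'' which, as the authors note, \emph{fails} in a naive form in these dimensions, and the correct statement is that it holds \emph{up to} the Whitney obstruction — so the bookkeeping of which homology classes the correcting disks represent, and matching them with $\cap$-products via Claim \ref{clm:WZeifert}, is where all the real work sits. Everything else (surjectivity, the reduction to a single fibre, the final faithfulness step via Theorem \ref{thm:H}) is formal once that geometric input is in place.
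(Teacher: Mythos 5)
Your surjectivity argument is workable, though more roundabout than the paper's (which simply picks some $[\wh{f}]$ in the fibre using surjectivity of $\wh{\WL}$ and then, for any prescribed target $[g]$, uses the group structure of $E^6(S^3\sqcup S^3)$ to find $[g']$ with $\sigma_R([\wh{f}])\#[g']=[g]$; the sum $\#[g']$ does not move the $\wh{\WL}$-fibre). The injectivity half, however, has a genuine gap. You assert that the obstruction to standardizing the isotopy of the closed-up links ``is precisely the difference of Whitney invariants''; that is exactly what needs proof. A priori the obstruction is a tuple of \emph{integers} --- the algebraic intersection numbers of the concordance with the disks $s_{R,k,i}(D^2\times 0)\times I$, which (after the Whitney trick) yield $[\wh{f}_1]=[\wh{f}_0\#_1\Sigma a_i\omega_{R,1,i}\#_2\cdots]$ for some integers $a_i,b_i,c_j,d_j$; this is the Preimage$'$ Lemma \ref{lem:pre'}, the $\sigma_R$-analogue of Lemma \ref{lem:pre}. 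The Whitney difference records only the images $\Sigma_i a_i[\m_{1,i}]$, etc., where $\m_{k,i}=P_{k,i}R\p$ are the meridians of the surgery tori. Hence vanishing of the Whitney difference kills the integer coefficients \emph{only if} the classes $[\m_{k,i}]$ are linearly independent in $H_1(\wh{M_k})$, and establishing this is the real content of the paper's injectivity proof: one computes $\lk(\Sigma_i a_i\m_{1,i},\,P_{1,1}q_{1,1})$ in two ways, getting $a_1$ because the $\m_{1,i}$ are meridians of disjoint solid tori and $q_{1,1}$ is the parallel of the first one, and getting $0$ because $P_{1,1}q_{1,1}$ is disjoint from $\wh{M_1}$, so the linking number depends only on the class $\Sigma_i a_i[\m_{1,i}]=0$. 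You never address this independence, and without it equality of Whitney data does not force the multiplicities to vanish.

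Your fallback --- invoking a ``transitivity/freeness package'' so that the residual ambiguity is an unlinked knotting in a ball killed by Theorem \ref{thm:H} --- is also circular in this architecture: transitivity of the $E^6(S^3\sqcup S^3)$-action on the fibres of $\wh{\WL}$ in $\wh{E}^6(\wh{M_1}\sqcup\wh{M_2})$ (the special case of Claim \ref{clm:xy}, and part (II) of Theorem \ref{thm:main}) is \emph{deduced from} the Bijectivity Lemma, not available before it; and part (II) of Theorem \ref{thm:Sk} concerns closed connected manifolds, not rel-boundary classes in $D^6_+$. The residual ambiguity after matching $\sigma_R$-images is not an unlinked sum supported in a ball but precisely the linked sums with the spheres $\omega_{R,k,i}$, and only the meridian-independence computation shows these must occur with zero multiplicity.
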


\begin{clm}
\label{clm:Wpre}
Let $[f],[f']\in E^6(M_1\sqcup M_2)$ be isotopy classes such that $\WL(f)=\WL(f')$. Then there are isotopy classes $[\wh{f}],[\wh{f'}]\in\wh{E}^6(\wh{M_1}\sqcup \wh{M_2})$ such that 
$\sigma([\wh{f}])=[f]$, $\sigma([\wh{f'}])=[f']$, and $\wh{\WL}(\wh{f})=\wh{\WL}(\wh{f'})$.
\end{clm}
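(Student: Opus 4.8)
The plan is to realize the passage from $[f]$ to $[\wh f]$ as an instance of Surjectivity Lemma \ref{lem:surj} together with a book-keeping of how the four Whitney invariants behave under $\sigma$. By Surjectivity Lemma \ref{lem:surj} every isotopy class in $E^6(M_1\sqcup M_2)$ is $\sigma([\wh g])$ for some $[\wh g]\in\wh E^6(\wh M_1\sqcup\wh M_2)$; so pick any preimages $[\wh g]$ of $[f]$ and $[\wh g']$ of $[f']$. By Claim \ref{clm:Wrep} the invariants $\wh\WL(\wh g)$ and $\WL(f)$ agree after the map $H_1(\wh M_k)\to H_1(M_k)$ induced by inclusion (this is Claim \ref{clm:commW}, the commuting square). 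Since $\WL(f)=\WL(f')$, the two vectors $\wh\WL(\wh g)$ and $\wh\WL(\wh g')$ have the same image in $H_1(M_1)^2\times H_1(M_2)^2$; I must correct $\wh g'$ (or $\wh g$) within its $\sigma$-fiber so that the two $\wh\WL$-values become literally equal, not merely equal after inclusion.

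First I would analyze the kernel of $H_1(\wh M_k)\to H_1(M_k)$. Since $M_k$ is obtained from $\wh M_k$ by gluing the solid tori $T_{m_k}$ back with the parallel/meridian swap $R$, a Mayer–Vietoris computation shows this kernel is generated by the classes of the framed meridians $\p_{k,i}=P_{k,i}R\m$ (i.e.\ the cores of the attached solid tori, carried into $\wh M_k$ along the gluing). So $\wh\WL(\wh g)-\wh\WL(\wh g')$ lies in the span of these meridional classes in each of the four slots. The task is then reduced to: given a target correction that is a combination of the $\p_{k,i}$'s in the $W_k$- and $\K_k$-slots, produce another preimage $[\wh f']$ of $[f']$ whose $\wh\WL$ differs from $\wh\WL(\wh g')$ by exactly that correction.

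The mechanism for changing $\wh W_k$ and $\wh\K_k$ by a meridional class without changing the other invariants and without changing $\sigma$ should mirror the proof of part (I) of Theorem \ref{thm:main}: there one used Claim \ref{clm:wsurj} to build an embedded $g:D^4\to S^6$ spanning a prescribed circle and disjoint from the rest, and then took $f\#_1(g|_{S^3})$. Here I would instead do the linked connected sum \emph{inside $D^6_+$ relative to the boundary}, dragging the spanning disk along a meridian $\p_{k,i}$ (which, crucially, lies in $\wh M_k$ after the gluing and has trivial Hopf invariant by Claim \ref{clm:untwist}, so the framing obstructions vanish exactly as in Claim \ref{clm:wsurj}). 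Performing such a move changes $\wh W_k$ (or $\wh\K_k$) by the meridional class $\p_{k,i}$ while leaving $\sigma$ of the result unchanged — because the disk can be pushed into the attached solid torus region, where after applying $\sigma$ the contraction homotopy is absorbed — and leaving the other three invariants fixed by the same disjointness bookkeeping as in the proof of part (I). Iterating over the finitely many $\p_{k,i}$ with the correct integer multiplicities (positive or negative, choosing orientations of the spanning disks accordingly) turns $[\wh g']$ into the desired $[\wh f']$ with $\sigma([\wh f'])=[f']$ and $\wh\WL(\wh f')=\wh\WL(\wh g)$; set $[\wh f]:=[\wh g]$.

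The main obstacle is the claim that this disk-dragging move does not alter $\sigma$ of the embedding: one must check carefully that the spanning disk $g(D^4)$ and its tube can be chosen inside $D^6_+$ meeting $\wh f(\partial\wh M_k)$ and the boundary tori only in the controlled standard way, so that after re-gluing via $s_k$ the resulting embedding is still of the form $\sigma(\wh\cdot)$ and is isotopic to $[f']$ rel nothing — i.e.\ that the modification is supported near the gluing locus and is absorbed by an isotopy of $\sigma(\wh g')$ back to itself. Verifying this is essentially a relative version of the independence-of-tube argument (cf.\ \cite[Standardization Lemma]{Sk15}) combined with the fact that a 4-disk attached along a meridian of a solid torus, pushed into that solid torus, becomes inessential; I would isolate it as the technical heart of the claim, while the homological book-keeping (Mayer–Vietoris for the kernel, additivity of $\wh W_k,\wh\K_k$ under the move) is routine.
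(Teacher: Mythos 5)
Your proposal is correct and follows essentially the same route as the paper: take arbitrary $\sigma$-preimages via Surjectivity Lemma \ref{lem:surj}, observe that the kernel of $H_1(\wh{M_k})\to H_1(M_k)$ is generated by the classes $[\p_{k,i}]$, and then correct one preimage by linked connected sums ($\#_1$ or $\#_2$, as appropriate) with spheres bounding disks meeting $\wh{f}(\wh{M_k})$ along $\p_{k,i}$. The ``technical heart'' you isolate --- that the move preserves $\sigma$ and shifts exactly one of the four invariants by $[\p_{k,i}]$ --- is not an open gap in the paper's framework: it is precisely the already-proved ``if'' part of Preimage Lemma \ref{lem:pre} (using the standard spheres $\omega_{k,i}$, which bound the disks $s_{k,i}(0\times D^4)$ disjoint from the image) together with the last four columns of the Calculation Lemma \ref{lem:calc} table, so the paper's proof reduces to citing those two statements.
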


\begin{proof}[Proof of part (II) of Theorem \ref{thm:main}]
Let $[f],[f']\in E^6(M_1\sqcup M_2)$ be isotopy classes such that $\WL(f)=\WL(f')$. To complete the proof we need to find an embedding $g:S^3\sqcup S^3\rightarrow S^6$ such that $[f']\#[g]=[f]$.

Let $[\wh{f}],[\wh{f'}]\in\wh{E}^6(\wh{M_1}\sqcup \wh{M_2})$ be the isotopy classes whose existence is guaranteed by Claim \ref{clm:Wpre}. By Haefliger Theorem \ref{thm:H} there is an embedding $g:S^3\sqcup S^3\rightarrow D^6_+$ such that
$$\sigma_R([\wh{f'}])\#[g]=\sigma_R([\wh{f}]).$$
By $\#$-commutativity Claim \ref{clm:comm} we get
$$\sigma_R([\wh{f'}]\#[g])=\sigma_R([\wh{f}]).$$
Clearly, $\wh{\WL}([\wh{f'}]\#[g])=\wh{\WL}([\wh{f'}])$, so by Bijectivity Lemma \ref{lem:bij} we have
$$[\wh{f'}]\#[g]=[\wh{f}].$$
So,
\begin{multline*}
[\wh{f'}]\#[g]=[\wh{f}]\text{\quad}\Rightarrow\text{\quad} \sigma([\wh{f'}]\#[g])=\sigma([\wh{f}])\overset{(1)}{\text{\quad}\Rightarrow\text{\quad}}\\
\overset{(1)}{\text{\quad}\Rightarrow\text{\quad}} \sigma([\wh{f'}])\#[g] = \sigma([\wh{f}]) \overset{(2)}{\text{\quad}\Rightarrow\text{\quad}} [f']\#[g]=[f],
\end{multline*}
where (1) follows by the $\#$-commutativity Claim \ref{clm:comm} and (2) follows from the choice of $[\wh{f}]$ and $[\wh{f'}]$. We get that $g$ is as required.
\end{proof}

\smallskip
\subsection{Definition of $\omega$.}
Define
$$\omega_{k,i}:S^3\rightarrow S^6 \text{\quad by the formula \quad} \omega_{k,i}:=s_{k,i}|_{0\times S^3},$$
see Fig.\ref{f:omega}.

	\begin{figure}[H]
	\begin{center} 
	\includegraphics{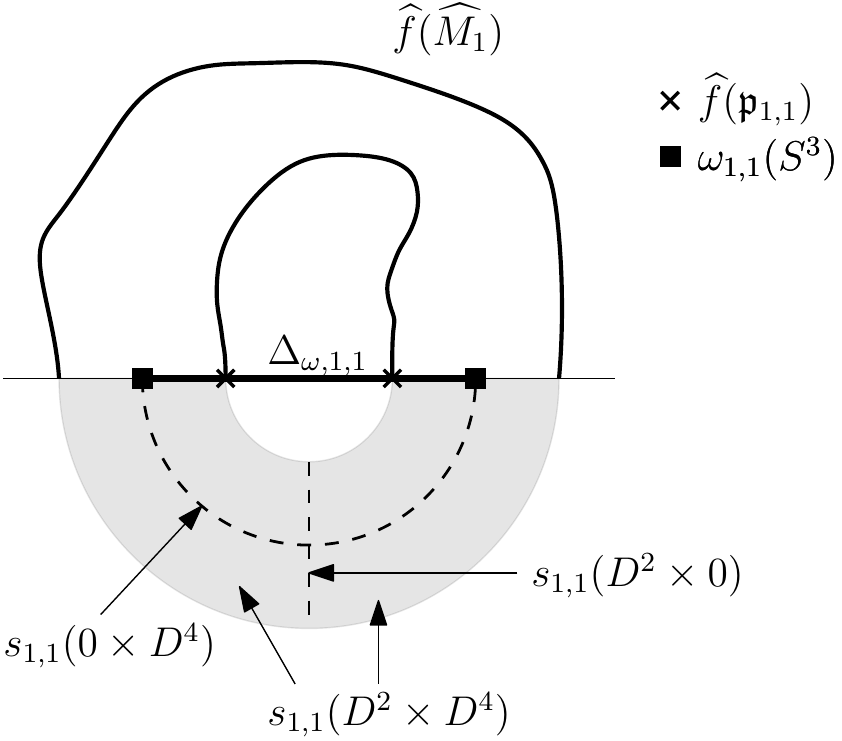} 
	\end{center} 
	\caption{The circle $f(\p_{1,1})$, the sphere $\omega_{1,1}(S^3)$, and the disk $\Delta_{\omega,1,1}$.}
	\label{f:omega} 
	\end{figure}

\smallskip
\subsection{Multiple linked embedded connected sum.}
Take any $[\wh{f}]\in \wh{E}^6(\wh{M_1}\sqcup\wh{M_2})$ and $g:S^3\rightarrow \partial D^6_+$ such that the images of $\wh{f}$ and $g$ are disjoint. For $k\in\{1,2\}$ we shall write $$\wh{f}\#_kg$$ meaning $\wh{f}\#_kg'$ -- the linked embedded connected sum of $f$ with some embedding $g':S^3\rightarrow {\rm Int}D^6_+$ obtained from $g$ by a slight shift into the interior of $D^6_+$. This agreement guarantees that $[\wh{f}\#_kg]\in \wh{E}^6(\wh{M_1}\sqcup\wh{M_2})$.

For any integer $a$ we denote
\begin{itemize}
\item $\wh{f}\#_kag:=\wh{f}\underbrace{\#_kg\#_kg\ldots\#_kg}_{a}, \text{\quad if \quad} a>0,$
\item $\wh{f}\#_kag:=\wh{f}\#_k(-a)(-g), \text{\quad if \quad} a<0,$
\item $\wh{f}\#_kag:=\wh{f}\text{\quad if \quad} a=0.$
\end{itemize}
Here $-g:S^3\rightarrow \partial D^6_+$ is an embedding such that ${\rm Im}(-g)={\rm Im}(g)$ and $[g]\#[-g]$ is trivial considered as an isotopy class of an embedding $S^3\rightarrow S^6$.

\smallskip
\subsection{Proof of part (III) of Theorem \ref{thm:main}.}
The following lemma allows us to describe the preimage of $\sigma$.

\begin{lem}[Preimage]
\label{lem:pre}
For any $[\wh{f}],[\wh{f}']\in \wh{E}^6(\wh{M_1}\sqcup\wh{M_2})$ we have that $\sigma([\wh{f}])=\sigma([\wh{f}'])$ if and only if
$$[\wh{f}']=[\wh{f}\underset{i=1}{\overset{m_1}{\#_1}}a_i\omega_{1,i} \underset{i=1}{\overset{m_1}{\#_2}}b_i\omega_{i,1} \underset{j=1}{\overset{m_2}{\#_2}}c_j\omega_{2,j} \underset{j=1}{\overset{m_2}{\#_1}}d_j\omega_{2,j}]$$ 
for some integers $a_i$, $b_i$, $c_j$, and $d_j$.
\end{lem}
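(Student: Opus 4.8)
The plan is to analyze the geometric obstruction to standardizing an isotopy between two standardized embeddings, just as the Surjectivity Lemma analyzed the obstruction to standardizing a single embedding. Let $\wh f, \wh f'$ be representatives with $\sigma(\wh f)=\sigma(\wh f')$ up to isotopy. For the ``if'' direction, one first checks that each $\wh f\#_k a\,\omega_{k,i}$ has the same image under $\sigma$ as $\wh f$: the sphere $\omega_{k,i}(S^3)=s_{k,i}(0\times S^3)$ bounds the disk $s_{k,i}(0\times D^4)$, which under $\sigma$ lies inside the glued solid torus region $s_k(T_{m_k})$, so the linked connected sum along $\omega_{k,i}$ can be undone by an ambient isotopy supported near that solid torus after $\sigma$ is applied. (The case $k=1$, $\#_2\,\omega_{i,1}$ is the symmetric statement with the roles of the components swapped, and the $\#_1\,\omega_{2,j}$, $\#_2\,\omega_{2,j}$ cases are the ones coming from the second manifold.) Summing over $i,j$ and all four families gives the ``if'' direction.

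For the ``only if'' direction I would start from an isotopy $F:(M_1\sqcup M_2)\times I\to S^6\times I$ between $\sigma(\wh f)$ and $\sigma(\wh f')$ and put it in general position rel the fixed behaviour on the solid tori $s_k(T_{m_k})$, i.e.\ arrange $F$ to be the ``product'' standard isotopy on a neighbourhood of $T_{m_k}\times I$ and to keep $\wh M_1\sqcup\wh M_2$ inside $D^6_+\times I$ away from a collar of $\partial D^6_+\times I$, except along finitely many times where a piece of the manifold is pushed across the equator $S^5\times I$ into $D^6_-$ and back. Each such ``excursion'' of a component $M_k$ across the equator, when pushed back, leaves behind a connected summand which is (the image under $\sigma$ of) a linked connected sum of $\wh f$ with a copy of one of the $\omega$-spheres — the sphere along which $M_k$ was dragged, which is isotopic in $D^6_-$ to some $\omega_{k',i}$ because the balls $B_{k',i}$ are disjoint and each $s_{k',i}$ is standard. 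Keeping track of orientations and multiplicities gives the integers $a_i,b_i,c_j,d_j$, and after subtracting off all these summands the remaining isotopy can be pushed entirely into $D^6_+\times I$ rel boundary, i.e.\ it descends to an isotopy in $\wh E^6(\wh M_1\sqcup\wh M_2)$ between $\wh f\#_1\cdots\#_2\cdots$ and $\wh f'$.

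The main obstacle is making the ``excursion'' analysis rigorous: showing that the interaction of $M_k$ with $D^6_-$ during the isotopy can be reduced, by an isotopy of $F$, to a disjoint union of elementary excursions each of which is exactly a linked connected sum with one of the standard spheres $\omega_{k',i}$, with no residual contribution. This is where one needs codimension $\geq 3$ (so that arcs used to drag the manifold can be straightened and are essentially unique), the fact that $s_{k,i}$ restricted to $0\times D^4$ is a standard unknotted disk in its ball $B_{k,i}$, and that the $B_{k,i}$ are pairwise disjoint and disjoint across $k$; it is essentially a parametrized/relative version of the Standardization Lemma argument of \cite{Sk15}, now applied to the isotopy $F$ rather than to a single embedding. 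I would also need Claim \ref{clm:untwist} ($h(\p_{k,i})=0$) to ensure the framings match up so that the dragged sphere is genuinely $\omega_{k',i}$ and not a twisted version of it. A secondary, more bookkeeping-type point is to verify that the four families of summands in the statement exhaust all possibilities: excursions of $\wh M_1$ past $\partial D^6_+$ that stay near $B_{1,i}$ give the $a_i\omega_{1,i}$ terms, those that wander near $B_{2,j}$ give the $d_j\omega_{2,j}$ terms, and symmetrically for $\wh M_2$, giving the $c_j$ and $b_i$ terms.
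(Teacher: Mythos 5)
Your ``if'' direction is essentially the paper's: the sphere $\omega_{k,i}(S^3)=s_{k,i}(0\times S^3)$ bounds the disk $s_{k,i}(0\times D^4)$, which is disjoint from ${\rm Im}(\sigma(\wh{f}))$ (note: the disk is \emph{disjoint from} $s_k(T_{m_k})$, not ``inside the glued solid torus region'' as you wrote, since $0\times D^4$ misses $\partial D^2\times D^2\subset D^2\times D^4$), so dragging the sphere along that disk is an isotopy. That part is fine.

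The ``only if'' direction, however, has a genuine gap exactly where you flag ``the main obstacle'': the excursion analysis is not just hard to make rigorous, it is the wrong decomposition. A general-position isotopy does not break up into finitely many elementary crossings of the equator $S^5\times I$ --- the preimage of $D^6_-\times I$ under $F$ is a $4$-manifold with boundary with no canonical handle structure, and ``excursions'' are not well-defined or countable. What the paper does instead is to take the $2$-disks $\Delta_{k,i}:=s_{k,i}(D^2\times 0)$ and define the integers $a_i,b_i,c_j,d_j$ as the \emph{algebraic intersection numbers} of $F(M_k\times I)$ with the interiors of $\Delta_{k,i}\times I$; these intersections are isolated points by general position ($4+3=7$), so the count is finite and well-defined, and it is the correct measure of how many copies of $\omega_{k,i}$ must be split off. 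You never identify this invariant. Moreover, even granting the count, your claim that ``after subtracting off all these summands the remaining isotopy can be pushed entirely into $D^6_+\times I$'' is precisely the step that needs the Whitney trick: after gluing $-F$ to the standard isotopy realizing $\wh{f}\#_1 a_i\omega_{1,i}\cdots$, the algebraic intersection numbers with $\Delta_{k,i}\times I$ vanish, but the geometric intersections need not, and one must cancel them in pairs (possible here since $6+1\geq 5$ and the relevant pieces are simply connected) before pushing off along the normal framing of $\Delta_{k,i}\times I$ supplied by $s_{k,i}(D^2\times D^4)$ and invoking concordance-implies-isotopy. Two further points: the preliminary step of making the isotopy fixed on $T_{m_1}\sqcup T_{m_2}$ is not ``general position'' --- it requires killing a framing obstruction in $\pi_1(V_{5,2})=\pi_2(V_{5,2})=0$ over the annuli $q_{k,i}\times I$ (the paper's Claim \ref{clm:istd}); and Claim \ref{clm:untwist} ($h(\p_{k,i})=0$) plays no role in this lemma --- it is needed in the Calculation Lemma, not here.
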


\begin{rem}
In other words, the lemma states that $\sigma([\wh{f}])=\sigma([\wh{f}'])$ if and only if $[\wh{f}']$ can be obtained from $[\wh{f}]$ by several operations of the form
\begin{itemize}
\item $[\wh{f}]\to [\wh{f}\#_1\pm\omega_{1,i}],$
\item $[\wh{f}]\to [\wh{f}\#_2\pm\omega_{1,i}],$
\item $[\wh{f}]\to [\wh{f}\#_1\pm\omega_{2,j}],$
\item $[\wh{f}]\to [\wh{f}\#_2\pm\omega_{2,j}].$
\end{itemize}
where $1\leq i\leq m_1$ and $1\leq j\leq m_2$.
\end{rem}

\begin{proof}[Proof of the ``if part'' of Preimage Lemma \ref{lem:pre}]
The remark above makes the ``if'' part obvious. For instance, there is an isotopy between $\sigma(\wh{f}\#_1\pm\omega_{1,i})$ and $\sigma(\wh{f})$ which ``drags'' the sphere $\omega_{1,i}(S^3)$ along the disk $s_{1,i}(0\times D^4)$. This is indeed an isotopy because the disk $s_{1,i}(0\times D^4)$ is disjoint with ${\rm Im}(\sigma(\wh{f}))$, see Fig.\ref{f:omega} for the case $i=1$.
\end{proof}

For a homology class $a\in H_1(\wh{M_k})$ we denote by $\lk(\p_{k,i},a)$ the linking number of $\p_{k,i}\subset \partial\wh{M_k}$ and any oriented $1$-submanifold of ${\rm Int}\wh{M_k}\subset S^3$ representing $a$. Clearly, this linking number is well defined, i.e., do not depend on the choice of the representative.

Denote by $[\p_{k,i}]$ the respective homology class in $H_1(\wh{M_k})$.

Let $\wh{f}$ be a proper embedding such that $[\wh{f}]\in \wh{E}^6(\wh{M_1}\sqcup\wh{M_2})$. Denote 
$$\x_{k,i}(\wh{f}):=\lx(\omega_{k,i},(\sigma_R\wh{f})_k),$$
where $(\sigma_R\wh{f})_k$ is the restriction of $\sigma_R\wh{f}:S^3\sqcup S^3\rightarrow S^6$ to the $k$-th connected component of its domain.

\begin{lem}[Calculation]
\label{lem:calc}
Suppose that $[\wh{f}]\in \wh{E}^6(\wh{M_1}\sqcup\wh{M_2})$, $1\leq i \leq m_1$, and $1\leq j \leq m_2$.

In the first column of the table is an embedding $\wh{f}'$. In the first row are symbols denoting different isotopy invariants.

In each cell of the columns ``$\li$'' to ``$r_2$'' is the difference of the corresponding invariant of $\sigma_R(\wh{f}')$ and $\sigma_R(\wh{f})$.

In each cell of the columns ``$\wh{W}_1$'' to ``$\wh{\K}_2$'' is the difference of the corresponding invariant of $\wh{f}'$ and $\wh{f}$.

\begin{center}
{
\noindent\makebox[\linewidth]
{
\begin{tabular}{| l || l | l | l | l | l | l | l | l |}
\hline
$\wh{f}'$ & $\li$ & $\lii$ & $r_1$ & $r_2$ & $\wh{W}_1$ & $\wh{W}_2$ & $\wh{\K}_1$ & $\wh{\K}_2$ \\ \hline \hline
$\wh{f}\#_1\omega_{1,i}$ & $0$ & $2\lk(\wh{\K}_1\wh{f},\p_{1,i})$ & $\frac{\x_{1,i}(\wh{f})}{2}$ & $0$ & $[\p_{1,i}]$ & $0$ & $0$ & $0$ \\ \hline
$\wh{f}\#_2\omega_{1,i}$ & $2\lk(\wh{\K}_1\wh{f},\p_{1,i})$ & $\x_{1,i}(\wh{f})$ & $0$ & $0$ & $0$ & $0$ & $[\p_{1,i}]$ & $0$ \\ \hline
$\wh{f}\#_2\omega_{2,j}$ & $2\lk(\wh{\K}_2\wh{f},\p_{2,j})$ & $0$ & $0$ & $\frac{\x_{2,j}(\wh{f})}{2}$ & $0$ & $[\p_{2,j}]$ & $0$ & $0$ \\ \hline
$\wh{f}\#_1\omega_{2,j}$ & $\x_{2,j}(\wh{f})$ & $2\lk(\wh{\K}_2\wh{f},\p_{2,j})$ & $0$ & $0$ & $0$ & $0$ & $0$ & $[\p_{2,j}]$ \\ \hline
\end{tabular}
}
}
\end{center}
\end{lem}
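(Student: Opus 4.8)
The plan is to verify all thirty-two entries of the table by computing each invariant directly from its definition applied to the embeddings $\wh f\#_1\omega_{1,i}$, $\wh f\#_2\omega_{1,i}$, $\wh f\#_2\omega_{2,j}$, and $\wh f\#_1\omega_{2,j}$. The four rows are handled in essentially the same way, by exchanging the roles of the two components and of the two "sides" of a linked connected sum, so I would treat the first row, $\wh f'=\wh f\#_1\omega_{1,i}$, in full and indicate the symmetric modifications for the others. Throughout I will use Claim \ref{clm:untwist} ($h(\p_{k,i})=0$), Claim \ref{clm:WZeifert} (computing the $\wh\K$ invariants from a Seifert-type submanifold), and the additivity Lemmas \ref{lem:Lass} and \ref{lem:conn}.

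First I would compute the four $\wh\WL$-columns. By the "if" part of the Preimage Lemma, $\sigma(\wh f\#_1\omega_{1,i})=\sigma(\wh f)$: dragging $\omega_{1,i}(S^3)$ along the disk $s_{1,i}(0\times D^4)$ (which is disjoint from $\mathrm{Im}\,\sigma\wh f$) shows this. But the tube used to form $\#_1$ changes $\wh f|_{\wh M_1}$ by attaching a handle, and the resulting change in the Whitney invariant $\wh W_1$ is exactly the class $[\p_{1,i}]$, because $\p_{1,i}=P_{1,i}R\m$ is precisely the boundary circle along which the $i$-th solid torus is glued and along which the new $1$-dimensional double-point set runs; I would make this precise by taking the "base embedding" $\wh f^0$ and a general-position homotopy rel boundary realizing the connected sum with $\omega_{1,i}$, and reading off the self-intersection $1$-manifold. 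The other three $\wh\WL$ entries in that row ($\wh W_2$, $\wh\K_1$, $\wh\K_2$) vanish because $\omega_{1,i}(S^3)$ and its spanning disk can be kept disjoint from $f(M_2)$ and from $f(\wh M_2)$, and because $\#_1$ affects only the first component; Claim \ref{clm:WZeifert} gives $\wh\K_2=0$ by exhibiting a $\Delta_1$ disjoint from $\wh f(\partial\wh M_2)$, and $\wh\K_1=0$ similarly.

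Next I would compute the four "link-type" columns $\li,\lii,r_1,r_2$, where the entries record the difference of the invariants of $\sigma_R\wh f'$ and $\sigma_R\wh f$. Here $\sigma_R$ glues on the twisted tori, and $\sigma_R(\wh f\#_1\omega_{1,i})$ differs from $\sigma_R(\wh f)$ by a connected sum of the first component with a copy of $\omega_{1,i}$ pushed into $D^6_+$. Then $\lii$ changes by $\lx$ of the second component against this new $\omega_{1,i}$-summand; by an elementary linking-number computation this equals $2\,\lk(\wh\K_1\wh f,\p_{1,i})$, the factor $2$ coming from the twisting diffeomorphism $R$ (meridian $\leftrightarrow$ parallel) in $\sigma_R$ together with the self-linking framing, and $\lk$ being evaluated against the $\wh\K_1$-class because that class is, by definition, the locus where $F(M_1\times I)$ meets $F(M_2\times I)$. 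The invariant $\li$ is unchanged ($\omega_{1,i}(S^3)$ bounds a disk in the complement of the second component, so its linking with the second component is zero). For $r_1$ I would apply Lemma \ref{lem:conn}: $r_1$ changes by $r(\omega_{1,i})+\tfrac12(\lx(\text{first comp},\omega_{1,i})+\lx(\omega_{1,i},\text{first comp}))$; the knot type of $\omega_{1,i}$ is standard so $r(\omega_{1,i})=0$, one of the two mixed linking terms vanishes for framing reasons (this is where $h(\p_{1,i})=0$ enters), and the other is $\x_{1,i}(\wh f)=\lx(\omega_{1,i},(\sigma_R\wh f)_1)$, giving $\tfrac{\x_{1,i}(\wh f)}2$; and $r_2$ is unchanged since $\#_1$ does not touch the second component. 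The rows for $\#_2\omega_{1,i}$, $\#_2\omega_{2,j}$, $\#_1\omega_{2,j}$ follow by the symmetry swapping $\#_1\leftrightarrow\#_2$ (which swaps which component the tube attaches to, hence swaps $\li\leftrightarrow\lii$ and $r_1\leftrightarrow r_2$ roles) and swapping $M_1\leftrightarrow M_2$.

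The main obstacle I expect is the bookkeeping of framings and orientations that produces the factor $2$ in the entries $2\lk(\wh\K_k\wh f,\p_{k,i})$ and the halves $\tfrac{\x}{2}$, i.e.\ making rigorous the claim that the twisting diffeomorphism $A_R$ contributes exactly a doubling in the linking computation while the self-linking Hopf invariant $h(\p_{k,i})$ contributes nothing. I would isolate this into a single careful local model: a standardly embedded $S^1\times D^2$ in $S^3$, its image under $R$, the induced framed meridian $\p_{k,i}$, and the spanning disk $\Delta_{\omega,k,i}$ of Fig.\ref{f:omega}, and compute linking numbers there once and for all; every table entry is then an instance of that model glued into the global picture via Claim \ref{clm:WZeifert}. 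The remaining steps are routine once this local computation is pinned down.
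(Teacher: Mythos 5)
Your skeleton (row by row, cell by cell, using Lemmas \ref{lem:Lass} and \ref{lem:conn}, Claims \ref{clm:untwist} and \ref{clm:WZeifert}, and a WLOG reduction by symmetry) matches the paper's, and your treatment of the $\wh{W}_k$, $\wh{\K}_k$ columns and of the cells $(1,\li)$, $(1,r_1)$, $(2,r_1)$, $(2,r_2)$ is essentially the paper's argument. But the entries $2\lk(\wh{\K}_1\wh{f},\p_{1,i})$ in cells $(2,\li)$ and $(1,\lii)$ are exactly where your proposal has a genuine gap. Both cells require evaluating a difference of the form $\lx(A,B\#C)-\lx(A,B)$, i.e.\ the linking coefficient is being perturbed in its \emph{second} argument, and the paper explicitly warns (the Borromean-rings remark after Lemma \ref{lem:Lass}) that $\lx$ is \emph{not} additive there: $\lx(A,B\#C)\neq\lx(A,B)+\lx(A,C)$ in general. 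Your phrase ``$\lii$ changes by $\lx$ of the second component against this new $\omega_{1,i}$-summand'' is precisely this false additivity; taken literally it would give $\lx(F_2,\omega)=0$ (disjoint balls), not $2\lk(\wh{\K}_1\wh{f},\p_{1,i})$, so your own answer cannot be reached by the mechanism you describe. Moreover you attribute the factor $2$ to the twisting diffeomorphism $A_R$; in fact $A_R$ is responsible only for $h(\p_{k,i})=0$ (Claim \ref{clm:untwist}), i.e.\ for the \emph{absence} of a self-linking correction, and a purely local model around one solid torus cannot produce the global term $\lk(\wh{\K}_1\wh{f},\p_{1,i})$, which depends on the whole embedding.

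The missing idea is the reformulation of $\li$ as a Hopf invariant of a framed intersection: if $\Delta_2$ is a framed Seifert disk for the second component, then $\li(g)=h\bigl(g^{-1}(gS^3_1\cap\Delta_2)\bigr)$ (Lemma \ref{lem:Lnew} in the paper). Under $F_2\mapsto F_2\#\omega$ the Seifert disk becomes a disjoint-union-type disk meeting $F_1S^3$ in $b\sqcup w$ with $w=\p_{1,i}$, and the quadratic identity $h(b\sqcup w)=h(b)+h(w)+2\lk(b,w)$ produces the $2$ as the cross-term; $h(w)=0$ kills the rest, and Claim \ref{clm:WZeifert} identifies $[b]$ with $\wh{\K}_1\wh{f}$. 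Cell $(1,\lii)$ then still needs a separate step you have not supplied: $\lx(F_2,F_1\#\omega)$ must be converted into expressions additive in the \emph{first} argument by writing $2r(F_1\#F_2\#\omega)$ in two ways via Lemma \ref{lem:conn} and cancelling, after which the cell $(2,\li)$ computation can be quoted. Without Lemma \ref{lem:Lnew} and this symmetrization, the two hardest entries of the table are not actually derived.
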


We shall refer to the cells of the table by their respective row number and column title. E.g., cell (1,$\li$) contains $0$ and means that $\li(\sigma_R(\wh{f}\#_1\omega_{1,i}))-\li(\sigma_R(\wh{f}))=0$; cell (3,$\wh{W}_2$) contains $[\p_{2,j}]$ and means that $\wh{W}_2(\wh{f}\#_2\omega_{2,j})-\wh{W}_2(\wh{f})=[\p_{2,j}]$; etc.

\begin{lem}[Linking]
\label{lem:Linking}
For any $k\in\{1,2\}$, integers $a_i$, and isotopy class $[\wh{f}]\in \wh{E}^6(\wh{M_1}\sqcup\wh{M_2})$ the following implication holds $$\overset{m_k}{\underset{i=1}{\Sigma}}a_i[\p_{k,i}]=0 \text{\quad}\Rightarrow\text{\quad} \overset{m_k}{\underset{i=1}{\Sigma}}a_i\x_{k,i}(\wh{f}) = \overset{m_k}{\underset{i=1}{\Sigma}}2\lk(\p_{k,i}, \wh{W}_k\wh{f}).$$
\end{lem}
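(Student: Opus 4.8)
\textbf{Plan of the proof of Linking Lemma \ref{lem:Linking}.}
The plan is to reduce the identity to a purely three-dimensional statement about the framed circles $\p_{k,i}\subset S^3$ and the Whitney invariant $\wh{W}_k\wh{f}$, represented by an honest $1$-submanifold of $\mathrm{Int}\,\wh{M_k}$. The key point is that $\x_{k,i}(\wh{f})=\lx(\omega_{k,i},(\sigma_R\wh{f})_k)$ is a linking coefficient in $S^6$, and it should be computable as a linking number in $S^3$ via the Seifert-type picture: $\omega_{k,i}(S^3)=s_{k,i}(0\times S^3)$ bounds the disk $\Delta_{\omega,k,i}=s_{k,i}(0\times D^4)$ in $D^6_-$, and by construction $\sigma_R\wh{f}$ maps the $k$-th solid tori via $s_{R,k}=A_R\circ s_k$, while $\wh{M_k}$ goes into $D^6_+$. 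First I would identify, using Claim \ref{clm:WZeifert} (with $\Delta_k$ a generic Seifert manifold for $\wh{f}(\wh{M_k})$ in $D^6_+$), the homology class $\wh{W}_k\wh{f}$ with an explicit submanifold and compute $\lk(\p_{k,i},\wh{W}_k\wh{f})$ as the intersection number of $\p_{k,i}$ with this Seifert surface, equivalently the linking number in $S^3$.

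Second, I would compute $\x_{k,i}(\wh{f})$ directly. Since $(\sigma_R\wh{f})_k$ is built by gluing $\wh{f}(\wh{M_k})\subset D^6_+$ to $s_{R,k}(T_{m_k})\subset D^6_-$, and $\omega_{k,i}$ sits inside the ball $B_{k,i}$ bounding the disk $\Delta_{\omega,k,i}$ disjoint from all the other $s_{k,j}$-images, the linking coefficient $\lx(\omega_{k,i},(\sigma_R\wh{f})_k)$ counts the (signed) intersections of $(\sigma_R\wh{f})_k$ with $\Delta_{\omega,k,i}$, weighted by homotopy classes; because $\Delta_{\omega,k,i}\subset D^6_-$, only the part $s_{R,k}(T_{m_k})$ of $(\sigma_R\wh{f})_k$ can meet it, so $\x_{k,i}(\wh{f})$ is determined by how the tube $s_{R,k}$ and the framed meridians $\p_{k,i}$ — here is where $R$ and $A_R$ enter, exchanging meridian and parallel — wind around $\omega_{k,i}$. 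Expressing this in $S^3$, $\x_{k,i}(\wh{f})$ becomes a sum of linking numbers $\lk(\p_{k,i},\cdot)$ of the $\p_{k,i}$ with the classes $\K_k\wh{f}$ and $W_k\wh{f}$ and among themselves; the cross-terms among distinct $\p_{k,i}$ are exactly what the hypothesis $\sum a_i[\p_{k,i}]=0$ in $H_1(\wh{M_k})$ is designed to kill.

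Concretely, I expect an identity of the shape $\x_{k,i}(\wh{f})=2\lk(\p_{k,i},\wh{W}_k\wh{f})+\sum_j c_{ij}\lk(\p_{k,i},\p_{k,j})+(\text{self-term involving }h(\p_{k,i}))$ to hold before summing, where the matrix $(\lk(\p_{k,i},\p_{k,j}))$ together with the framings records the linking form that presents $H_1(M_k)$; Claim \ref{clm:untwist} ($h(\p_{k,i})=0$) disposes of the self-linking contribution. Then, since $\sum_i a_i[\p_{k,i}]=0$ in $H_1(\wh{M_k})\subset H_1(S^3\setminus(\text{other tori}))$ means precisely that $\sum_i a_i\p_{k,i}$ bounds in the complement, we get $\sum_i a_i\lk(\p_{k,i},\p_{k,j})=0$ for every $j$, hence $\sum_{i,j}a_ia_j\lk(\p_{k,i},\p_{k,j})=0$ and more to the point $\sum_i a_i\big(\sum_j c_{ij}\lk(\p_{k,i},\p_{k,j})\big)=0$ because $c_{ij}$ is (up to the symmetric structure) the same linking matrix. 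Multiplying the pre-summation identity by $a_i$ and summing over $i$ then yields exactly $\sum_i a_i\x_{k,i}(\wh{f})=\sum_i 2a_i\lk(\p_{k,i},\wh{W}_k\wh{f})$, which is the claim.

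\textbf{Main obstacle.} The delicate step is the second one: pinning down the precise geometric formula for $\x_{k,i}(\wh{f})$ as a combination of linking numbers in $S^3$, in particular getting the factor $2$ and the coefficients $c_{ij}$ right. This requires carefully tracking the framing of $\p_{k,i}=P_{k,i}R\m$ through the gluing diffeomorphism $A_R$ (which swaps meridian and parallel on the torus) and through the standardized embedding $s_{R,k}$, and then correctly interpreting the resulting element of $\pi_3(S^2)=\Z$ via the Hopf invariant / linking-number dictionary — essentially the same bookkeeping that underlies the entries of the Calculation Lemma \ref{lem:calc}. Once that formula is established, the homological reduction via $\sum a_i[\p_{k,i}]=0$ is routine. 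I would in fact try to deduce the Linking Lemma as a formal consequence of (a refinement of) the Calculation Lemma together with the relation $\sum a_i[\p_{k,i}]=0$, so as not to repeat the Seifert-surface analysis twice.
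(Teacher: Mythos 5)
Your proposal has a genuine gap at exactly the point you flag as the ``main obstacle,'' and the way the paper gets around that obstacle is not the direct $S^3$-computation you envision. Your first step --- that $\x_{k,i}(\wh{f})$ should equal $2\lk(\p_{k,i},\wh{W}_k\wh{f})$ plus a term independent of $\wh{f}$ --- is correct and corresponds to the paper's auxiliary Lemma \ref{lem:Linking'} ($\x_{k,i}(\wh{f'})-\x_{k,i}(\wh{f})=2\lk(\p_{k,i},\wh{W}_k\wh{f'}-\wh{W}_k\wh{f})$), which is itself a nontrivial computation using Claim \ref{clm:xy}, Lemmas \ref{lem:Lass}, \ref{lem:conn} and the Hopf-invariant description of $\lx$ (Lemma \ref{lem:Lnew}); note that Lemma \ref{lem:Lnew} only applies when the component is unknotted, so even this step needs the device of writing $[\wh{f'}]$ as $[\wh{f}\#_1g_1\#_2g_2]\#[g]$ with trivial $g_1,g_2$ rather than choosing a Seifert disk for a general $(\sigma_R\wh{f})_k$. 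But the heart of the lemma is then the vanishing of the ``constant term,'' i.e.\ of $\sum_i a_i\x_{k,i}(\wh{f}^0)$ under the hypothesis $\sum_i a_i[\p_{k,i}]=0$. You posit that $\x_{k,i}(\wh{f}^0)=\sum_j c_{ij}\lk(\p_{k,i},\p_{k,j})+(\text{self-term})$ with $c_{ij}$ ``essentially the linking matrix,'' but you never derive this formula or its coefficients, and your cancellation argument silently replaces the sum $\sum_i a_i c_{ij}\lk(\p_{k,i},\p_{k,j})$ by the bilinear expression $\sum_{i,j}a_ia_j\lk(\p_{k,i},\p_{k,j})$, which is not what appears unless $c_{ij}$ has a very specific form. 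As written, the crucial identity is an expectation, not a proof.

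The paper closes this gap by an indirect, non-computational argument that imports an external result. Setting $\wh{f'}:=\wh{f}^0\#_k\sum a_i\omega_{k,i}$, the Calculation Lemma gives $\wh{\WL}(\wh{f'})=\wh{\WL}(\wh{f}^0)$ (using $\sum a_i[\p_{k,i}]=0$) and $r_k(\sigma_R\wh{f'})-r_k(\sigma_R\wh{f}^0)=\sum_i a_i\x_{k,i}(\wh{f}^0)/2$; by Claim \ref{clm:xy} this difference equals $r_k(g)$ for some $g$ with $[\wh{f}^0]\#[g]=[\wh{f'}]$. The Preimage Lemma shows $\sigma([\wh{f'}])=\sigma([\wh{f}^0])$, so $g$ stabilizes $\sigma([\wh{f}^0])$, and since $W_k(f^0)=0$ has divisibility $0$, Theorem \ref{thm:Sk}(III) (Skopenkov's classification for connected $3$-manifolds) forces $r_k(g)=0$. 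This use of Theorem \ref{thm:Sk}(III) is the essential input missing from your proposal; without it you would have to actually carry out the framing bookkeeping you defer, and it is not clear that the termwise formula you conjecture is even the right one.
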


Denote by $[\p_{k,i}]_{\partial\wh{M_k}}$ the respective homology class in $H_1(\partial\wh{M_k})$. 

Consider the following part of the Mayer--Vietoris long exact sequence $$H_2(M_k)\xrightarrow{\partial} H_1(\partial \wh{M_k})\xrightarrow{i_{\wh{M_k}}, i_{T_{m_k}}} H_1(\wh{M_k})\oplus H_k(T_{m_1}),$$
where the maps $i_{\wh{M_k}}$ and $i_{T_{m_k}}$ are induced by the inclusions $\partial\wh{M_k}\subset\wh{M_k}$ and $\partial\wh{M_k}\subset T_{m_k}$.

\begin{clm}
\label{clm:delta}
The image of $\partial:H_2(M_k)\rightarrow H_1(\partial \wh{M_k})$ is the subgroup of $H_1(\partial \wh{M_k})$ consisting of all linear combinations of the form $\underset{i=1}{\overset{m_k}{\Sigma}}a_i[\p_{k,i}]_{\partial\wh{M_k}}$ such that $\underset{i=1}{\overset{m_k}{\Sigma}}a_i[\p_{k,i}]=0\in H_1(\wh{M_k})$.
\end{clm}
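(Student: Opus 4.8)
\textbf{Proof proposal for Claim \ref{clm:delta}.}

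The plan is to identify the image of the connecting map $\partial$ in the displayed Mayer--Vietoris sequence by a direct diagram chase, using the geometry of the gluing $M_k = \wh{M_k}\cup_{P_k(x)=R(x)} T_{m_k}$. The key point is that $\partial\wh{M_k}$ is a disjoint union of $m_k$ tori, so $H_1(\partial\wh{M_k})\cong \Z^{2m_k}$ is free with an explicit basis: on the $i$-th torus, take the framed circle $\p_{k,i}=P_{k,i}R\m$ (the $R$-image of a meridian of $S^1\times D^2$), together with a second, complementary circle $\q_{k,i}$ (the $R$-image of a parallel). Exactness gives $\operatorname{im}(\partial)=\ker(i_{\wh{M_k}}\oplus i_{T_{m_k}})$, so I must describe which linear combinations $\sum_i(a_i[\p_{k,i}]_{\partial\wh{M_k}}+b_i[\q_{k,i}]_{\partial\wh{M_k}})$ die simultaneously in $H_1(\wh{M_k})$ and in $H_1(T_{m_k})$.

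First I would compute the two inclusion-induced maps on the chosen basis. Inside $T_{m_k}=\bigsqcup S^1\times D^2$, the circle $\q_{k,i}$ (image of a parallel $S^1\times *$ of $S^1\times D^2$ under $R$, hence a \emph{meridian} $*\times S^1$ disk boundary of the solid torus — wait, $R$ swaps them) bounds the meridian disk and is therefore null-homologous in $T_{m_k}$, while $\p_{k,i}$ (image of a meridian under $R$, hence a core-parallel circle $S^1\times *$) generates $H_1$ of the $i$-th solid torus. So $i_{T_{m_k}}$ sends $[\q_{k,i}]\mapsto 0$ and $[\p_{k,i}]\mapsto$ the $i$-th generator of $H_1(T_{m_k})\cong\Z^{m_k}$; consequently a class in $\ker(i_{T_{m_k}})$ must have all $\p$-coefficients zero, i.e. $\ker(i_{T_{m_k}})$ is spanned by the $[\q_{k,i}]$. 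Next, since each $P_{k,i}$ is the standard unknotted solid torus in $S^3$, the circle $\q_{k,i}$ — being a meridian $*\times S^1$ of the standardly embedded $P_k(T_{m_k})$, i.e. the boundary of a disk in $P_{k,i}(S^1\times D^2)$ pushed off — actually bounds a disk in $S^3\setminus\wh{M_k}$, hence is null-homologous in $\wh{M_k}$ as well. Therefore $i_{\wh{M_k}}$ already kills every $[\q_{k,i}]$. It remains only to see what $i_{\wh{M_k}}$ does to the $[\p_{k,i}]$: by definition $[\p_{k,i}]\in H_1(\wh{M_k})$ is exactly the class the claim refers to, so $i_{\wh{M_k}}(a_i[\p_{k,i}]_{\partial\wh{M_k}}) = a_i[\p_{k,i}]$.

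Combining: a class $\sum_i(a_i[\p_{k,i}]_{\partial\wh{M_k}}+b_i[\q_{k,i}]_{\partial\wh{M_k}})$ lies in $\ker(i_{\wh{M_k}}\oplus i_{T_{m_k}})$ iff $b_i$ are forced to $0$ by the $T_{m_k}$-condition (this uses that the $\p$-images are a $\Z$-basis of $H_1(T_{m_k})$, so there is no cancellation) — no wait, the condition from $i_{T_{m_k}}$ is just $\sum_i a_i[\p_{k,i}]\mapsto 0$ in $H_1(T_{m_k})$, which since those are a basis forces every $a_i=0$; that would make the image trivial, which is wrong. The resolution, and the step I expect to be the main obstacle, is to get the \emph{roles of $\p$ and $\q$ straight under $R$}: it is $\p_{k,i}$ that bounds in $T_{m_k}$ (being $R$ of a meridian, hence a meridian-turned-longitude that is a boundary in the \emph{other} solid-torus picture) and $\q_{k,i}$ that survives there. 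Once the bookkeeping is fixed so that $\ker(i_{T_{m_k}})$ is spanned by the $[\p_{k,i}]_{\partial\wh{M_k}}$, the $T_{m_k}$-condition is automatic on those and the only remaining constraint is $i_{\wh{M_k}}$: $\sum_i a_i[\p_{k,i}]_{\partial\wh{M_k}}\in\ker$ iff $\sum_i a_i[\p_{k,i}]=0$ in $H_1(\wh{M_k})$, which is precisely the asserted description of $\operatorname{im}(\partial)$. I would therefore carry out the argument by: (1) fixing the explicit torus basis and tracking $R$; (2) computing $i_{T_{m_k}}$ and identifying its kernel; (3) computing $i_{\wh{M_k}}$ on that kernel using standardness of $P_k$; (4) reading off $\operatorname{im}(\partial)=\ker(i_{\wh{M_k}}\oplus i_{T_{m_k}})$ by exactness. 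The delicate part throughout is step (1)–(2): making sure the meridian/parallel conventions from \S\ref{SS:lambda}'s definition of $\m$ and the diffeomorphism $R$ are applied consistently, so that the surviving generators in $H_1(\partial\wh{M_k})$ modulo $\ker(i_{T_{m_k}})$ are exactly the $[\p_{k,i}]$.
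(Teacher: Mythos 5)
Your final argument, after the ``no wait'' self-correction, is exactly the paper's: by exactness of Mayer--Vietoris, ${\rm Im}(\partial)=\ker(i_{\wh{M_k}}\oplus i_{T_{m_k}})$; since the gluing identifies $P_k(x)$ with $R(x)$, the curve $\p_{k,i}=P_{k,i}(R\m)$ is attached to $R(R\m)=\m$, a meridian of the re-glued solid torus, so it dies in $H_1(T_{m_k})$, while your $\q_{k,i}=P_{k,i}(R\p)$ is attached to the core-parallel and maps to the $i$-th generator of $H_1(T_{m_k})\cong\Z^{m_k}$; hence $\ker(i_{T_{m_k}})$ is precisely the span of the $[\p_{k,i}]_{\partial\wh{M_k}}$, and on that span the only remaining condition is $\sum_i a_i[\p_{k,i}]=0$ in $H_1(\wh{M_k})$. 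So the approach and the conclusion agree with the paper.

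One assertion in your write-up is genuinely false, although it is not load-bearing: you claim that $\q_{k,i}$ (the meridian of $P_{k,i}(S^1\times D^2)$) is null-homologous in $\wh{M_k}$ because it bounds a disk in $S^3\setminus\wh{M_k}$. A disk lying in the \emph{complement} of $\wh{M_k}$ tells you nothing about the class in $H_1(\wh{M_k})$ itself; in fact, by Alexander duality these meridians form a basis of $H_1(\wh{M_k})\cong\Z^{m_k}$ (the paper relies on their being nonzero, e.g.\ in the proof of the Bijectivity Lemma, where it computes linking numbers of the classes $[\m_{1,i}]$). Fortunately the value of $i_{\wh{M_k}}$ on the $[\q_{k,i}]_{\partial\wh{M_k}}$ is never needed: once $\ker(i_{T_{m_k}})$ is identified with the span of the $[\p_{k,i}]_{\partial\wh{M_k}}$, only the restriction of $i_{\wh{M_k}}$ to that span enters, and there your computation $i_{\wh{M_k}}([\p_{k,i}]_{\partial\wh{M_k}})=[\p_{k,i}]$ is correct. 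You should simply delete the erroneous sentence rather than rely on it.
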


\begin{proof}
From the construction of $M_k$ it is clear, that ${\rm Ker}(i_{T_{m_k}})$ consists exclusively of all linear combinations of $[\p_{k,i}]_{\partial\wh{M_k}}$.

By the definition, $i_{\wh{M_k}}([\p_{k,i}]_{\partial\wh{M_k}})=[\p_{k,i}]$, so any linear combination of the form $\underset{i=1}{\overset{m_k}{\Sigma}}a_i[\p_{k,i}]_{\partial\wh{M_k}}$ is in ${\rm Ker}(i_{\wh{M_k}})$ if and only if $\underset{i=1}{\overset{m_k}{\Sigma}}a_i[\p_{k,i}]=0\in H_1(\wh{M_k})$.

Now the claim follows from the exactness of the Mayer--Vietoris sequence above.
\end{proof}

\begin{clm}
\label{clm:alpha}
Take any $\alpha\in H_2(M_k)$. By Claim \ref{clm:delta}, $\partial\alpha=\underset{i=1}{\overset{m_k}{\Sigma}}a_i[\p_{k,i}]_{\partial\wh{M_k}}$ for some integers $a_i$. Then for any $[\wh{f}]\in \wh{E}^6(\wh{M_1}\sqcup\wh{M_2})$ and $[f]:=\sigma([\wh{f}])$
\begin{itemize}
\item [(I)] $\K_kf\cap\alpha=\underset{i=1}{\overset{m_k}{\Sigma}}a_i\lk(\wh{\K}_k\wh{f},\p_{k,i})$,
\item [(II)] $W_kf\cap\alpha=\underset{i=1}{\overset{m_k}{\Sigma}}a_i\frac{\x_{k,i}}{2}$.
\end{itemize}
\end{clm}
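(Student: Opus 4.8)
\textbf{Proof proposal for Claim \ref{clm:alpha}.}

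The plan is to compute the cap products $\K_k f \cap \alpha$ and $W_k f \cap \alpha$ geometrically, by producing a convenient explicit representative of the homology class $\alpha \in H_2(M_k)$ and intersecting it with the double-point set (resp. the $W_k$-set) of a general position homotopy. First I would represent $\alpha$ by a closed oriented surface $S \subset M_k$. Using the decomposition $M_k = \wh{M_k}\cup T_{m_k}$, and after a small isotopy, I can assume $S$ meets $T_{m_k}$ in a union of parallel copies of meridian disks inside the $i$-th solid torus, the number (with sign) of these disks in the $i$-th component being exactly the coefficient $a_i$ appearing in $\partial\alpha = \sum_i a_i[\p_{k,i}]_{\partial\wh{M_k}}$. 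Indeed, $\partial(S\cap\wh{M_k}) = \partial(S\cap T_{m_k})$ in $H_1(\partial\wh{M_k})$, and under the identification $P_k\circ R$ the meridian of the $i$-th solid torus corresponds to $\p_{k,i}$, so the count of meridian disks is forced to be $a_i$. Thus $S\cap\wh{M_k}$ is a surface $\wh S \subset\wh{M_k}$ whose boundary consists of $a_i$ parallel copies of $\p_{k,i}$ (with sign) for each $i$.

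Next I would compute the two cap products by intersecting with $\wh S$. For part (I): by Claim \ref{clm:Wrep} the class $\K_k f$ is represented by a $1$-submanifold $c \subset \wh{M_k}$ that also represents $\wh{\K}_k\wh f$. Since $\alpha\cap$ (cap product) with a class represented by $c$ can be computed as the algebraic intersection number $S\cdot c$ inside $M_k$, and since $c$ lies in $\wh{M_k}$ while $S\cap\wh{M_k} = \wh S$, we get $\K_k f\cap\alpha = c\cdot \wh S$ computed in $\wh{M_k}\subset S^3$. Now $c \cdot \wh S$ equals the linking number in $S^3$ of $c$ with $\partial\wh S = \sum_i a_i\p_{k,i}$, which is $\sum_i a_i\lk(c,\p_{k,i}) = \sum_i a_i\lk(\wh{\K}_k\wh f,\p_{k,i})$, as desired; here I use that $c$ bounds in the complementary region (the solid tori plus a collar) so intersection with $\wh S$ is detected purely by how $\partial\wh S$ links $c$. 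For part (II): by the equivalent definition of the Whitney invariant in \S\ref{SS:whitney} and the definition of $\li$/$\lx$, the class $W_k f$ relates to the linking coefficients $\x_{k,i}$; more precisely I would use the geometric interpretation that $W_k f$ is represented by a $1$-cycle whose linking with $\p_{k,i}$ records $\x_{k,i}(\wh f)/2$, paralleling the entries in column $\wh W_k$ versus the entries in columns $\li,\lii$ of Calculation Lemma \ref{lem:calc}. Then the same linking-number computation as above gives $W_k f\cap\alpha = \sum_i a_i\lk(W_k f,\p_{k,i}) = \sum_i a_i\frac{\x_{k,i}}{2}$. Equivalently, part (II) can be deduced by combining part (I)'s method with Linking Lemma \ref{lem:Linking} applied to the relation $\sum_i a_i[\p_{k,i}] = 0$ coming from $\partial\alpha$ being a boundary.

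I expect the main obstacle to be making precise the identity ``$\K_k f\cap\alpha = $ linking number of (representative of $\K_k f$) with $\partial\wh S$ in $S^3$''. This requires being careful about three things: first, that the representative of $\K_k f$ can simultaneously be taken inside $\wh{M_k}$ (handled by Claim \ref{clm:Wrep}); second, that Poincar\'e duality on the closed manifold $M_k$ lets us compute $a\cap\alpha$ as a transverse intersection number for the chosen representatives; and third, the sign bookkeeping — orientations of the meridian disks, of $\p_{k,i}$, of $\wh S$, and the linking-number conventions must all be consistent, and getting the factor/sign in front of $\x_{k,i}$ right in part (II) is the delicate point (this is where the $\tfrac12$ from Lemma \ref{lem:conn} and Haefliger's theorem enters). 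Once the surface representative $\wh S$ with $\partial\wh S = \sum a_i\p_{k,i}$ is in hand, both (I) and (II) reduce to the single computation ``intersection number with $\wh S$ in $\wh{M_k}\subset S^3$ equals linking number with $\partial\wh S$,'' applied to the representative of $\K_k f$ (giving (I)) and to the representative of $W_k f$ (giving (II)).
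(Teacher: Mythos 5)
Your proposal is correct and follows essentially the same route as the paper: replace $\K_kf$ (resp.\ $W_kf$) by a representative in $\wh{M_k}$ via Claim \ref{clm:Wrep}, compute the cap product with $\alpha$ as the linking number with $\partial\alpha=\sum_i a_i[\p_{k,i}]_{\partial\wh{M_k}}$, and for (II) invoke Linking Lemma \ref{lem:Linking} (your ``equivalently'' route; the first sketch for (II), asserting that $\lk(W_kf,\p_{k,i})=\x_{k,i}/2$ term by term, is only justified for the sum under $\sum_i a_i[\p_{k,i}]=0$, which is exactly what the Linking Lemma provides). Your explicit surface representative $\wh S$ with $\partial\wh S=\sum_i a_i\p_{k,i}$ just unpacks what the paper compresses into ``by the definition of $\lk$.''
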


\begin{proof}
{\bf (I).} Follows from $$\K_kf\cap\alpha=\wh{\K}_k\wh{f}\cap\alpha=\lk(\wh{\K}_k\wh{f},\partial\alpha)=\underset{i=1}{\overset{m_k}{\Sigma}}a_i\lk(\wh{\K}_k\wh{f},\p_{k,i}).$$
The first equality holds by Claim \ref{clm:Wrep}. The second equality holds by the definition of $\lk$.

{\bf (II).} Follows from $$W_kf\cap\alpha=\wh{W}_k\wh{f}\cap\alpha=\lk(\wh{W}_k\wh{f},\partial\alpha)=\underset{i=1}{\overset{m_k}{\Sigma}}a_i\lk(\wh{W}_k\wh{f},\p_{k,i})=\underset{i=1}{\overset{m_k}{\Sigma}}a_i\frac{\x_{k,i}}{2}.$$
The first equality holds by Claim \ref{clm:Wrep}. The second equality holds by the definition of $\lk$. The last equality holds by Linking Lemma \ref{lem:Linking}, which we can apply because $\overset{m_k}{\underset{i=1}{\Sigma}}a_i[\p_{k,i}]=0$ by Claim \ref{clm:delta}.
\end{proof}

\begin{proof}[Proof of the ``if'' statement in part (III) of Theorem \ref{thm:main}]
Until the end of the proof identify $E^6(S^3\sqcup S^3)$ with $\wt{\Z^4}$ by the isomorphism $\li\times\lii\times r_1\times r_2$.

Let $f:M_1\sqcup M_2\rightarrow S^6$ be an embedding. Let $g:S^3\sqcup S^3\rightarrow S^6$ be an embedding such that $[g]\in {\rm Stab}_f\subset \wt{\Z^4}$. We need to prove that $[f]=[f]\#[g]$.

By the definition of ${\rm Stab}_f$, there are $\alpha,\beta\in H_2(M_1)$ and $\gamma,\delta\in H_2(M_2)$ such that $[g]=[g_\alpha]+[g_\beta]+[g_\gamma]+[g_\delta]$, where
\begin{itemize}
\item $[g_\alpha]=(0,2\K_1f\cap \alpha,W_1f\cap \alpha, 0)\in \wt{\Z^4}$,
\item $[g_\beta]=(2\K_1f\cap \beta,2W_1f\cap \beta,0, 0)\in \wt{\Z^4}$,
\item $[g_\gamma]=(2\K_2f\cap \gamma,0,0,W_2f\cap \gamma)\in \wt{\Z^4}$,
\item $[g_\delta]=(2W_2f\cap \delta,2\K_2f\cap \delta,0, 0)\in \wt{\Z^4}$.
\end{itemize}

It is enough to prove that $[f]=[f]\#[g_\alpha]$, $[f]=[f]\#[g_\beta]$, $[f]=[f]\#[g_\gamma]$, and $[f]=[f]\#[g_\delta]$. We shall only prove the first equality because the proofs of others are analogous.

By Claim \ref{clm:delta}, there are integers $a_i$ such that $\partial\alpha=\underset{i=1}{\overset{m_1}{\Sigma}}a_i[\p_{1,i}]_{\partial\wh{M_1}}$. By Surjection Lemma \ref{lem:surj}, there is an embedding $\wh{f}:\wh{M_1}\sqcup\wh{M_2}\rightarrow D^6_+$ such that $\sigma([\wh{f}])=[f]$. Denote $$[\wh{f'}]:=[\wh{f}\underset{i=1}{\overset{m_1}{\#_1}}a_i\omega_{1,i}].$$ 
Now the equality $[f]=[f]\#[g_\alpha]$, which we want to prove, follows from 
$$[f]=\sigma([\wh{f}])\overset{(1)}{=}\sigma([\wh{f'}])\overset{(2)}{=}\sigma([\wh{f}]\# [g_\alpha])\overset{(3)}{=}\sigma([\wh{f}])\# [g_\alpha]=[f]\#[g_\alpha],$$
where (1) follows by Preimage Lemma \ref{lem:pre} and (3) follows by $\#$-commutativity Claim \ref{clm:comm}. Equation (2) follows from
$$\wh{\WL}([\wh{f'}])\overset{(4)}{=}\wh{\WL}([\wh{f}]\# [g_\alpha])$$
and
$$\sigma_R([\wh{f'}])\overset{(5)}{=}\sigma_R([\wh{f}]\# [g_\alpha])$$
by Bijection Lemma \ref{lem:bij}. It remains to prove (4) and (5).

Now (4) follows from
$$\wh{\WL}([\wh{f'}])-\wh{\WL}([\wh{f}]\# [g_\alpha])=\wh{\WL}([\wh{f'}])-\wh{\WL}([\wh{f}])=\underset{i=1}{\overset{m_1}{\Sigma}}a_i[\p_{1,i}]=0,$$
where the second equality follows by the definiton of $[\wh{f'}]$ and Calculation Lemma \ref{lem:calc}, cells (1,$\wh{W}_1$-$\wh{\K}_2$). The last equality holds by Claim \ref{clm:delta}.

And (5) follows from
\begin{multline*}
\sigma_R([\wh{f'}])-\sigma_R([\wh{f}]\# [g_\alpha])=\sigma_R([\wh{f'}])-\sigma_R([\wh{f}])-[g_\alpha]=\\
=(0, 2\underset{i=1}{\overset{m_1}{\Sigma}}a_i\lk(\wh{\K}_1\wh{f},\p_{1,i}), \underset{i=1}{\overset{m_1}{\Sigma}}a_i\frac{\x_{1,i}(\wh{f})}{2}, 0)-[g_\alpha]=\\
=(0,2\K_1f\cap \alpha,W_1f\cap \alpha, 0)-(0,2\K_1f\cap \alpha,W_1f\cap \alpha, 0)=0\in\wt{\Z^4},
\end{multline*}
where the second equality holds by Calculation Lemma \ref{lem:calc}, cells (1,$\li$-$r_2$). The third equality holds by Claim \ref{clm:alpha} and by the definition of $[g_\alpha]$.
\end{proof}

\begin{proof}[Proof of the ``only if'' statement in part (III) of Theorem \ref{thm:main}]
Until the end of the proof identify $E^6(S^3\sqcup S^3)$ with $\wt{\Z^4}$ by the isomorphism $\li\times\lii\times r_1\times r_2$.

Let $f:M_1\sqcup M_2\rightarrow S^6$ be an embedding. Let $g:S^3\sqcup S^3\rightarrow S^6$ be an embedding such that $[f]=[f]\#[g]$. We need to prove that $[g]\in {\rm Stab}_f$.

By Surjection Lemma \ref{lem:surj}, there is an embedding $\wh{f}:\wh{M_1}\sqcup\wh{M_2}\rightarrow D^6_+$ such that $\sigma([\wh{f}])=[f]$. By $\#$-commutativity Claim \ref{clm:comm}, $\sigma([\wh{f}]\#[g])=\sigma([\wh{f}])\#[g]=[f]\#[g]=[f]$. 

So both $[\wh{f}]$ and $[\wh{f}]\#[g]$ are $\sigma$-preimages of $[f]$. By Preimage Lemma \ref{lem:pre}, there are integers $a_i$, $b_i$, $c_j$, and $d_j$ such that $$[\wh{f}]\#[g]=[\wh{f}\underset{i=1}{\overset{m_1}{\#_1}}a_i\omega_{1,i} \underset{i=1}{\overset{m_1}{\#_2}}b_i\omega_{i,1} \underset{j=1}{\overset{m_2}{\#_2}}c_j\omega_{2,j} \underset{j=1}{\overset{m_2}{\#_1}}d_j\omega_{2,j}].$$ 
Clearly $\wh{\WL}([\wh{f}])=\wh{\WL}([\wh{f}]\#[g])$. From that, the equation above, and Calculation Lemma \ref{lem:calc} (last four columns of the table) we get that $$\underset{i=1}{\overset{m_1}{\Sigma}}a_i[\p_{1,i}]=\underset{i=1}{\overset{m_1}{\Sigma}}b_i[\p_{1,i}]=\underset{j=1}{\overset{m_2}{\Sigma}}c_j[\p_{2,j}]=\underset{j=1}{\overset{m_2}{\Sigma}}d_j[\p_{2,j}]=0.$$

By Claim \ref{clm:delta}, there is $\alpha\in H_2(M_1)$ such that $\partial\alpha=\underset{i=1}{\overset{m_1}{\Sigma}}a_i[\p_{1,i}]_{\partial\wh{M_1}}$. The definitions of $\beta\in H_2(M_1)$, $\gamma,\delta\in H_2(M_2)$ are analogous but with $(a_i, \p_{1,i})$ replaced by $(b_i, \p_{1,i})$, $(c_j, \p_{2,j})$, and $(d_j, \p_{2,j})$, respectively.

The statement of the theorem now follows from
\begin{multline*}
\sigma_R([\wh{f}])+[g] \overset{(1)}{=} \sigma_R([\wh{f}]\#[g])=\\
=\sigma_R([\wh{f}\underset{i=1}{\overset{m_1}{\#_1}}a_i\omega_{1,i} \underset{i=1}{\overset{m_1}{\#_2}}b_i\omega_{i,1} \underset{j=1}{\overset{m_2}{\#_2}}c_j\omega_{2,j} \underset{j=1}{\overset{m_2}{\#_1}}d_j\omega_{2,j}]) \overset{(2)}{=}\\
\overset{(2)}{=}\sigma_R([\wh{f}\underset{i=1}{\overset{m_1}{\#_1}}a_i\omega_{1,i} \underset{i=1}{\overset{m_1}{\#_2}}b_i\omega_{i,1} \underset{j=1}{\overset{m_2}{\#_2}}c_j\omega_{2,j}])+(2W_2f\cap \delta,2\K_2f\cap \delta,0, 0) \overset{(3)}{=}\\
\overset{(3)}{=}\sigma_R([\wh{f}\underset{i=1}{\overset{m_1}{\#_1}}a_i\omega_{1,i} \underset{i=1}{\overset{m_1}{\#_2}}b_i\omega_{i,1}]) + (2\K_2f\cap \gamma,0,0,W_2f\cap \gamma) + (2W_2f\cap \delta,2\K_2f\cap \delta,0, 0) \overset{(4)}{=}\\
\overset{(4)}{=}\sigma_R([\wh{f}\underset{i=1}{\overset{m_1}{\#_1}}a_i\omega_{1,i}]) + (2\K_1f\cap \beta,2W_1f\cap \beta,0, 0) + \\ 
(2\K_2f\cap \gamma,0,0,W_2f\cap \gamma) + (2W_2f\cap \delta,2\K_2f\cap \delta,0, 0) \overset{(5)}{=}\\
\overset{(5)}{=}\sigma_R([\wh{f}]) + (0,2\K_1f\cap \alpha,W_1f\cap \alpha, 0)+(2\K_1f\cap \beta,2W_1f\cap \beta,0, 0)+\\
+(2\K_2f\cap \gamma,0,0,W_2f\cap \gamma)+(2W_2f\cap \delta,2\K_2f\cap \delta,0, 0) \\
\Downarrow \\
[g]=(0,2\K_1f\cap \alpha,W_1f\cap \alpha, 0)+(2\K_1f\cap \beta,2W_1f\cap \beta,0, 0)+\\
+(2\K_2f\cap \gamma,0,0,W_2f\cap \gamma)+(2W_2f\cap \delta,2\K_2f\cap \delta,0, 0)\in {\rm Stab}_f.
\end{multline*}
Here (1) follows by $\#$-commutativity Claim \ref{clm:comm}. It remains to prove (2-5). Let us only prove (5) as (2-4) are proved analogously. Equation (5) is equivalent to $$\sigma_R([\wh{f}\underset{i=1}{\overset{m_1}{\#_1}}a_i\omega_{1,i}])-\sigma_R([\wh{f}])=(0,2\K_1f\cap \alpha,W_1f\cap \alpha, 0)\in\wt{\Z^4},$$
which in turn follows by Calculation Lemma \ref{lem:calc}, cells (1, $\li$-$r_2$), and by Claim \ref{clm:delta}.
\end{proof}

\bigskip
\section{Proof of Surjectivity, Bijectivity, and Preimage Lemmas \ref{lem:surj}, \ref{lem:bij}, \ref{lem:pre}.}
Throughout this section we denote by $q_{k,i}$ the circle $S^1\times 0$ in the $i$-th connected component of $T_{m_k}$, see Fig.\ref{f:q}.
	\begin{figure}[H]
	\begin{center} 
	\includegraphics{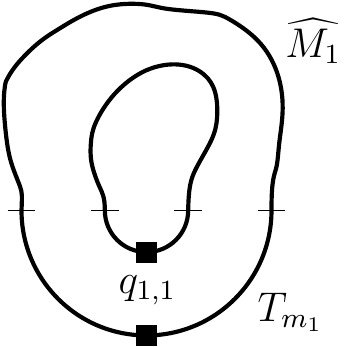} 
	\end{center} 
	\caption{The circle $q_{1,1}$ represented by a pair of squares.}
	\label{f:q} 
	\end{figure}
\smallskip
\subsection{Proof of Surjectivity Lemma \ref{lem:surj}.}
\begin{proof}[Proof of Surjectivity Lemma \ref{lem:surj}.]
We only prove that the map $\sigma$ is surjective. The map $\sigma_R$ is surjective by an analogous argument.

Choose an arbitrary embedding $f:M_1\sqcup M_2\rightarrow S^6$.
By general position, there are $2$-disks $\Delta_{k,i}$ in $S^6$ (see Fig.\ref{f:surj_lemma}), such that
\begin{itemize}
\item $\partial \Delta_{k,i}=f(q_{k,i})$,
\item interiors of all $\Delta_{k,i}$ are pairwise disjoint and are disjoint with $f(M_1\sqcup M_2)$. 
\end{itemize}

	\begin{figure}[H]
	\begin{center} 
	\includegraphics{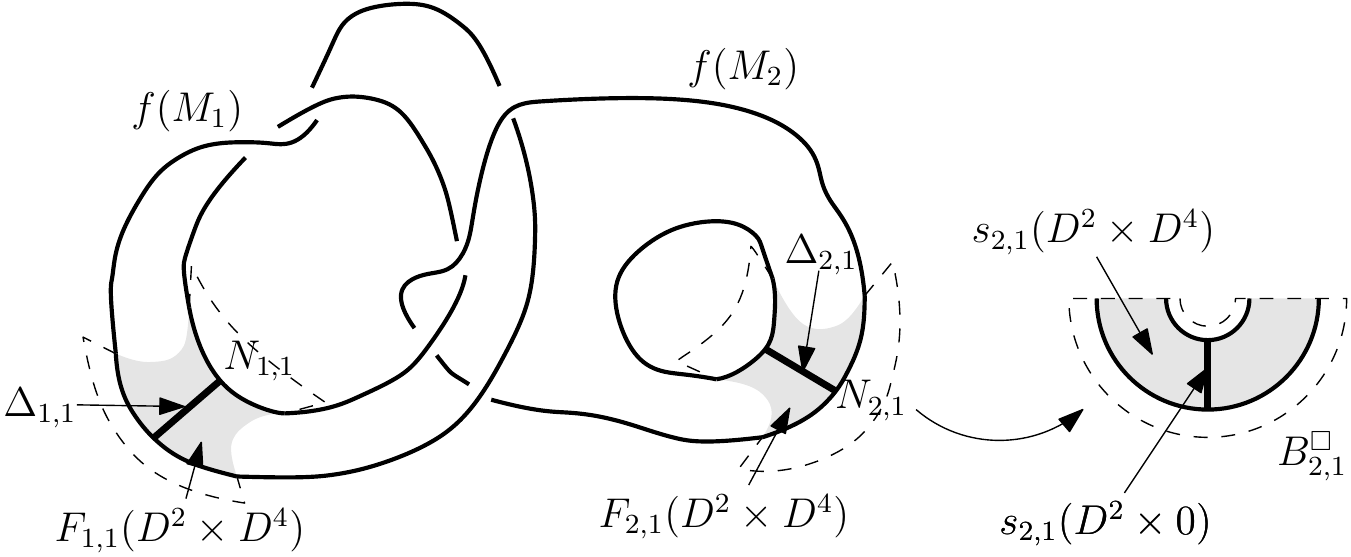} 
	\end{center} 
	\caption{Proof of Surjectivity Lemma \ref{lem:surj}.}
	\label{f:surj_lemma} 
	\end{figure}

The restriction of $f$ to the $i$-th component of $T_{m_k}$ can be extended to an embedding $F_{k,i}:D^2\times D^4\rightarrow S^6$ such that (see Fig.\ref{f:surj_lemma})
\begin{itemize}
\item $F_{k,i}(D^2\times 0)=\Delta_{k,i}$,
\item ${\rm Im}(F_{k,i})\cap {\rm Im}(f)=F_{k,i}(S^1\times D^2)$,
\item images of all the $F_{k,i}$ are pairwise disjoint.
\end{itemize}
Indeed, the obstruction to an extension to $D^2\times D^2$ is in $\pi_1(V_{4,2})=0$. Having $F_{k,i}$ already defined on $D^2\times D^2$ we can extend it to $D^2\times D^4$ without any obstructions.

Let $N_{k,i}$ be a tubular neighbourhood modulo $F_{k,i}(D^2\times S^3)$ of $F_{k,i}(D^2\times D^4)$ (see Fig.\ref{f:surj_lemma}). We can choose all $N_{k,i}$ to be pairwise disjoint and such that $N_{k,i}\cap f(\wh{M_1}\sqcup\wh{M_2})=F_{k,i}(S^1\times D^2)$. By construction, $F_{k,i}:D^2\times D^4\rightarrow N_{k,i}$ is isotopic to the composition of $s_{k,i}:D^2\times D^4\rightarrow B^{\square}_{k,i}$ with some diffeomorphism $B^{\square}_{k,i}\rightarrow N_{k,i}$, see the right part of Fig.\ref{f:surj_lemma}. There is a $6$-ball $B$ containing all of $N_{k,i}$, interior of $B$ being also disjoint with $f(\wh{M_1}\sqcup\wh{M_2})$.

Apply an ambient isotopy of $S^6$ which maps $B$ to $D^6_-$, each $N_{k,i}$ to $B^{\square}_{k,i}$, and each $F_{k,i}$ to $s_{k,i}$.

Denote by $f'$ the result obtained from  $f$ by the isotopy. By construction, $f'$ is in the image of $\sigma$.
\end{proof}

\smallskip
\subsection{Proof of the ``only if'' part of Preimage Lemma \ref{lem:pre}.}

We need the following Claim to prove the ``only if'' part of Preimage Lemma \ref{lem:pre}.
\begin{clm}
\label{clm:istd}
Let $[\wh{f}],[\wh{f'}]\in \wh{E}^6(\wh{M_1}\sqcup\wh{M_2})$ be isotopy classes. Suppose that embeddings $\sigma(\wh{f})$ and $\sigma(\wh{f'})$ are isotopic. Then there is a concordance between $\sigma(\wh{f})$ and $\sigma(\wh{f'})$ fixed on $T_{m_1}\sqcup T_{m_2}$.
\end{clm}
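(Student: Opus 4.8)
The plan is to upgrade the given isotopy to one that is literally \emph{fixed} on $T_{m_1}\sqcup T_{m_2}$; its track is then a concordance fixed on $T_{m_1}\sqcup T_{m_2}$, which is (slightly) stronger than what is claimed. Write $T:=T_{m_1}\sqcup T_{m_2}$ and $s:=s_1\sqcup s_2$, so that $\sigma(\wh f)|_T=\sigma(\wh f')|_T=s$. Choose an isotopy $g_t\colon M_1\sqcup M_2\to S^6$ with $g_0=\sigma(\wh f)$ and $g_1=\sigma(\wh f')$. Restricting to $T$ gives a path $t\mapsto g_t|_T$ in the embedding space $\mathrm{Emb}(T,S^6)$ that starts and ends at $s$, i.e. a loop $\ell$ based at $s$.

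The key point I would establish is that $\pi_1\big(\mathrm{Emb}(T,S^6),s\big)=0$, so that $\ell$ is null-homotopic. Since $s$ sends the two components of $T$ into disjoint $6$-balls, and since the complement in $S^6$ of an embedded disjoint union of solid tori (a $3$-dimensional, hence codimension $3$, submanifold) is $3$-connected by general position and Alexander duality, restricting one component at a time reduces this to $\pi_1\big(\mathrm{Emb}(S^1\times D^2,S^6)\big)=0$ (replacing $S^6$ by a $3$-connected open submanifold affects neither $\pi_0$ nor $\pi_1$ of the space of embeddings of the homotopy $1$-dimensional manifold $S^1\times D^2$, by the same dimension count). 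Restriction to the core circle is a fibration $\mathrm{Emb}(S^1\times D^2,S^6)\to\mathrm{Emb}(S^1,S^6)$ whose fibre is homotopy equivalent to $\mathrm{Map}(S^1,V_{5,2})$ (the space of trivialized $2$-plane subbundles of the trivial rank $5$ normal bundle of a circle); this fibre is connected and simply connected because $\pi_1(V_{5,2})=\pi_2(V_{5,2})=0$. On the base, the Smale--Hirsch theorem identifies $\mathrm{Imm}(S^1,S^6)$ with $\mathrm{Map}(S^1,V_{7,2})$, which is simply connected since $V_{7,2}$ is $4$-connected, and the forgetful map $\mathrm{Emb}(S^1,S^6)\to\mathrm{Imm}(S^1,S^6)$ has highly connected homotopy fibre in this codimension, hence is a $\pi_1$-isomorphism. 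The long exact sequences of these two fibrations then give $\pi_1\big(\mathrm{Emb}(S^1\times D^2,S^6)\big)=0$. All the Stiefel manifold vanishings used here are of the kind already invoked elsewhere in the paper.

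Granting this, I would finish by assembly. The evaluation $\mathrm{Diff}(S^6)\to\mathrm{Emb}(T,S^6)$, $\Phi\mapsto\Phi\circ s$, is a fibration by isotopy extension (cf.\ \cite[Theorem $2.1$]{Hu70}), with fibre over $s$ the group $\mathrm{Diff}(S^6,s(T))$ of diffeomorphisms of $S^6$ that fix $s(T)$ pointwise. Applying isotopy extension to the isotopy $g_t|_T$ (which extends the embedding $g_0|_T=s$) yields an ambient isotopy $\Phi_t$ of $S^6$ with $\Phi_0=\mathrm{id}$ and $\Phi_t\circ s=g_t|_T$; in particular $\Phi_1\circ s=g_1|_T=s$, so $\Phi_1\in\mathrm{Diff}(S^6,s(T))$, and its class in $\pi_0\big(\mathrm{Diff}(S^6,s(T))\big)$ equals the image of $[\ell]$ under the connecting homomorphism of that fibration. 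Since $[\ell]=0$, the diffeomorphism $\Phi_1$ is isotopic to $\mathrm{id}$ through diffeomorphisms fixing $s(T)$ pointwise, say via $\Theta_u$ with $\Theta_0=\Phi_1$, $\Theta_1=\mathrm{id}$. Then the concatenation of the isotopy $t\mapsto\Phi_t^{-1}\circ g_t$ (from $\sigma(\wh f)$ to $\Phi_1^{-1}\circ\sigma(\wh f')$) with the isotopy $u\mapsto\Theta_u^{-1}\circ\sigma(\wh f')$ (from $\Phi_1^{-1}\circ\sigma(\wh f')$ to $\sigma(\wh f')$) is an isotopy from $\sigma(\wh f)$ to $\sigma(\wh f')$ which on $T$ is constantly $s$, because $\Phi_t^{-1}\circ g_t|_T=\Phi_t^{-1}\circ\Phi_t\circ s=s$ and $\Theta_u^{-1}$ fixes $s(T)$ pointwise. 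Its track is the required concordance fixed on $T_{m_1}\sqcup T_{m_2}$.

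The main obstacle is the computation $\pi_1(\mathrm{Emb}(T,S^6),s)=0$: equivalently, one must rule out that precomposing an embedding agreeing with $s$ on $T$ by an ambient diffeomorphism which fixes $s(T)$ pointwise and is isotopic to the identity can change its isotopy class relative to $T$. Everything else is routine bookkeeping with isotopy extension. Should the borderline-metastable homotopy computation prove delicate, a safer fallback — sufficient for the statement, which only asks for a concordance — is to work with concordance classes throughout: the concordance $(x,t)\mapsto(\Phi_t(\sigma(\wh f)(x)),t)$ already interpolates between $\sigma(\wh f)$ and $\sigma(\wh f')$, and straightening the self-concordance it induces on $T$ to the product requires only the vanishing of the relevant concordance group, a quotient of the $\pi_1$ above.
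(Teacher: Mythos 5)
Your proposal is correct in outline, but it takes a genuinely different route from the paper's. The paper argues directly on the track of the given isotopy: it first uses general position (codimension $5$) to make the concordance $F'$ constant on each core circle $q_{k,i}=S^1\times 0$, and then observes that the remaining datum --- the normal $2$-frame of $q_{k,i}\times I$ inside the rank-$5$ normal bundle of $F'(q_{k,i}\times I)$ in $S^6\times I$ --- is a map $u:S^1\times I\to V_{5,2}$ constant on $S^1\times\partial I$, hence null-homotopic rel boundary since $\pi_1(V_{5,2})=\pi_2(V_{5,2})=0$; this makes the concordance constant on a tubular neighbourhood of each core, which suffices. You instead compute $\pi_1\bigl(\mathrm{Emb}(T_{m_1}\sqcup T_{m_2},S^6),s\bigr)=0$ (via the restriction fibration to the cores, Smale--Hirsch, and the $(6-2\cdot1-1)$-connectivity of $\mathrm{Emb}(S^1,\cdot)\hookrightarrow\mathrm{Imm}(S^1,\cdot)$) and feed this into the isotopy-extension fibration $\mathrm{Diff}(S^6)\to\mathrm{Emb}(T,S^6)$. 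Both arguments bottom out in the same vanishing $\pi_1(V_{5,2})=\pi_2(V_{5,2})=0$ (you additionally use the $4$-connectivity of $V_{7,2}$), so they are close cousins; but yours buys a strictly stronger conclusion --- an \emph{isotopy}, not merely a concordance, fixed on $T_{m_1}\sqcup T_{m_2}$ --- at the cost of substantially more machinery (Palais--Cerf fibration properties of restriction maps and of $\mathrm{Diff}(S^6)\to\mathrm{Emb}(T,S^6)$, Smale--Hirsch), whereas the paper's proof needs only general position and elementary obstruction theory over the annulus $q_{k,i}\times I$. The one step you should spell out more carefully is the multi-component reduction: one must run the restriction fibrations whose fibres are embeddings of a single solid torus into the complement of the previously placed ones, and check (as you indicate via Alexander duality and the $3$-connectivity of that complement) that the single-torus computation survives the replacement of $S^6$ by such a complement; with that elaboration I see no gap.
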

\begin{proof}
Denote $f:=\sigma(\wh{f})$ and $f':=\sigma(\wh{f'})$. By the definition of $\sigma$, we have that $f|_{T_{m_1}\sqcup T_{m_2}}=f'|_{T_{m_1}\sqcup T_{m_2}}=s_1\sqcup s_2$.

Clearly, it suffices to find a concordance between $f$ and $f'$ fixed on some tubular neighbourhood of each circle $q_{k,i}$.

Let $F:(M_1\sqcup M_2)\times I\rightarrow S^6$ be an isotopy between $f$ and $f'$. By general position, $F$ is isotopic relative to the boundary to some concordance $F'$ fixed on each $q_{k,i}$.

At each point of $F'(q_{1,1}\times I)$ identify with ${\mathbb R}^5$ the normal to $F'(q_{1,1}\times I)$ space in $S^6\times I$. The restriction of $F'$ to a small tubular neighbourhood of $q_{1,1}\times I$ gives us then a map $u:S^1\times I\rightarrow V_{5,2}$. We can choose the identification so that $u$ is constant on $S^1\times\partial I$. 

Let $\bar{u}:\frac{S^1\times I}{S^1\times\partial I}\rightarrow V_{5,2}$ be the quotient map. Space $\frac{S^1\times I}{S^1\times\partial I}$ is homotopically equivalent to $S^2\vee S^1$. From $\pi_2(V_{5,2})=\pi_1(V_{5,2})=0$ it follows that $\bar{u}$ is null-homotopic. Therefore $u$ is homotopic relative $S^1\times\partial I$ to the constant map.

It implies that isotopying $F'$ in a small tubular neighbourhood of $q_{1,1}\times I$ we can make $F'$ constant on this tubular neighbourhood. Doing this for all $k,i$ we get the required concordance.
\end{proof}

\begin{proof}[Proof of the ``only if'' part of Preimage Lemma \ref{lem:pre}.]
Suppose that $\sigma([\wh{f}])=\sigma([\wh{f}'])$ for some $[\wh{f}],[\wh{f}']\in \wh{E}^6(\wh{M_1}\sqcup\wh{M_2})$. Denote $f:=\sigma(\wh{f})$ and $f':=\sigma(\wh{f}')$.

By Claim \ref{clm:istd}, there is a concordance $F$ between $f$ and $f'$, fixed on $T_{m_1}\sqcup T_{m_2}$.

Denote $\Delta_{k,i}:=s_{k,i}(D^2\times 0)$. Disks $\Delta_{k,i}$ are pairwise disjoint, $\partial \Delta_{k,i}=f(q_{k,i})=f'(q_{k,i})=F_t(q_{k,i})$ for every $t\in I$, the interior of each $\Delta_{k,i}$ is disjoint with $s_1(T_{m_1})\sqcup s_2(T_{m_2})$.

For any $n$ and any two general position submanifolds $A,B\subset S^n$, ${\dim A}+{\dim B}=n$, denote by $\#|A\cap B|$ the algebraic number of points of intersection $A\cap B$. For each $\Delta_{k,i}$ denote by $\delet{\Delta_{k,i}}$ its interior.

For $1\leq i \leq m_1$, $1\leq j \leq m_2$ define
$$a_i:=\#|\delet{\Delta_{1,i}}\times I\cap F(M_1\times I)|,\text{\quad} b_i:=\#|\delet{\Delta_{1,i}}\times I\cap F(M_2\times I)|,$$ 
$$c_j:=\#|\delet{\Delta_{2,j}}\times I\cap F(M_2\times I)|,\text{\quad} d_j:=\#|\delet{\Delta_{2,j}}\times I\cap F(M_1\times I)|.$$

Denote
$$\wh{f''}:=\wh{f}\underset{i=1}{\overset{m_1}{\#_1}}a_i\omega_{1,i} \underset{i=1}{\overset{m_1}{\#_2}}b_i\omega_{i,1} \underset{j=1}{\overset{m_2}{\#_2}}c_j\omega_{2,j} \underset{j=1}{\overset{m_2}{\#_1}}d_j\omega_{2,j},$$
and
$$f'':=\sigma(\wh{f''}).$$

It remains to prove that $[\wh{f'}]=[\wh{f''}]$.

By construction, there is an isotopy $F''$ between $f$ and $f''$ which ``drags'' spheres $\omega_{1,i}$ and $\omega_{2,j}$ along pairwise disjoint embedded disks $s_{1,i}(0\times D^4)$ and $s_{2,j}(0\times D^4)$ for all $i$ and $j$. 
Isotopy $F''$ is fixed on $T_{m_1}\sqcup T_{m_2}$. We have that 
$$\#|\delet{\Delta_{1,i}}\times I\cap F''(M_1\times I)| = a_i,\text{\quad} \#|\delet{\Delta_{1,i}}\times I\cap F''(M_2\times I)| = b_i,$$
$$\#|\delet{\Delta_{2,j}}\times I\cap F''(M_2\times I)| = c_j,\text{\quad} \#|\delet{\Delta_{2,j}}\times I\cap F''(M_1\times I)| = d_j.$$

Consider now the concordance $H:=-F\cup F''$ between $f'$ and $f''$. By construction, $H$ is fixed on $T_{m_1}\sqcup T_{m_2}$ and 
$$\#|\delet{\Delta_{1,i}}\times I\cap H(M_1\times I)| = 0,\text{\quad} \#|\delet{\Delta_{1,i}}\times I\cap H(M_2\times I)| = 0,$$ 
$$\#|\delet{\Delta_{2,j}}\times I\cap H(M_2\times I)| = 0,\text{\quad} \#|\delet{\Delta_{2,j}}\times I\cap H(M_1\times I)| = 0.$$

So, using the Whitney trick (\cite[Theorem $6.6$]{Mi65}), we can isotope $H$, changing it only on $(\wh{M_1}\sqcup\wh{M_2})\times {\rm Int}I$, to some concordance $H'$ whose image is disjoint with each $\delet{\Delta_{1,i}}\times I$ and $\delet{\Delta_{2,j}}\times I$.

Now we can ``push'' the image of $H'$ away from each $\Delta_{1,i}\times I$ along the vectors of the normal framing of $\Delta_{1,i}\times I$ given by the embeddings $s_{1,i}(D^2\times D^4)$ (recall that $s_{k,i}(D^2\times 0)=\Delta_{k,i}$ by the definition of $\Delta_{k,i}$). Likewise we can ``push'' the image of $H'$ away from each $\Delta_{2,j}\times I$.

We obtain a new concordance $H''$ between $f'$ and $f''$ such that $H''((\wh{M_1}\sqcup\wh{M_2})\times I)\subset D^6_+\times I$. The restriction of $H''$ to $(\wh{M_1}\sqcup\wh{M_2})\times I$ is a concordance between $\wh{f'}$ and $\wh{f''}$ in $D^6_+$ fixed on the boundary. In codimension at least $3$ concordance implies isotopy, see \cite[Theorem $2.1$]{Hu70}, therefore $[\wh{f'}]=[\wh{f''}]$.
\end{proof}

\smallskip
\subsection{Proof of Bijectivity Lemma \ref{lem:bij}.}

To prove the Bijectivity Lemma \ref{lem:bij} we shall need the following analogue of Preimage Lemma \ref{lem:pre}.

For all $k\in\{1,2\}$ and $1\leq i \leq m_k$ define 
$$\omega_{R,k,i}:S^3\rightarrow S^6 \text{\quad by the formula \quad} \omega_{R,k,i}:=s_{R,k,i}|_{0\times S^3}.$$
 
\begin{lem}[Preimage']
\label{lem:pre'}
For any $[\wh{f}],[\wh{f}']\in \wh{E}^6(\wh{M_1}\sqcup\wh{M_2})$ we have that $\sigma_R([\wh{f}])=\sigma_R([\wh{f}'])$ if and only if
$$[\wh{f}']=[\wh{f}\underset{i=1}{\overset{m_1}{\#_1}}a_i\omega_{R,1,i} \underset{i=1}{\overset{m_1}{\#_2}}b_i\omega_{R,i,1} \underset{j=1}{\overset{m_2}{\#_2}}c_j\omega_{R,2,j} \underset{j=1}{\overset{m_2}{\#_1}}d_j\omega_{R,2,j}]$$ 
for some integers $a_i$, $b_i$, $c_j$, and $d_j$.
\end{lem}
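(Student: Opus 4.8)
The plan is to deduce Preimage' Lemma \ref{lem:pre'} from the proof of Preimage Lemma \ref{lem:pre} by the substitution $s_k\mapsto s_{R,k}$, $s_{k,i}\mapsto s_{R,k,i}$, $\omega_{k,i}\mapsto\omega_{R,k,i}$, and to check that every geometric ingredient used in the proof of Lemma \ref{lem:pre} survives this substitution. Recall that $s_{R,k}=A_R\circ s_k$ with $A_R$ an orientation preserving diffeomorphism of $D^6_-$ fixing each $B_{k,i}$ setwise, and that on $\partial T_{m_k}$ one has $s_{R,k}=s_k\circ R$; the latter is exactly what makes $\sigma_R$ well defined on the \emph{same} set $\wh E^6(\wh{M_1}\sqcup\wh{M_2})$ as $\sigma$, so the substitution really does convert a proof of Lemma \ref{lem:pre} into a proof of Lemma \ref{lem:pre'}.

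First I would record the analogues of the structural facts about $s_k$: since $A_R$ is an orientation preserving diffeomorphism fixing the $B_{k,i}$, the restriction of $s_{R,k}$ to each $*\times D^4$ is isotopic to the standard proper embedding $D^4\to D^6_-$ and the $i$-th component of its domain lands in $B_{k,i}$; the disks $\Delta_{R,k,i}:=s_{R,k,i}(D^2\times 0)$ are pairwise disjoint, have interiors disjoint from $s_{R,1}(T_{m_1})\sqcup s_{R,2}(T_{m_2})$, and come equipped with the normal framing induced by $s_{R,k,i}(D^2\times D^4)$; and, because $0\in D^2$ does not lie on $S^1=\partial D^2$, the disk $s_{R,k,i}(0\times D^4)$ -- whose boundary is $\omega_{R,k,i}(S^3)$ -- has interior disjoint from ${\rm Im}(\sigma_R(\wh f))$ for every $[\wh f]\in\wh E^6(\wh{M_1}\sqcup\wh{M_2})$. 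These are precisely the properties of $s_k$, $\Delta_{k,i}$, $\omega_{k,i}$ exploited in the proof of Preimage Lemma \ref{lem:pre}. The ``if'' direction is then immediate: dragging $\omega_{R,k,i}(S^3)$ along the disk $s_{R,k,i}(0\times D^4)$ gives an isotopy between $\sigma_R(\wh f\#_k\pm\omega_{R,k,i})$ and $\sigma_R(\wh f)$, and iterating handles arbitrary coefficients.

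For the ``only if'' direction the first step is the analogue of Claim \ref{clm:istd} for $\sigma_R$: if $\sigma_R(\wh f)$ and $\sigma_R(\wh f')$ are isotopic then, since both restrict to $s_{R,1}\sqcup s_{R,2}$ on $T_{m_1}\sqcup T_{m_2}$, general position gives a concordance fixed on each circle $q_{k,i}$, and $\pi_1(V_{5,2})=\pi_2(V_{5,2})=0$ upgrades it to a concordance fixed on a tubular neighbourhood of each $q_{k,i}$, hence on $T_{m_1}\sqcup T_{m_2}$; the proof of Claim \ref{clm:istd} goes through unchanged. After that I would repeat the proof of the ``only if'' part of Lemma \ref{lem:pre} verbatim, with $\Delta_{k,i}$, $\omega_{k,i}$ replaced by $\Delta_{R,k,i}$, $\omega_{R,k,i}$: define $a_i,b_i,c_j,d_j$ as the algebraic intersection numbers of $\delet{\Delta_{R,1,i}}\times I$ and $\delet{\Delta_{R,2,j}}\times I$ with $F(M_1\times I)$ and $F(M_2\times I)$, set $\wh f''$ to be the corresponding multiple linked connected sum of $\wh f$ with the $\omega_{R,k,i}$, glue $-F$ with the explicit dragging isotopy $F''$ realizing these same intersection numbers to get a concordance between $\sigma_R(\wh f')$ and $\sigma_R(\wh f'')$ in which all these numbers cancel, remove the remaining intersections with the $\delet{\Delta_{R,k,i}}\times I$ by the Whitney trick (changing the concordance only on $(\wh{M_1}\sqcup\wh{M_2})\times{\rm Int}I$), push off each $\Delta_{R,k,i}\times I$ along the framing coming from $s_{R,k,i}(D^2\times D^4)$ into $D^6_+\times I$, and finish with concordance-implies-isotopy in codimension $\geq3$. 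Since no new geometry appears, the only real ``work'' is the bookkeeping just described; a tidier alternative would be to prove once a Preimage Lemma relative to an arbitrary standardization datum $(s_1,s_2)$ and then apply it both to $(s_1,s_2)$ and to $(s_{R,1},s_{R,2})$, which I do not expect to change the substance.
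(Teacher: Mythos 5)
Your proposal is correct and matches the paper, which disposes of this lemma with the single sentence that its proof is analogous to that of Preimage Lemma \ref{lem:pre}; you have simply carried out that analogy explicitly, checking that the substitution $s_k\mapsto s_{R,k}$, $\omega_{k,i}\mapsto\omega_{R,k,i}$ preserves every geometric property the original argument uses. No further comment is needed.
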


The proof of Preimage' Lemma \ref{lem:pre'} is analogous to the proof of Preimage Lemma \ref{lem:pre}.

\begin{proof}[Proof of Bijectivity Lemma \ref{lem:bij}.]
Let us first prove that the restriction $\sigma_R|_{\wh{\WL}^{-1}(x)}$ is surjective.

Choose any $[g]\in E^6(S^3\sqcup S^3)$. The map $\wh{\WL}$ is surjective, which is proved analogously to the surjectivity of $\WL$ (part (I) of Theorem \ref{thm:main}). So, we can choose some $[\wh{f}]\in \wh{E}^6(\wh{M_1}\sqcup\wh{M_2})$ such that $\wh{\WL}(\wh{f})=x$.

There is an isotopy class $[g']\in E^6(S^3\sqcup S^3)$ such that $\sigma_R([\wh{f}])\#[g']=[g]$. Then $\sigma_R([\wh{f}]\#[g'])=[g]$ and $\wh{\WL}([\wh{f}]\#[g'])=\wh{\WL}([\wh{f}])=x$.

Let us now prove that the restriction $\sigma_R|_{\wh{\WL}^{-1}(x)}$ is injective. Let $[\wh{f}],[\wh{f}']\in \wh{E}^6(\wh{M_1}\sqcup\wh{M_2})$ be some isotopy classes such that $\sigma_R([\wh{f}])=\sigma_R([\wh{f}'])$ and $\wh{\WL}(\wh{f})=\wh{\WL}(\wh{f}')=x$.

By Preimage' Lemma \ref{lem:pre'}, we have that
$$[\wh{f}']=[\wh{f}\underset{i=1}{\overset{m_1}{\#_1}}a_i\omega_{R,1,i} \underset{i=1}{\overset{m_1}{\#_2}}b_i\omega_{R,i,1} \underset{j=1}{\overset{m_2}{\#_2}}c_j\omega_{R,2,j} \underset{j=1}{\overset{m_2}{\#_1}}d_j\omega_{R,2,j}]$$
for some integers $a_i$, $b_i$, $c_j$, and $d_j$.

Similarly to $\m$ and $\p_{k,i}$ denote
$$\p:=S^1\times *\subset S^1\times D^2$$
and
$$\m_{k,i} := P_{k,i}R\p.$$

By $[\m_{1,i}]$ we denote the corresponding homology classes in $\wh{M_1}$ (analogously to $[\p_{1,i}]$).
Let us compute $\lk(\underset{i=1}{\overset{m_1}{\Sigma}}a_i[\m_{1,i}],P_{1,1}q_{1,1})$ in two ways.

Circles $\m_{k,i}$ are meridians of the pairwise disjoint embedded solid tori $P_{k,i}(S^1\times D^2)\subset S^3$ and $P_{1,1}q_{1,1}$ is the parallel of the solid torus $P_{1,1}(S^1\times D^2)\subset S^3$. Therefore, $$\lk(\underset{i=1}{\overset{m_1}{\Sigma}}a_i[\m_{1,i}],P_{1,1}q_{1,1})=a_1.$$

By the analogue of Calculation Lemma \ref{lem:calc} (cell (1, $W_1$)), we have that 
$$\wh{W}_1(\wh{f'})=\wh{W}_1(\wh{f})+\underset{i=1}{\overset{m_1}{\Sigma}}a_i[\m_{1,i}].$$
Since $\wh{W}_1(\wh{f'})=\wh{W}_1(\wh{f})$, it follows that $\underset{i=1}{\overset{m_1}{\Sigma}}a_i[\m_{1,i}]=0\in H_1(\wh{M_1})$. Circle $P_{1,1}q_{1,1}\subset S^3$ is disjoint with $\wh{M_1}\subset S^3$, therefore $$\lk(\underset{i=1}{\overset{m_1}{\Sigma}}a_i[\m_{1,i}],P_{1,1}q_{1,1})=0.$$

Combining the last two paragraphs we get that $a_1=0$. By the same argument, $a_i=b_j=c_j=d_j=0$ for all $i,j$, meaning that $[\wh{f}]=[\wh{f'}]$.
\end{proof}

\bigskip
\section{Proof of Calculation Lemma \ref{lem:calc}.}
For the proof of Calculation Lemma \ref{lem:calc} we use Lemma \ref{lem:Lnew} which can be seen as an alternative definition of the linking coefficients $\li$ and $\lii$. We shall also need additional Claim \ref{clm:C}.

\smallskip
\subsection{Definition of framed intersections and preimages.}

Let $A,B\subset S^n$ be submanifodls in general position. Suppose that $B$ is framed. Then the {\it framed intersection} $A\cap B$ is a framed submanifold of $A$. The framing of $A\cap B\subset A$ is obtained by the projection of the framing of $B$ onto the tangent space of $A$ and subsequent Gram-Schmidt orthonormalising process.

Let $f:A\rightarrow S^n$ be an embedding and let $a\subset f(A)$ be a framed submanifold of $f(A)$. Then $f^{-1}(a)$ is called a {\it framed preimage} of $a$. The framing of $f^{-1}(a)$ is the $df^{-1}$-image of the framing of $a$.

Recall that $h$ denotes the Hopf invariant of a framed $1$-submanifold of $S^3$.
\begin{lem}
\label{lem:Lnew}
Let $g:S^3_1\sqcup S^3_2\rightarrow S^6$ be an embedding, where $S^3_1$ and $S^3_2$ are two distinct copies of $S^3$. Suppose that the restriction of $g$ to each connected component is trivial. Let $\Delta_1$, $\Delta_2$ be framed embedded disks in general position bounded by $gS^3_1$ and $gS^3_2$, respectively. Then
$$\li(g)=h (g^{-1}(gS^3_1\cap \Delta_2)) \text{\quad and\quad} \lii(g)=h (g^{-1}(gS^3_2\cap \Delta_1)).$$
\end{lem}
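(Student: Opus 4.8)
The plan is to compare the geometric description of $\li(g)$ via the homotopy equivalence $h:S^6\setminus gS^3_2\to S^2$ in \S\ref{SS:lambda} with the ``Seifert disk'' description in terms of $\Delta_2$. First I would set up the standard model: since the framing of $\Delta_2$ consists of $4$ normal vectors, collapsing the complement of a tubular neighbourhood of $\Delta_2$ gives a map $S^6\setminus gS^3_2\to S^6/(S^6\setminus \nu\Delta_2)$; the Pontryagin--Thom construction identifies the homotopy class of $g|_{S^3_1}$ composed with such a collapse with the framed cobordism class of the framed $1$-manifold $g^{-1}(gS^3_1\cap\Delta_2)\subset S^3_1$, which (since $g|_{S^3_1}$ is trivial, so we may identify $S^3_1$ with $S^3$) lies in $\Omega^{fr}_1(S^3)=\pi_3(S^2)=\Z$, and under this identification the framed cobordism class of a framed circle is exactly its Hopf invariant $h$. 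So the content is: the collapse map $S^6\setminus gS^3_2\to S^2$ coming from $\Delta_2$ induces the identity on $H_2$ (equivalently, sends $[\partial D^3_g]$ to the generator), hence is homotopic to the map $h$ used in the definition of $\li$.

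The key steps, in order: (1) Recall that $S^6\setminus gS^3_2\simeq S^2$ by Alexander duality, with $H_2$ generated by the boundary of a small transverse $3$-disk $D^3_g$; pick $D^3_g$ to be a normal disk to $\Delta_2$ at an interior point, consistently oriented. (2) Build the collapse map $c_{\Delta_2}:S^6\setminus gS^3_2\to S^2$ by using a tubular neighbourhood $\nu(\Delta_2)\cong \Delta_2\times D^4$ compatible with the given framing, collapsing everything outside $\nu(\Delta_2)$ and the $\Delta_2$-direction inside, landing in $D^4/\partial D^4=S^4$... wait — the normal bundle of $\Delta_2$ in $S^6$ is $4$-dimensional, so this is not directly $S^2$; instead one collapses the complement of a neighbourhood of a single normal disk fibre and uses that $\Delta_2$ is contractible so the neighbourhood of $\Delta_2$ deformation retracts appropriately. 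The cleaner route is: the complement $S^6\setminus gS^3_2$ deformation retracts onto $S^6\setminus \nu(\Delta_2 \cup gS^3_2)$... Actually the most economical formulation is via linking: a map $S^6\setminus gS^3_2\to S^2$ inducing the identity on $H_2$ is unique up to homotopy (as already noted in \S\ref{SS:lambda}), so it suffices to exhibit \emph{any} such map for which the stated computation is visibly the Hopf invariant. (3) Take the map $c$ that sends $x$ to its ``direction relative to $\Delta_2$'': fix the framing $2$-frame structure so that on a small normal $S^5$-bundle around $\Delta_2$ the map records the $S^2$-coordinate coming from projecting onto two of the four framing directions appropriately; verify $c$ extends over all of $S^6\setminus gS^3_2$ because the obstructions vanish ($\pi_i(S^2)$ issues handled since $\Delta_2$ is a disk), and that $c_*[\partial D^3_g]$ is the generator by a local linking-number computation at the chosen normal disk. (4) Then $[c\circ g|_{S^3_1}]\in\pi_3(S^2)$ equals $\li(g)$ by definition and uniqueness, and by Pontryagin--Thom it equals the framed bordism class of $c\circ g|_{S^3_1}^{-1}(\text{regular value})=g^{-1}(gS^3_1\cap\Delta_2)$ with its induced framing, which is $h(g^{-1}(gS^3_1\cap\Delta_2))$. (5) Apply the identical argument with the roles of the two components exchanged to get the formula for $\lii$.

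The main obstacle I expect is step (2)--(3): constructing the collapse map $c:S^6\setminus gS^3_2\to S^2$ honestly from the framed disk $\Delta_2$ and checking it induces the identity on $H_2$, because the normal bundle of $\Delta_2$ has rank $4$ rather than $2$, so the ``obvious'' Pontryagin--Thom collapse naively produces a map to $S^4$, not $S^2$. Reconciling this requires using that $gS^3_2$ has a $3$-dimensional Seifert-type region: one really wants the collapse associated to the \emph{complement} picture, i.e. to use that $S^6\setminus gS^3_2\simeq S^2$ via the join/Gysin structure of the normal $S^2$-bundle of $gS^3_2$ (whose Euler class pairs with $\Delta_2$), and to relate the intersection $gS^3_1\cap\Delta_2$ to the preimage of a point under this $S^2$-bundle projection. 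Once the dimensions are tracked correctly — effectively, the framing of $\Delta_2$ restricted to $gS^3_1\cap\Delta_2$ is the data that upgrades a count of points to a framed $1$-manifold whose invariant is $h$ — the rest is routine. I would therefore organize the write-up around the uniqueness statement ``any degree-one map $S^6\setminus gS^3_2\to S^2$ on $H_2$ computes $\li$'', so that I only need to produce one convenient such map and do a single local linking computation, sidestepping a delicate global construction.
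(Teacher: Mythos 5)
Your overall strategy --- realize $\li(g)$ as the Pontryagin--Thom collapse associated to $\Delta_2$, invoke the uniqueness up to homotopy of a map $S^6\setminus gS^3_2\to S^2$ inducing the identity on $H_2$, and read off the class in $\pi_3(S^2)\cong\Z$ as the Hopf invariant of the framed preimage of a regular value --- is exactly the paper's argument (the paper compresses all of this into the observation that $\Delta_2$ is the preimage of a regular point of some homotopy equivalence $S^6\setminus gS^3_2\to S^2$). However, the ``main obstacle'' you single out in steps (2)--(3) rests on a dimension miscount. The disk $\Delta_2$ is bounded by $gS^3_2$, which is a $3$-sphere, so $\Delta_2$ is a $4$-disk; its normal bundle in $S^6$ has rank $6-4=2$, not $4$, and its framing is a normal $2$-framing. (This is also what makes $g^{-1}(gS^3_1\cap\Delta_2)$ a $2$-framed $1$-manifold in $S^3$, i.e.\ precisely the kind of object whose Hopf invariant $h$ the paper defines.) Consequently the obvious collapse --- take a tubular neighbourhood $\Delta_2\times D^2$ compatible with the framing, send its complement and $\partial D^2$ to the basepoint, and project to $D^2/\partial D^2=S^2$ --- already lands in $S^2$; there is no map to $S^4$ to reconcile, no need to ``project onto two of the four framing directions,'' and no Gysin or obstruction-theoretic detour. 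That this collapse sends $[\partial D^3_g]$ to the generator is the local computation you describe: $\partial D^3_g$ meets ${\rm Int}\Delta_2$ algebraically in $\lk(\partial D^3_g,gS^3_2)=1$ points. The preimage of the regular value $0\in D^2$ under the composite with $g|_{S^3_1}$ is then $g^{-1}(gS^3_1\cap\Delta_2)$ with the framing pulled back from $\Delta_2$, and its class in $\pi_3(S^2)\cong\Z$ (normalized by the Hopf map) is $h$ of that framed $1$-manifold.

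So the skeleton of your plan is correct and is the intended proof, but the workaround you propose for the phantom difficulty (``fix the framing $2$-frame structure so that \dots the map records the $S^2$-coordinate coming from projecting onto two of the four framing directions appropriately'') is not a construction one could actually carry out, and a write-up following it would go wrong. Excise it: once the dimensions are counted correctly the ``delicate global construction'' is the one-line collapse above. A minor further point: the hypothesis that $g$ is trivial on each component is what guarantees that embedded $4$-disks $\Delta_1,\Delta_2$ bounded by the components exist at all; it is not needed merely to identify $S^3_1$ with $S^3$.
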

\begin{proof}
We only prove the first claim as the second one is analogous. Clearly, $\Delta_2$ is the preimage of a regular point of some homotopy equivalence $S^6\setminus gS^3_2\rightarrow S^2$. Therefore $gS^3_1\cap \Delta_2$ is the preimage of the same point under the restriction of this homotopy equivalence to $gS^3_1$. The first claim of the lemma now holds by the definition of $\li$.
\end{proof}

\smallskip
\subsection{Definition of $\Delta_{\omega,k,i}$.}

Let $\Delta_{\omega,k,i}\subset \partial D^6_-$ be an embedded framed $4$-disk bounded by $\omega_{k,i}(S^3)$ and such that for any $[\wh{f}]\in \wh{E}^6(\wh{M_1}\sqcup\wh{M_2})$
$$\wh{f}^{-1}(\wh{f}\wh{M_k}\cap\Delta_{\omega,k,i})=(\sigma_R\wh{f})^{-1}(\sigma_R\wh{f}(S^3_k)\cap\Delta_{\omega,k,i})=\p_{k,i},$$
where $S^3_k$ is the $k$-th component of the domain of $\sigma_Rf$, see Fig.\ref{f:omega}. Here the ``$=$'' signs mean the equality of both sides as framed submanifolds. The first equality holds by definition of $\sigma_R$ and the second equality is a part of the definition of $\Delta_{\omega,k,i}$.

\begin{clm}
\label{clm:C}
For any $k\in\{1,2\}$, $1\leq i\leq m_k$ there exist a disk $\Delta_{\omega,k,i}\subset \partial D^6_-$ satisfying the properties above.
\end{clm}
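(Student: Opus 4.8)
Claim \ref{clm:C} asks us to construct, for each $k$ and $i$, a framed $4$-disk $\Delta_{\omega,k,i}\subset\partial D^6_-\cong S^5$ bounded by $\omega_{k,i}(S^3)$ whose framed intersection with $\sigma_R\widehat f(S^3_k)$ pulls back (via $\sigma_R\widehat f$, equivalently via $\widehat f$) to the framed circle $\p_{k,i}\subset\partial\widehat M_k$. The plan is to build $\Delta_{\omega,k,i}$ directly inside the hemisphere $\partial D^6_-$ using only the ``standard'' data $s_k$, $A_R$, and the embeddings $P_k$, and then observe that the required pullback identity is forced by the definitions of $\sigma_R$ and $\p_{k,i}=P_{k,i}R\m$.

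\medskip\noindent\emph{Step 1: reduce to a local model.} Recall $\omega_{k,i}=s_{R,k,i}|_{0\times S^3}=A_R\circ s_{k,i}|_{0\times S^3}$, and the image of $s_{k,i}$ lies in the ball $B_{k,i}\subset D^6_-$, with all the $B_{k,i}$ (over all $k,i$) pairwise disjoint and $A_R(B_{k,i})=B_{k,i}$. Since $s_{k,i}:D^2\times D^4\to B^\square_{k,i}$ is isotopic to the standard proper embedding, and everything is confined to $B_{k,i}$, it suffices to work in one ball at a time, i.e.\ to construct $\Delta_{\omega,k,i}$ inside $B_{k,i}\cap\partial D^6_-$. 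First I would choose $\Delta_{\omega,k,i}$ before applying $A_R$: take the obvious framed disk $s_{k,i}(D^2\times S^1)$-type object — more precisely, inside $s_{k,i}(D^2\times D^4)$ the sphere $s_{k,i}(0\times S^3)$ bounds many disks; I want one that meets $\widehat f(\widehat M_k)$ correctly. Here is the key geometric point: by the definition of $\widehat E^6(\widehat M_1\sqcup\widehat M_2)$, any representative $\widehat f$ satisfies $\widehat f\circ P_k|_{\partial T_{m_k}}=s_k|_{\partial T_{m_k}}$, so near the $i$-th torus the image $\widehat f(\widehat M_k)$ contains the annular region $s_{k,i}(S^1\times D^2)$ (recall the inclusion $S^1\times D^2\subset D^2\times D^4$ given by $S^1=\partial D^2\subset D^2$, $D^2\subset D^4$). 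Thus $\widehat f(\widehat M_k)$ is \emph{standard} in a neighbourhood of $s_{k,i}(S^1\times D^2)$, independently of $\widehat f$.

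\medskip\noindent\emph{Step 2: choose the disk in the standard model.} In the standard picture $D^2\times D^4$ with the sphere $0\times S^3=\partial(0\times D^4)$, pick a framed $4$-disk $\Delta^0$ in $\partial D^6_-$ bounding $\omega^0_{k,i}:=s_{k,i}(0\times S^3)$ that intersects $s_{k,i}(S^1\times D^2)$ transversally in exactly the framed circle $s_{k,i}(S^1\times\{*\})$ for a basepoint $*\in D^2\subset D^4$ chosen near $0$, with the framing matching up to the orientation conventions built into $\m$ (first vector pointing into the solid torus, second vector determined by orientation). Concretely one can take a disk of the form ``cone on $0\times S^3$ from a point, pushed to meet the annulus in one circle,'' or equivalently extend the normal framing $\m$ of the meridian of the solid torus across a meridian disk; the obstruction to extending the relevant $2$-framing along a $2$-disk lies in $\pi_1(V_{4,2})=0$, exactly as in the proof of Claim \ref{clm:wsurj} and of Surjectivity Lemma \ref{lem:surj}. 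Since $\widehat f$ is standard near $s_{k,i}(S^1\times D^2)$ and $\Delta^0$ only meets $\widehat f(\widehat M_k)$ there (arrange transversality so all intersections of $\Delta^0$ with $\widehat f(\widehat M_k)$ occur in this standard region — using general position to remove any stray intersections of $\text{Int}\,\Delta^0$ with $\widehat f(\widehat M_k)$ away from the annulus, which is possible by dimension count $4+3<6$ wait we are in $S^5$: $\dim\Delta^0+\dim\widehat f(\widehat M_k)=4+3=7$, codimension $1$ in $\partial D^6_-=S^5$, so intersections are generically curves — hence one uses instead that $\Delta^0$ can be chosen to lie in a thin neighbourhood of the standard region), we get $\widehat f^{-1}(\widehat f\widehat M_k\cap\Delta^0)=(P_{k,i}R)(\text{meridian})=\p_{k,i}$ as framed circles.

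\medskip\noindent\emph{Step 3: apply $A_R$ and conclude.} Finally set $\Delta_{\omega,k,i}:=A_R(\Delta^0)$ with the framing pushed forward by $A_R$. Since $A_R$ is an orientation-preserving diffeomorphism with $A_R\circ s_{k,i}|_{S^1\times S^1}=s_{k,i}|_{S^1\times S^1}\circ R$, and $\sigma_R\widehat f$ equals $\widehat f$ on $\widehat M_k$ while equalling $s_{R,k}=A_R\circ s_k$ on $T_{m_k}$, the set $\sigma_R\widehat f(S^3_k)\cap\Delta_{\omega,k,i}=A_R\big(\widehat f\widehat M_k\cap\Delta^0\big)$ wait — more carefully, $\Delta_{\omega,k,i}$ meets $\sigma_R\widehat f(S^3_k)$ inside $B_{k,i}$ where $\sigma_R\widehat f$ is built from $\widehat f$ and $s_{R,k}$; unwinding the definitions and using that $A_R$ intertwines $s_{k,i}$ with $s_{R,k,i}$ via $R$, the pullback $(\sigma_R\widehat f)^{-1}(\sigma_R\widehat f(S^3_k)\cap\Delta_{\omega,k,i})$ is exactly $P_{k,i}R\m=\p_{k,i}$, and agrees as a framed submanifold with $\widehat f^{-1}(\widehat f\widehat M_k\cap\Delta_{\omega,k,i})$ because $\widehat f$ and $\sigma_R\widehat f$ coincide on $\widehat M_k$. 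This gives both required equalities. The main obstacle is Step 2: correctly identifying which framed disk in the standard handle model yields the framed circle $\p_{k,i}$ (and not, say, a disk giving the wrong Hopf framing, which would spoil Calculation Lemma \ref{lem:calc}), and checking that general position lets one confine all intersections of the disk with $\widehat f(\widehat M_k)$ to the standard collar $s_{k,i}(S^1\times D^2)$; the surrounding steps are bookkeeping with the definitions of $\sigma_R$, $A_R$, and $\p_{k,i}$.
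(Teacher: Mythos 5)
The paper offers no written argument for this claim (it simply points to Fig.~\ref{f:omega}), so your attempt to supply an actual construction is welcome; however, it has two genuine problems.

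First, a bookkeeping error that matters: $\omega_{k,i}$ is defined as $s_{k,i}|_{0\times S^3}$, not $s_{R,k,i}|_{0\times S^3}$ (the latter is $\omega_{R,k,i}$, introduced only later for Lemma~\ref{lem:pre'}). Your Step~3 outputs the disk $A_R(\Delta^0)$, whose boundary is $A_R(s_{k,i}(0\times S^3))=\omega_{R,k,i}(S^3)$; nothing forces $A_R$ to preserve the sphere $s_{k,i}(0\times S^3)$, so your final disk is bounded by the wrong $3$-sphere. Relatedly, Step~1 misdescribes the geometry: $\wh f(\wh M_k)$ does not contain the solid torus $s_{k,i}(S^1\times D^2)$ (that is part of the image of $\sigma(\wh f)$, not of $\wh f$). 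What the boundary condition actually gives is $\wh f(\wh M_k)\cap\partial D^6_-=\wh f(\partial\wh M_k)=s_k(\partial T_{m_k})$, a union of $2$-tori; this single observation is what makes the required identity independent of $\wh f$, and it also dissolves your transversality worry in Step~2, since a $4$-disk in $S^5$ meets these $2$-tori generically in circles ($4+2-5=1$) and there are no other intersections with $\wh f(\wh M_k)$ to push off.

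Second, and more seriously, Step~2 --- which you yourself single out as the main obstacle --- is asserted rather than proved, and as you state it it cannot be carried out. Any $4$-chain $\Delta^0\subset\partial D^6_-$ with $\partial\Delta^0=s_{k,i}(0\times S^3)$ meets the torus $s_{k,i}(\partial D^2\times\partial D^2)$ in a $1$-cycle whose class is detected by linking numbers with $s_{k,i}(0\times S^3)$ in $S^5$: the longitude $s_{k,i}(\partial D^2\times pt)$ bounds the $2$-disk $s_{k,i}(D^2\times pt)\subset\partial D^6_-$ and hence links $s_{k,i}(0\times S^3)$ once, while the meridian $s_{k,i}(v\times\partial D^2)$ bounds a $2$-disk inside $s_{k,i}(v\times \partial D^4)$ and links it zero times. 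Consequently the class of $\Delta^0\cap s_{k,i}(\partial D^2\times\partial D^2)$ is forced to be the meridian class, not the longitude class $[s_{k,i}(S^1\times *)]$ that you prescribe; the framed circle your $\Delta^0$ cuts out therefore pulls back to $\m_{k,i}$ rather than $\p_{k,i}$, and no choice of $\Delta^0$ bounding $s_{k,i}(0\times S^3)$ can change this. The $A_R$-conjugation in Step~3 does convert meridian into longitude on the torus, but only at the cost of moving the boundary sphere to $\omega_{R,k,i}(S^3)$, as noted above. Until this meridian/longitude discrepancy is resolved --- by pinning down precisely against which of $s_{k,i}$, $s_{R,k,i}$ the sphere and the tori are measured and verifying the linking numbers in that normalization --- the construction does not establish the claim.
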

The claim is made obvious by Fig.\ref{f:omega}.

\smallskip
\subsection{Proof of Calculation Lemma \ref{lem:calc}.}

We shall prove the first two rows of the table, the proof for the second two rows is analogous.
Without a loss of generality we may assume that $i=1$. For brevity denote $\omega:=\omega_{1,1}$ and $\Delta_\omega:=\Delta_{\omega,1,1}$.

Without a loss of generality we may also assume that the restriction of $\sigma_R\wh{f}$ to the second component is trivial. Indeed, for any $g:S^3\rightarrow D^6_+$ whose image is far away from the images of $\wh{f}$ and $\wh{f'}$ we may substitute $\wh{f}$ and $\wh{f'}$ by $\wh{f}\#_2g$ and $\wh{f'}\#_2g$, respectively, without changing any of the table entries. By choosing $g$ appropriately we can always make the restriction of $\sigma_R\wh{f}$ to the second component trivial.

Let $F_k:S^3\rightarrow S^6$ be the restriction of $\sigma_Rf$ to the $k$-th component. Embedding $F_2$ is trivial by the argument in the previous paragraph.

	\begin{figure}[H] 
	\begin{center} 
	\includegraphics{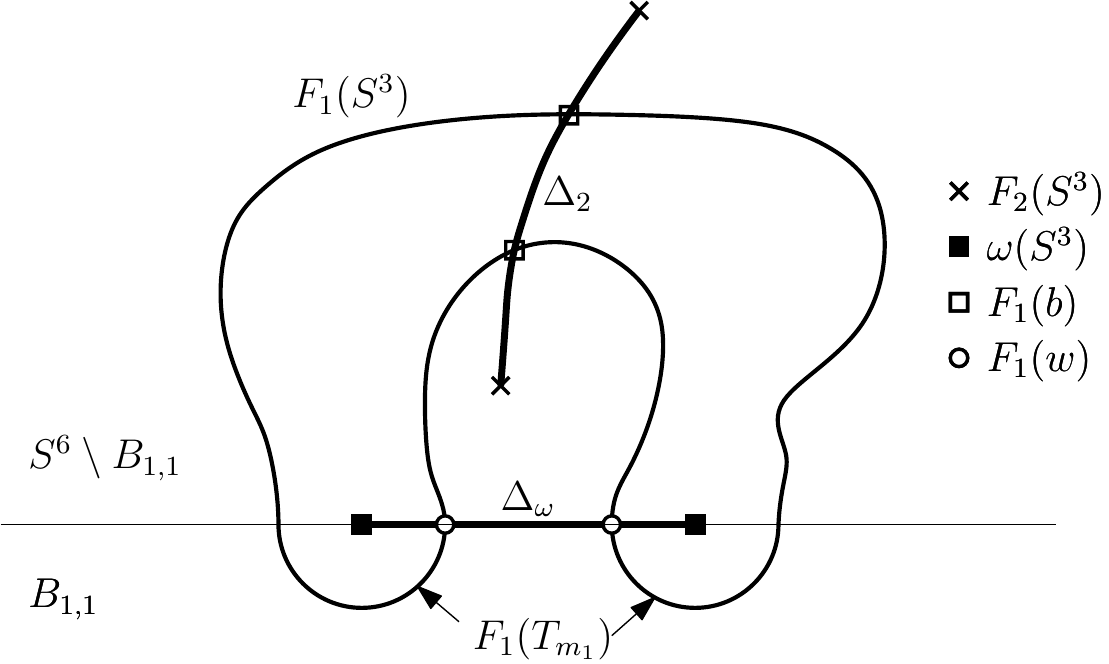} 
	\end{center} 
	\caption{Proof of Calculation Lemma \ref{lem:calc}.}
	\label{f:calc} 
	\end{figure}

Consider some embedded framed $4$-disk $\Delta_2$ in the complement to $B_{1,1}$ bounded by $F_2(S^3)$. Denote
$$w:=F_1^{-1}(F_1(S^3)\cap\Delta_{\omega}),$$
$$b:=F_1^{-1}(F_1(S^3)\cap\Delta_2).$$
Both $w$ and $b$ are framed $1$-submanifolds of $S^3$. Recall that $w=\p_{1,1}$ as a framed submanifold by Claim \ref{clm:C}.

We prove the second row of the table first.

{\bf Cell} (2, $\li$).
In this cell we need to compute $\li(\sigma_R(\wh{f}\#_2\omega_{1,1}))-\li(\sigma_R(\wh{f}))= \lx(F_1,F_2\#\omega)-\lx(F_1,F_2)$.

The disks $\Delta_2$ and $\Delta_\omega$ are disjoint by construction. So there is a framed embedded disk $\Delta_{F_2\#\omega}$, bounded by $(F_2\#\omega)(S^3)$ and such that $F_1S^3\cap \Delta_{F_2\#\omega}=(F_1S^3\cap \Delta_2)\sqcup (F_1S^3\cap \Delta_\omega)$. So by Lemma \ref{lem:Lnew} we have  
\begin{multline*}
\lx(F_1,F_2\#\omega)-\lx(F_1,F_2) = h(F_1^{-1}(F_1\cap \Delta_{F_2\#\omega}))-h(F_1^{-1}(F_1\cap \Delta_2)) = \\
= h(b\sqcup w) - h(b) = h(b) + h(w) + 2\lk(b,w) - h(b) = h(w) + 2\lk(b,w) =\\
= h(\p_{1,1}) + 2\lk(b,\p_{1,1}) = 0 + 2\lk(b,\p_{1,1}) = 2\lk(\wh{\K}_1\wh{f},\p_{1,1}).
\end{multline*}
The equation before the last holds because $h(\p_{1,1})=0$ by Claim \ref{clm:untwist}. The last equation holds by Claim \ref{clm:WZeifert} (take $\Delta_2\cap D^6_+$ as ``$\Delta$'' in the statement of the claim. Clearly, $\Delta_2\cap D^6_+$ satisfies the necessary condition by construction.).

{\bf Cell} (2, $\lii$).
In this cell we need to compute $\lii(\sigma_R(\wh{f}\#_2\omega_{1,1}))-\lii(\sigma_R(\wh{f}))= \lx(F_2\#\omega,F_1)-\lx(F_2,F_1)$.

We have
$$\lx(F_2\#\omega,F_1)-\lx(F_2,F_1)=\lx(F_2,F_1)+\lx(\omega,F_1)-\lx(F_2,F_1)=\lx(\omega,F_1)=\x_{1,1}(\wh{f}).$$
The first equation holds by Lemma \ref{lem:Lass}. The last equation is the definition of $\x_{1,1}(\wh{f})$.

{\bf Cell} (2, $r_1$).
In this cell we have $r_1(\sigma_R(\wh{f}\#_2\omega_{1,1}))-r_1(\sigma_R(\wh{f}))=0$ because the restrictions of $\sigma_R(\wh{f}\#_2\omega_{1,1})$ and $\sigma_R(\wh{f})$ to the first connected component are the same by the definition of $\#_2$.

{\bf Cell} (2, $r_2$).
By construction $\omega$ is trivial and the images of $\omega$ and $F_2$ lie in disjoint $6$-balls. So in this cell we have $r_2(\sigma_R(\wh{f}\#_2\omega_{1,1}))-r_2(\sigma_R(\wh{f}))=r(F_2\#\omega)-r(F_2)=0$.

{\bf Cells} (2,${\wh{W}_1}$-$ {\wh{\K}_2}$).
Clearly, there is a homotopy between $\wh{f}\#_2\omega$ and $\wh{f}$ which shrinks $\omega(S^3)$ along the disk $\Delta_\omega$. The disk $\Delta_\omega$ is disjoint with the image of $\wh{M_2}$ and the homotopy is the identity on $\wh{M_1}$. So $\wh{f'}=\wh{f}\#_2\omega$ and $\wh{f}$ differ at only one Whitney invariant out of four, namely
$$\wh{\K}_1(\wh{f}\#_2\omega)-\wh{\K}_1(\wh{f})=\wh{f}^{-1}[\wh{f}(\wh{M_1})\cap\Delta_\omega]=[w]=[\p_{1,1}].$$

{\bf Cell} (1, $\li$).
In this cell we need to compute $\li(\sigma_R(\wh{f}\#_1\omega_{1,1}))-\li(\sigma_R(\wh{f}))= \lx(F_1\#\omega,F_2)-\lx(F_1,F_2)$.

We have
$$\lx(F_1\#\omega,F_2) -\lx(F_1,F_2) = \lx(F_1,F_2) +\lx(\omega,F_2) -\lx(F_1,F_2) =\lx(\omega,F_2) = 0.$$
The first equation holds by Lemma \ref{lem:Lass}. The last equation holds because the images of $\omega$ and $F_2$ lie in disjoint $6$-balls.

{\bf Cell} (1, $\lii$).
In this cell we need to compute $\lii(\sigma_R(\wh{f}\#_1\omega_{1,1}))-\lii(\sigma_R(\wh{f}))= \lx(F_2,F_1\#\omega)-\lx(F_2,F_1)$.

By Lemma \ref{lem:conn} we have
$$2r(F_1\#F_2\#\omega) = \lx(F_2,F_1\#\omega)+,$$
$$2r(F_1\#F_2\#\omega) = \lx(F_2\#\omega,F_1) + \lx(F_1,F_2\#\omega) + 2r(F_2\#\omega)+2r(F_1).$$
So
$$\lx(F_2,F_1\#\omega) = \lx(F_2\#\omega,F_1) + \lx(F_1,F_2\#\omega) + 2r(F_2\#\omega)+2r(F_1) - \lx(F_1\#\omega,F_2) - 2r(F_2)-2r(F_1\#\omega).$$
Applying Lemma \ref{lem:Lass} and Lemma \ref{lem:conn} we get
\begin{multline*}
\lx(F_2,F_1\#\omega) = \lx(F_2\#\omega,F_1) + \lx(F_1,F_2\#\omega) + 2r(F_2\#\omega)+2r(F_1) - \lx(F_1\#\omega,F_2) - 2r(F_2)-2r(F_1\#\omega) = \\
= \lx(F_2,F_1) + \lx(\omega,F_1) + \lx(F_1,F_2\#\omega) + 2r(F_2)+2r(\omega)+\lx(F_2,\omega) +\lx(\omega,F_2)+2r(F_1)-\\
-\lx(F_1,F_2)-\lx(\omega,F_2)-2r(F_2)-2r(F_1)-2r(\omega) - \lx(F_1,\omega)-\lx(\omega,F_1) =\\
= \lx(F_2,F_1) + \lx(F_1,F_2\#\omega) +\lx(F_2,\omega) -\lx(F_1,F_2) - \lx(F_1,\omega) = \\ 
= \lx(F_2,F_1) + \lx(F_1,F_2\#\omega) -\lx(F_1,F_2) - \lx(F_1,\omega),
\end{multline*}
where the last equation holds because $\lx(F_2,\omega)=0$ (see paragraph ``Cell (1, $\li$)'').
So
\begin{multline*}
\lx(F_2,F_1\#\omega)-\lx(F_2,F_1) = \lx(F_2,F_1) + \lx(F_1,F_2\#\omega) -\lx(F_1,F_2) - \lx(F_1,\omega) - \lx(F_2,F_1) = \\ = \lx(F_1,F_2\#\omega) -\lx(F_1,F_2) - \lx(F_1,\omega).
\end{multline*}
From the paragraph ``Cell (2, $\li$)'' we know that $\lx(F_1,F_2\#\omega) -\lx(F_1,F_2)= 2\lk(\wh{\K}_1\wh{f},\p_{1,1})$. Also, by Lemma \ref{lem:Lnew}, $\lx(F_1,\omega)=h(w)=h(\p_{1,1})$ and by Claim \ref{clm:untwist}, $h(\p_{1,1})=0$, so $\lx(F_1,\omega)=0$. We get
$$\lx(F_2,F_1\#\omega)-\lx(F_2,F_1) = 2\lk(\wh{\K}_1\wh{f},\p_{1,1}).$$

{\bf Cell} (1, $r_1$).
In this cell we need to compute $r_1(\sigma_R(\wh{f}\#_1\omega_{1,1}))-r_1(\sigma_R(\wh{f})) = r(F_1\#\omega)-r(F_1)$. Applying Lemma \ref{lem:conn} we get
\begin{multline*}
r(F_1\#\omega)-r(F_1)=r(F_1) + r(\omega) + \frac{\lx(F_1,\omega)+\lx(\omega,F_1)}{2} -r(F_1)=\\
 = r(\omega) + \frac{\lx(F_1,\omega)+\lx(\omega,F_1)}{2}.
\end{multline*}
We know that $r(\omega)=0$ because $\omega$ is trivial. Also, $\lx(F_1,\omega)=0$, see the end of paragraph ``Cell (1, $\lii$)''. So
$$r(F_1\#\omega)-r(F_1)=\frac{\lx(\omega,F_1)}{2}=\frac{\x_{1,1}(\wh{f})}{2}$$
by the definition of $\x_{1,1}$.

{\bf Cell} (1, $r_2$).
Analogous to cell (2, $r_1$).

{\bf Cells} (1,$\wh{W}_1$-$\wh{\K}_2$).
Analogous to cells (2,$\wh{W}_1$-$\wh{\K}_2$).

\bigskip
\section{Proof of Claim \ref{clm:Wpre} and Linking Lemma \ref{lem:Linking}.}

\smallskip
\subsection{Proof of Claim \ref{clm:Wpre}.}

By Surjectivity Lemma \ref{lem:surj}, there are $\sigma$-preimages $[\wh{f}]$ and $[\wh{f'}]$ of $[f]$ and $[f']$, respectively.
The group $H_1(M_1)$ is obtained from $H_1(\wh{M_1})$ by adding the relation $[\p_{1,i}]=0$ for each $1\leq i\leq m_1$. Since $W_1(f)=W_1(f')$, it follows that $\wh{W}_1(\wh{f'})-\wh{W}_1(\wh{f})=\overset{m_1}{\underset{i=1}{\Sigma}}a_i[\p_{1,i}]$ for some integers $a_i$. 

Redefine $\wh{f}:=\wh{f}\underset{i=1}{\overset{m_1}{\#_1}}a_i\omega_{1,i}$. By Preimage Lemma \ref{lem:pre}, we still have $\sigma(\wh{f})=[f]$. By Calculation Lemma \ref{lem:calc}, we now have $\wh{W}_1(\wh{f})=\wh{W}_1(\wh{f'})$. Performing the analogous operation for the remaining three invariants $\wh{\K}_1$, $\wh{W}_2$, and $\wh{\K}_2$, we can achieve that $\wh{\WL}(\wh{f})=\wh{\WL}(\wh{f'})$. Then $[\wh{f}]$ and $[\wh{f'}]$ are as required.

\smallskip
\subsection{Proof of Linking Lemma \ref{lem:Linking}.}

To prove Linking Lemma \ref{lem:Linking} we shall need the following claim and lemma.

\begin{clm}
\label{clm:xy}
Let $[\wh{f}], [\wh{f'}]\in \wh{E}^6(\wh{M_1}\sqcup\wh{M_2})$ be isotopy classes. Then there are embeddings $g_1:S^3\rightarrow {\rm Int}D^6_+$, $g_2:S^3\rightarrow {\rm Int}D^6_+$, and $g:S^3\sqcup S^3\rightarrow {\rm Int}D^6_+$ such that
\begin{itemize}
\item isotopy classes $[g_1]$ and $[g_2]$ are trivial,
\item images of $g_1$ and $g_2$ are pairwise disjoint and disjoint with the image of $\wh{f}$,
\item image of $g$ lie in a $6$-ball disjoint with the images of $\wh{f}$, $g_1$, and $g_2$,
\item $[\wh{f}\#_1g_1\#_2g_2]\#[g]=[\wh{f'}]$.
\end{itemize}
In the special case $\wh{\WL}(f)=\wh{\WL}(f')$ we may choose $g$ so that a simpler equation
$$[\wh{f}]\#[g]=[\wh{f'}]$$
holds.
\end{clm}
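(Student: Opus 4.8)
The plan is to first use two linked connected sums with well-chosen \emph{trivial} knots $g_1,g_2$ to arrange that $\wh f\#_1 g_1\#_2 g_2$ has the same four Whitney invariants as $\wh f'$, and then to produce $g$ by combining Haefliger's Theorem \ref{thm:H} with Bijectivity Lemma \ref{lem:bij}.

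\emph{Step 1: matching the Whitney invariants.} The key observation is that a single $\#_1 g_1$ with a trivial knot can change $\wh W_1$ and $\wh\K_2$ by arbitrary prescribed classes while leaving $\wh W_2$ and $\wh\K_1$ unchanged. Namely, given $\delta_1\in H_1(\wh M_1)$ and $\delta_2\in H_1(\wh M_2)$ I would build, just as in Claim \ref{clm:wsurj}, an embedded $4$-disk near $\wh f(\wh M_1)$ meeting $\wh f(\wh M_1)$ in a framed circle representing $\delta_1$, and independently such a disk near $\wh f(\wh M_2)$ for $\delta_2$; since $\wh f(\wh M_1)$ and $\wh f(\wh M_2)$ are disjoint, these two disks can be taken disjoint and then joined by a thin tube in $D^6_+\setminus\wh f(\wh M_1\sqcup\wh M_2)$ into a single embedded $4$-disk $\Delta$. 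Being a thickening of a $2$-complex, $\Delta$ is an unknotted $4$-ball, so $g_1(S^3):=\partial\Delta$ is a trivial knot disjoint from $\wh f$. Dragging $g_1(S^3)$ along $\Delta$ is a homotopy from $\wh f\#_1 g_1$ to $\wh f$; bookkeeping its double points exactly as in Calculation Lemma \ref{lem:calc} (the preimages lying in the ``dragged'' region are null-homologous) gives $\wh W_1(\wh f\#_1 g_1)-\wh W_1(\wh f)=\delta_1$, $\wh\K_2(\wh f\#_1 g_1)-\wh\K_2(\wh f)=\delta_2$, and no change in $\wh W_2,\wh\K_1$. Symmetrically, $\#_2 g_2$ prescribes the changes of $\wh W_2$ and $\wh\K_1$ and fixes $\wh W_1,\wh\K_2$. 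Choosing the four prescribed classes to be the coordinates of $\wh\WL(\wh f')-\wh\WL(\wh f)$, and taking $g_1,g_2$ with disjoint images disjoint from $\wh f$, I get $\wh\WL(\wh f\#_1 g_1\#_2 g_2)=\wh\WL(\wh f')$. In the special case $\wh\WL(\wh f)=\wh\WL(\wh f')$ this step is vacuous: take $g_1,g_2$ to be trivial knots in tiny disjoint balls, so that $[\wh f\#_1 g_1\#_2 g_2]=[\wh f]$.

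\emph{Step 2: producing $g$.} Set $\wh f_\ast:=\wh f\#_1 g_1\#_2 g_2$, so $\wh\WL(\wh f_\ast)=\wh\WL(\wh f')=:x$. Since $E^6(S^3\sqcup S^3)$ is a group (Theorem \ref{thm:H}), there is $[g]\in E^6(S^3\sqcup S^3)$ with $\sigma_R([\wh f_\ast])\#[g]=\sigma_R([\wh f'])$, and we may represent $g$ by an embedding into a small $6$-ball of ${\rm Int}\,D^6_+$ disjoint from the images of $\wh f$, $g_1$ and $g_2$. By $\#$-commutativity (Claim \ref{clm:comm}), $\sigma_R([\wh f_\ast]\#[g])=\sigma_R([\wh f'])$. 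As $g$ lies in a ball disjoint from $\wh f_\ast(\wh M_1\sqcup\wh M_2)$, the contracting homotopy of $g$ has all its double points inside that ball, so $\wh\WL([\wh f_\ast]\#[g])=\wh\WL(\wh f_\ast)=x$. Thus $[\wh f_\ast]\#[g]$ and $[\wh f']$ both lie in $\wh\WL^{-1}(x)$ and have equal $\sigma_R$-images; by Bijectivity Lemma \ref{lem:bij} they coincide, which is the asserted equation $[\wh f\#_1 g_1\#_2 g_2]\#[g]=[\wh f']$. Running the same argument directly on $\wh f$ yields the simpler equation $[\wh f]\#[g]=[\wh f']$ in the special case.

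\emph{Main difficulty.} I expect the main obstacle to be Step 1: realizing, with a \emph{single} linked connected sum, a prescribed simultaneous change of one $\wh W$ and one $\wh\K$ of different components. This is exactly where one uses that $\wh f(\wh M_1)$ and $\wh f(\wh M_2)$ are disjoint inside $D^6_+$ (to assemble the $4$-disk from independent local pieces) together with the careful double-point accounting from Calculation Lemma \ref{lem:calc}. Everything after that is a formal consequence of Claim \ref{clm:comm}, Theorem \ref{thm:H}, and Bijectivity Lemma \ref{lem:bij}.
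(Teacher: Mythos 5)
Your proof is correct and follows essentially the same route as the paper: the paper disposes of the general case by adjusting the Whitney invariants with $g_1,g_2$ ``analogously to the proof of part (I) of Theorem \ref{thm:main}'' and then reduces to the special case, which it proves ``analogously to part (II)'' via Haefliger's Theorem \ref{thm:H}, $\#$-commutativity (Claim \ref{clm:comm}), and Bijectivity Lemma \ref{lem:bij} — exactly your Steps 1 and 2. Your only genuine addition is spelling out how to merge the two Seifert-disk constructions of Claim \ref{clm:wsurj} into a single trivial knot $g_1$ so that one $\#_1$ changes $\wh{W}_1$ and $\wh{\K}_2$ simultaneously, a detail the paper leaves implicit; this is handled correctly.
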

\begin{proof}
The special case of the claim is proved analogously to part {\bf (II)} of Theorem \ref{thm:main}.
Consider the general case. Analogously to the proof of part {\bf (I)} of Theorem \ref{thm:main} we may choose $g_1,g_2:S^3\rightarrow {\rm Int}D^6_+$ so that $\wh{\WL}(\wh{f}\#_1g_1\#_2g_2)=\wh{\WL}(f')$. Now apply the special case of the claim to isotopy classes $[\wh{f}\#_1g_1\#_2g_2]$ and $[f']$.
\end{proof}

\begin{lem}
\label{lem:Linking'}
For any $k\in\{1,2\}$, $1\leq i\leq m_k$, and $[\wh{f}],[\wh{f'}]\in \wh{E}^6(\wh{M_1}\sqcup\wh{M_2})$ the following equality holds
$$\x_{k,i}(\wh{f'})-\x_{k,i}(\wh{f})=2{\rm lk}(\p_{k,i},\wh{W}_k(\wh{f'})-\wh{W}_k(\wh{f})).$$
\end{lem}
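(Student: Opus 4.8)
The plan is to reduce the general statement to the additive contributions of single linked connected sums, which have already been computed in Calculation Lemma \ref{lem:calc}, and then to exploit Claim \ref{clm:xy} to connect an arbitrary pair $[\wh f]$, $[\wh f']$ by such elementary moves together with a ``far away'' spherical piece $[g]$ that affects neither $\x_{k,i}$ nor $\wh W_k$. Concretely, by Claim \ref{clm:xy} there are trivial knots $g_1,g_2$ in disjoint balls (disjoint from $\mathrm{Im}\,\wh f$) and an embedding $g:S^3\sqcup S^3\to \mathrm{Int}\,D^6_+$ in a ball disjoint from everything, with $[\wh f\#_1 g_1\#_2 g_2]\#[g]=[\wh f']$. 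So I would first argue that passing from $\wh f$ to $\wh f\#_1 g_1\#_2 g_2$ changes neither side of the claimed identity (the $g_i$ are trivial knots in balls, hence contribute $0$ to both $\x_{k,i}$ by Lemma \ref{lem:Lnew}/the vanishing of $\lambda$ into a ball, and $0$ to $\wh W_k$ by a homotopy shrinking them along disks disjoint from the images), and that the subsequent $\#[g]$ also changes neither side (for $\x_{k,i}$ this is again a linking-into-a-disjoint-ball argument as in the cells $(*,\lambda_\bullet)$ with one term $0$, and for $\wh W_k$ because $g$ lies in a ball disjoint from $\wh f(\wh M_k)$). Hence it suffices to prove the identity when $[\wh f']=[\wh f]\#[g']$ for an arbitrary $[g']\in E^6(S^3\sqcup S^3)$.

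Next I would note that $E^6(S^3\sqcup S^3)$ is generated, under $\#$, by elements realizing the four basic invariants $\li,\lii,r_1,r_2$, and that the action $\#$ on $\wh E^6(\wh M_1\sqcup\wh M_2)$ is by a single connected-sum operation whose effect on each of $\x_{k,i}$ and $\wh W_k$ is additive. So it is enough to check the identity separately when $[g']$ is supported on $r_1$, on $r_2$, on $\li$, or on $\lii$ (together with the parity-forced companion). For a generator supported on $r_k$ (a Haefliger knot in a ball attached componentwise to the $k$-th sphere), the connected sum is a componentwise $\#$ that does not touch $\wh M_j$ at all, so $\wh W_k$ is unchanged; on the other hand $\x_{k,i}=\lx(\omega_{k,i},(\sigma_R\wh f)_k)$ is a linking coefficient into the complement, and connect-summing the $k$-th sphere with a knot in a disjoint ball does not change its homotopy class in that complement (by Lemma \ref{lem:Lass}, the extra term is $\lx(\omega_{k,i},\text{ball knot})=0$), so both sides stay equal. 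For a generator supported on the linking coefficients one can work with the model $\wh f\#_j\omega_{k,i}$-type moves: I would run exactly the bookkeeping of Calculation Lemma \ref{lem:calc}, cells of type $(1,\lambda_1)$–$(2,\lambda_2)$ and $(*,\wh W_k)$, to see that a change of $\wh W_k(\wh f)$ by a class $a$ is accompanied by a change of $\x_{k,i}(\wh f)$ by exactly $2\,\lk(\p_{k,i},a)$; the coefficient $2$ and the Hopf-invariant vanishing $h(\p_{k,i})=0$ (Claim \ref{clm:untwist}) are precisely what make the $h(b\sqcup w)=h(b)+h(w)+2\lk(b,w)$ computation collapse to the linking term.

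The main obstacle I anticipate is the bookkeeping of framings and signs: $\x_{k,i}$ is defined via a Hopf invariant of a \emph{framed} $1$-submanifold obtained as a framed preimage, so the claimed equality really is an identity of Hopf invariants, and I must make sure that the reference disk $\Delta_{\omega,k,i}$ of Claim \ref{clm:C} can be chosen uniformly as $\wh f$ varies within an isotopy (so that the comparison $\x_{k,i}(\wh f')-\x_{k,i}(\wh f)$ is literally computed on a \emph{fixed} disk), and that the change in $\wh f^{-1}(\wh f\wh M_k\cap\Delta)$ across the concordance given by $\#[g']$ is, as a framed class, exactly the Whitney-invariant difference $\wh W_k(\wh f')-\wh W_k(\wh f)$. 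Once that identification is in place, the Hopf-invariant additivity formula turns the right-hand side into $2\lk$ of $\p_{k,i}$ with that difference, which is the assertion. I expect the cleanest route is to first prove the special case $\wh\WL(\wh f)=\wh\WL(\wh f')$ of Claim \ref{clm:xy} reduces everything to $\wh W_k(\wh f')=\wh W_k(\wh f)$, where both sides vanish trivially, and then handle the general case by the additive decomposition above, quoting Calculation Lemma \ref{lem:calc} for each elementary move rather than recomputing.
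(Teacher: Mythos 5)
There is a genuine gap, and it is at the very first reduction. You claim that passing from $\wh f$ to $\wh f\#_1 g_1\#_2 g_2$ (the linked embedded connected sums from Claim \ref{clm:xy}) ``changes neither side of the claimed identity'' because the $g_i$ are trivial knots with images disjoint from ${\rm Im}(\wh f)$. This is false: the $g_i$ are attached by tubes via $\#_1,\#_2$ and live in ${\rm Int}\,D^6_+$, where they can link the spheres $\omega_{k,i}$ and where their Seifert disks can meet $\wh f(\wh M_k)$ in an arbitrary homology class. Indeed, the whole purpose of $g_1,g_2$ in Claim \ref{clm:xy} is to adjust $\wh{\WL}$ so that $\wh{\WL}(\wh f\#_1g_1\#_2g_2)=\wh{\WL}(\wh f')$ --- so $\#_k g_k$ changes $\wh W_k$ by exactly the (arbitrary) difference $\wh W_k(\wh f')-\wh W_k(\wh f)$, and it changes $\x_{k,i}$ correspondingly. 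Your reduction would leave only the case $[\wh f']=[\wh f]\#[g']$, in which $\wh{\WL}$ is unchanged and $G'$ sits in a far-away ball, so \emph{both} sides of the identity are zero; the lemma would then be vacuous, contradicting the fact that its right-hand side is generally nonzero. The step you discard is precisely where all the content lies: the paper computes $\x_{k,i}(\wh f')-\x_{k,i}(\wh f)=\lx(F,\omega\# G)-\lx(F,\omega)-\lx(F,G)$ by applying Lemma \ref{lem:conn} twice to $r(\omega\# F\# G)$, and then evaluates this as $h(w\sqcup d)-h(w)-h(d)=2\lk(w,d)$ via Lemma \ref{lem:Lnew}, where $d=F^{-1}(F(S^3)\cap\Delta_G)$ represents exactly $\wh W_k(\wh f')-\wh W_k(\wh f)$.

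Your fallback --- decomposing the change into $\wh f\#_j\omega_{k,i}$-type moves and quoting Calculation Lemma \ref{lem:calc} --- does not repair this. Two arbitrary classes $[\wh f],[\wh f']\in\wh E^6(\wh M_1\sqcup\wh M_2)$ are not related by $\omega$-moves plus a componentwise $\#[g']$: the moves $\#_k\omega_{k,i}$ change $\wh W_k$ only by elements of the subgroup of $H_1(\wh M_k)$ generated by the $[\p_{k,i}]$, whereas $\wh W_k(\wh f')-\wh W_k(\wh f)$ can be any class. You therefore need the computation for a \emph{general} trivial knot $G$ attached by a linked connected sum, with a general Seifert disk $\Delta_G$ meeting $F(S^3)$ in an arbitrary framed $1$-manifold $d$; the Hopf-invariant identity $h(w\sqcup d)=h(w)+h(d)+2\lk(w,d)$ together with $w=\p_{k,i}$ (Claim \ref{clm:C}) then gives the factor $2\lk(\p_{k,i},\cdot)$. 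Your closing paragraph correctly identifies this mechanism, but the argument as structured never reaches a situation where it applies.
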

\begin{proof}
Let $g_1:S^3\rightarrow {\rm Int}D^6_+$, $g_2:S^3\rightarrow {\rm Int}D^6_+$, and $g:S^3\sqcup S^3\rightarrow {\rm Int}D^6_+$ be embeddings as in the statement of Claim \ref{clm:xy}. 
Denote 
\begin{itemize}
\item by $F$, $F'$, and $G'$ the restrictions of $\sigma_R(\wh{f})$, $\sigma_R(\wh{f'})$, and $g$ to the $k$-th component, respectively,
\item $G:=g_k$,
\item $\omega:=\omega_{k,i}$.
\end{itemize}
By Claim \ref{clm:xy} we have $[F']=[F\#G\#G']$. The isotopy between $F'$ and $F\#G\#G'$ is fixed on $\omega(S^3)\subset D^6_-$ so without a loss of generality we may assume that $F'=F\#G\#G'$.

By the definition of $\x_{k,i}$ we have
\begin{equation}
\label{eq1:lem:Linking1}
\x_{k,i}(\wh{f'})-\x_{k,i}(\wh{f})= \lx(\omega,F\#G\#G')-\lx(\omega,F) = \lx(\omega,F\#G)-\lx(\omega,F)
\end{equation}
where the last equality holds because the image of $G'$ lies in a $6$-ball in $D^6_+$ disjoint from the images of $\omega$, $F$, and $G$.

Let us compute $\lx(\omega, F\#G)$. Next two equalities follow from Lemma \ref{lem:conn}
$$2r(\omega\#F\#G) = \lx(\omega, F\#G) + \lx(F\#G, \omega) + 2r(\omega) + 2r(F\#G),$$
$$2r(\omega\#F\#G) = \lx(F,\omega\#G)+\lx(\omega\#G,F)+2r(\omega\#G)+2r(F).$$
We get
$$\lx(\omega, F\#G) = \lx(F,\omega\#G)+\lx(\omega\#G,F)+2r(\omega\#G)+2r(F)-\lx(F\#G, \omega) - 2r(\omega) - 2r(F\#G).$$
Clearly, $\omega$ is trivial so $r(\omega)=0$. Also, $G$ is trivial by Claim \ref{clm:xy}. Moreover, image of $\omega$ is in the boundary of $D^6_+$ while the image of $G$ is in the interior of $D^6_+$. So, $r(\omega\#G)=0$. Now we can simplify the formula for $\lx(\omega, F\#G)$ above to get
$$\lx(\omega, F\#G) = \lx(F,\omega\#G)+\lx(\omega\#G,F)+2r(F)-\lx(F\#G, \omega) - 2r(F\#G).$$
By Lemma \ref{lem:conn}, we have $2r(F\#G)=2r(F)+2r(G)+\lx(F,G)+\lx(G,F)=2r(F)+\lx(F,G)+\lx(G,F)$. So
$$\lx(\omega, F\#G) = \lx(F,\omega\#G)+\lx(\omega\#G,F)-\lx(F\#G, \omega) - \lx(F,G)-\lx(G,F).$$
By Lemma \ref{lem:Lass}, we have $\lx(\omega\#G,F)=\lx(\omega,F)+\lx(G,F)$ and $\lx(F\#G, \omega)=\lx(F, \omega)+\lx(G, \omega)=\lx(F, \omega)$, where the last equality holds because the images of $\omega$ and $G$ lie in disjoint $6$-balls meaning that $\lx(G, \omega)=0$. So
$$\lx(\omega, F\#G) = \lx(F,\omega\#G)+\lx(\omega,F)-\lx(F, \omega) - \lx(F,G).$$
Going back to equation (\ref{eq1:lem:Linking1}) we get
$$\x_{k,i}(\wh{f'})-\x_{k,i}(\wh{f})=\lx(F,\omega\#G)-\lx(F, \omega) - \lx(F,G).$$

Let $\Delta_G\subset{\rm Int}D^6_+$ be an embedded framed disk bounded by $G(S^3)$.
Denote 
$$d:=F^{-1}(F(S^3)\cap\Delta_G)$$
and
$$w:=F^{-1}(F(S^3)\cap\Delta_{\omega,k,i}).$$

By Lemma \ref{lem:Lnew} we have
\begin{itemize}
\item $\lx(F,\omega\#G)=h(w\sqcup d)$,
\item $\lx(F, \omega)=h(w),$
\item $\lx(F,G)=h(d).$
\end{itemize}

So
\begin{multline*}
\x_{k,i}(\wh{f'})-\x_{k,i}(\wh{f})= h(w\sqcup d) - h(w) - h(d) = \\
= h(w)+h(d)+2{\rm lk}(w,d)-h(w)-h(d) = 2{\rm lk}(w,d) = 2{\rm lk}(\p_{k,i},\wh{W}_k(\wh{f'})-\wh{W}_k(\wh{f})).
\end{multline*}

The last equation holds because
\begin{itemize}
\item $w=\p_{k,i}$ by Claim \ref{clm:C},
\item $d\subset {\rm Int}M_k$ is a representative of the homology class $\wh{W}_k(\wh{f'})-\wh{W}_k(\wh{f})\in H_1(\wh{M_k})$ by the definition of $\wh{W}_k$.
\end{itemize}  
\end{proof}

\begin{proof}[Proof of Linking Lemma \ref{lem:Linking}]
By Lemma \ref{lem:Linking'}, it is enough to prove the lemma in the special case $\wh{f}=\wh{f}^0$. I.e., we need to prove that
$$\overset{m_k}{\underset{i=1}{\Sigma}}a_i[\p_{k,i}]=0 \text{\quad}\Rightarrow\text{\quad} \overset{m_k}{\underset{i=1}{\Sigma}}a_i\x_{k,i}(\wh{f}^0) = \overset{m_k}{\underset{i=1}{\Sigma}}2\lk(\p_{k,i}, \wh{W}_k(\wh{f}^0)).$$
The righthand side is zero because $\wh{W}_k(\wh{f}^0)=0$ by definition. Therefore we need to prove that
$$\overset{m_k}{\underset{i=1}{\Sigma}}a_i[\p_{k,i}]=0 \text{\quad}\Rightarrow\text{\quad} \overset{m_k}{\underset{i=1}{\Sigma}}a_i\x_{k,i}(\wh{f}^0) = 0.$$

Consider the embedding $\wh{f'}:=\wh{f}^0\overset{m_k}{\underset{i=1}{\#_k}}a_i\omega_{k,i}$.

We have that  
$$\wh{W}_k(\wh{f'})=\wh{W}_k(\wh{f'})-\wh{W}_k(\wh{f}^0)=\overset{m_k}{\underset{i=1}{\Sigma}}a_i[\p_{k,i}]=0,$$
where the first equation holds because $\wh{W}_k(\wh{f}^0)=0$ and the second equation holds by Calculation Lemma \ref{lem:calc}. Also by Calculation Lemma \ref{lem:calc}, we get that the rest of the Whitney invariants of $\wh{f'}$ and $\wh{f}^0$ are also the same, namely $\wh{WL}(\wh{f'})=\wh{WL}(\wh{f}^0)=0$. 

By Claim \ref{clm:xy} (the ``special case''), there is an embedding $g:S^3\sqcup S^3\rightarrow S^6$ such that 
\begin{equation}
\label{eq:Linking_1}
[\wh{f}^0]\#[g]=[\wh{f'}].
\end{equation}
On one hand, from the commutativity of the action $\#$ (Claim \ref{clm:comm}) we get
$$r_k(\sigma_R \wh{f}')-r_k(\sigma_R\wh{f}^0)=r_k(g).$$
On the other hand, by Calculation Lemma \ref{lem:calc}, we get
$$r_k(\sigma_R \wh{f}')-r_k(\sigma_R\wh{f}^0)=\overset{m_k}{\underset{i=1}{\Sigma}}a_i\frac{\x_{k,i}(\wh{f}^0)}{2}.$$
So
$$\overset{m_k}{\underset{i=1}{\Sigma}}a_i\frac{\x_{k,i}(\wh{f}^0)}{2}=r_k(g).$$
It remains to prove that $r_k(g)=0$.

By the commutativity of the action $\#$, we get from (\ref{eq:Linking_1}) that
$$\sigma([\wh{f}'])=\sigma([\wh{f}^0])\#[g].$$
On the other hand, by Preimage Lemma \ref{lem:pre}, we have
$$\sigma([\wh{f}'])=\sigma([\wh{f}^0]).$$
Therefore, 
$$\sigma([\wh{f}^0])\#[g]=\sigma([\wh{f}^0]).$$

Consider the restriction of $\sigma([\wh{f}^0])$ to $M_k$. Its Whitney invariant $W$ is equal to $W_k(\sigma \wh{f}^0)=W_k({f^0})=0$. So $r_k(g)=0$ by Theorem \ref{thm:Sk}, part {\bf (III)}.
\end{proof}

\bigskip

\end{document}